\numberwithin{equation}{section}
\g@addto@macro\th@plain{\thm@headpunct{}}
\numberwithin{equation}{section}
\newtheorem{thm}{Theorem}[section]
\newtheorem*{thm*}{Theorem}
\newtheorem{lemma}[thm]{Lemma}
\newtheorem{Corollary}[thm]{Corollary}
\theoremstyle{definition}
\newtheorem{defin}[thm]{Definition}
\newtheorem{remark}[thm]{Remark}
\newtheorem{example}[thm]{Example}
\newcommand{\R}{\mathbb{R}}
\renewcommand{\P}{\mathbb{P}}
\newcommand{\N}{\mathbb{N}}
\newcommand{\Q}{\mathrm{Q}_{\Mast}}
\newcommand{\E}{\mathbb{E}}
\newcommand{\I}{\mathrm{I}}
\newcommand{\cH}{\eta}
\newcommand{\norm}{\,\mathrm{pen}}
\newcommand{\normd}{\,\mathrm{pen}^\ast}
\newcommand{\norms}{\,\mathrm{pen}_{\mathcal{M}^\ast}}
\newcommand{\normsd}{\,\mathrm{pen}_{\mathcal{M}^\ast}^\ast}
\newcommand{\poly}[1]{\left\|#1\right\|_\diamond}
\newcommand{\polyd}[1]{\left\|#1\right\|^\ast_\diamond}
\newcommand{\polydr}[2]{\left\|#1\right\|^\ast_{\diamond\mid #2}}
\newcommand{\proj}{\mathrm{\mathbf{P}}_{\mathcal{M}^\ast}}
\newcommand{\PM}{\mathrm{P}_{\mathcal{M}^\ast}}
\newcommand{\PP}{\mathrm{P}}
\newcommand{\Sast}{\mathcal{S}^\ast}
\newcommand{\Mast}{\mathcal{M}^\ast}
\renewcommand{\SS}{\mathcal{S}}
\newcommand{\cnorm}[1]{[\mkern-1.5mu[#1]\mkern-1.5mu]}
\DeclareMathOperator*{\argmin}{\arg\min}
\DeclareMathOperator*{\argmax}{\arg\max}
\newcommand{\tr}[1]{\mathrm{tr}\left(#1\right)}
\renewcommand{\v}[1]{\mathrm{vec}(#1)}
\newcommand{\vp}[1]{\mathrm{vec_+}(#1)}
\newcommand{\patt}[1]{\mathrm{patt}_{\diamond}\!\left(#1\right)}
\newcommand{\scalar}[2]{\left\langle\,#1\,\middle|\,#2\,\right\rangle}
\newcommand{\opnorm}{\@ifstar\@opnorms\@opnorm}
\newcommand{\@opnorms}[1]{%
	\left|\mkern-1.5mu\left|\mkern-1.5mu\left|
	#1
	\right|\mkern-1.5mu\right|\mkern-1.5mu\right|
}
\newcommand{\@opnorm}[2][]{%
	\mathopen{#1|\mkern-1.5mu#1|\mkern-1.5mu#1|}
	#2
	\mathclose{#1|\mkern-1.5mu#1|\mkern-1.5mu#1|}
}
\newcommand{\copnorm}{\@ifstar\@copnorms\@copnorm}
\newcommand{\@copnorms}[1]{%
\left|\mkern-1.5mu\left|\mkern-1.5mu\left|
	#1
\right|\mkern-1.5mu\right|\mkern-1.5mu\right|
}
\newcommand{\@copnorm}[2][]{%
	\mathopen{#1[\mkern-1.5mu#1[\mkern-1.5mu#1[}
	#2
	\mathclose{#1]\mkern-1.5mu#1]\mkern-1.5mu#1]}
}
\title[From Graphical Lasso to Atomic Norms]{From Graphical Lasso to Atomic Norms: High-Dimensional Pattern Recovery}
\author[P. Graczyk]{Piotr Graczyk}
\email{piotr.graczyk@univ-angers.fr}
\address{Univ Angers, CNRS, LAREMA, SFR MATHSTIC, F-49000 Angers, France}
\address{Faculty of Pure and Applied Mathematics, Wrocław University of Science and Technology, Wybrzeże Wyspiańskiego 27, 50-370 Wroc\l{}aw, Poland}
\author[B. Kołodziejek]{Bartosz Kołodziejek \orcidlink{0000-0002-5220-9012}}
\email{bartosz.kolodziejek@pw.edu.pl}
\address{Faculty of Mathematics and Information Sciences, Warsaw University of Technology, Koszykowa 75, \mbox{00-662} Warsaw, Poland}
\author[H. Nakashima]{Hideto Nakashima}
\email{h-nakashima@tokai.ac.jp}
\address{Department of Mathematics, Faculty of Science, Tokai University, 4-1-1, Kitakaname, Hiratsuka, Kanagawa, 259-1292, Japan}
\author[M. Wilczyński]{Maciej Wilczyński}
\email{maciej.wilczynski@pwr.edu.pl}
\address{Faculty of Pure and Applied Mathematics, Wrocław University of Science and Technology, Wybrzeże Wyspiańskiego 27, 50-370 Wroc\l{}aw, Poland}
\thanks{For the purpose of Open Access, the authors have applied a CC-BY public copyright licence to any Author Accepted Manuscript (AAM) version arising from this submission.
}
\begin{document}

\begin{abstract}
Estimating high-dimensional precision matrices is a fundamental problem in modern statistics, with the graphical lasso and its $\ell_1$-penalty being a standard approach for recovering sparsity patterns. However, many statistical models, e.g. colored graphical models, exhibit richer structures like symmetry or equality constraints, which the $\ell_1$-norm cannot adequately capture. This paper addresses the gap by extending the high-dimensional analysis of pattern recovery to a general class of atomic norm penalties, particularly those whose unit balls are polytopes, where patterns correspond to the polytope's facial structure. We establish theoretical guarantees for recovering the true pattern induced by these general atomic norms in precision matrix estimation. 

Our framework builds upon and refines the primal-dual witness methodology of Ravikumar et al. (2011). Our analysis provides conditions on the deviation between sample and true covariance matrices for successful pattern recovery, given a novel, generalized irrepresentability condition applicable to any atomic norm. When specialized to the $\ell_1$-penalty, our results offer improved conditions---including weaker deviation requirements and a less restrictive irrepresentability condition---leading to tighter bounds and better asymptotic performance than prior work. The proposed general irrepresentability condition, based on a new thresholding concept, provides a unified perspective on model selection consistency. Numerical examples demonstrate the tightness of the derived theoretical bounds.
\end{abstract}

\subjclass[2020]{Primary 62F12; secondary 62H22}

\maketitle
\smallskip
\noindent \textbf{Keywords.} Precision matrix, pattern, sparsity, colored graphical models, regularization, atomic norms

\section{Introduction}

Estimating high-dimensional precision matrices is a central problem in modern statistics. Graphical models provide a powerful framework for uncovering conditional-independence structures.  \citet{Ravi2008, ravikumar2011graphical} made seminal contributions by analyzing the graphical LASSO (GLASSO) estimator \citep{YuanLi07, glasso3, Friedman2008}, focusing on its ability to recover both the support and the sign pattern of the true precision matrix. The GLASSO employs a standard $\ell_1$-penalty and is tailored to exploit simple sparsity.

Many statistical models, however---most notably colored graphical models---exhibit richer structure, such as symmetry or equality constraints among precision-matrix entries. Colored Gaussian graphical models, introduced by \citet{HL08}, combine two complementary forms of parsimony: sparsity induced by conditional independence zeros and symmetry imposed via equality constraints on entries. Recovering these structures requires penalties beyond the $\ell_1$-norm.

To capture more elaborate patterns, alternative regularizers have been proposed. The $\ell_\infty$-norm, for example, can isolate a cluster of entries with the largest magnitudes \citep{ellinfty}. The SLOPE/OWL norm \citep{Negrinho, bogdan2015slope, figueiredo16} offers a more sophisticated approach, interpolating between $\ell_1$ and $\ell_\infty$ properties. Its induced patterns can include sparsity (support), multiple clusters of equal-valued entries, and hierarchical relationships between these clusters \citep{schneider2020geometry,7aut}. Consequently, penalties like SLOPE appear well-suited for recovering the patterns inherent in colored graphical models. Despite this potential, theoretical guarantees of pattern recovery under such norms in the context of precision matrix estimation, particularly extending the high-dimensional framework of \citet{Meinshausen2006,Rothman08, Sparsistency,ravikumar2011graphical}, remain scarce.

A powerful conceptual framework for designing such penalties comes from the theory of atomic norms \citep{Chandra12}, which is based on the idea that many simple models can be expressed as a combination of a few elementary `atoms' from a predefined set. The corresponding atomic norm, whose unit ball is the convex hull of this atomic set, then serves as a natural regularizer. This general approach provides a unified way to convert notions of simplicity into convex penalty functions, leading to convex optimization solutions for recovering structured models.

In this paper, we focus on the subclass of atomic norms whose unit ball is a polytope.
The patterns we aim to recover correspond precisely to the facial structure of the dual of this polytope, or equivalently to the structure of the subdifferential of the norm at the solution \citep{POLYNORMS}.
\subsection{Problem Setup}
Let $X=(X_1,\ldots,X_p)^\top$ be a zero-mean $p$-dimensional random vector with finite second moments. Define the true covariance matrix and the precision matrix by
\[
\Sigma^\ast=\E[X X^\top]\quad\mbox{and}\quad K^\ast=(\Sigma^\ast)^{-1}.
\]
Throughout the paper, we assume that $\Sigma^\ast$ is positive definite.

Let $(X^{(i)})_{i=1}^n$ be a sequence of i.i.d. copies of $X$, and define the sample covariance matrix as
\[
\hat\Sigma = \frac{1}{n} \sum_{i=1}^n X^{(i)} (X^{(i)})^\top.
\]
We consider estimators for $K^\ast$ of the form
\begin{align*}
	\hat K = \argmin_{K\in \mathrm{Sym}_+(p)} \left\{ \tr{\hat{\Sigma}K}-\log\det(K) + \lambda \norm(K)\right\},
\end{align*}
where $\norm(K)$ is a penalty term, and $\lambda \geq 0$ is a regularization parameter. 
We assume that 
\[
\norm(K) = 2 \poly{\vp{K}},
\]
where $\vp{K}\in\R^{p(p-1)/2}$ is a vectorization of strictly lower triangular entries of $K$ (excluding diagonal) and  $\poly{\cdot}$ is an atomic norm on $\R^{p(p-1)/2}$.

We explicitly do not penalize the diagonal entries of the precision matrix. The concept of a pattern corresponding to a given atomic norm is formally defined following \citep{POLYNORMS} in relation to its subdifferential.

The objective function is the negative log-likelihood of a Gaussian distribution (up to constants). Moreover, one can circumvent the Gaussianity assumption via the Bregman divergence associated with the log-determinant function. Specifically, if $f(K) = -\log\det(K)$, its Bregman divergence is
\[
D_f(A\mid\mid B) = -\log\det(A)+\log\det(B)+\tr{B^{-1}(A-B)}.
\]
The expression $\tr{\hat{\Sigma}K}-\log\det(K)$ is equivalent to $D_f(K^\ast \mid\mid K) + \tr{(\hat{\Sigma}-\Sigma^\ast)K} - \log\det(K^\ast) + p$, when $K$ is invertible. Minimizing this with respect to $K$, with $\Sigma^\ast$ replaced by $\hat{\Sigma}$, leads to the unpenalized Bregman estimator \citep{ravikumar2011graphical, zwiernik2023entropic}.

The GLASSO uses this Gaussian negative log-likelihood with an $\ell_1$ penalty on the off-diagonal elements
\[
\norm(K) = 2 \|\vp{K}\|_1 = \sum_{i\neq j}|K_{ij}|.
\] 

 This framework,  based on minimizing  penalized  Bregman divergence, makes it applicable to random vectors with only finite second moments, not strictly requiring Gaussianity. The support of the estimator $\hat{K}$ defines the estimate of the partial correlation graph. The true partial correlation graph, under Gaussianity, fully describes the conditional independence structure of the random vector. More generally, lack of an edge between vertices $i$ and $j$ implies that $X_i$ and $X_j$ are conditionally uncorrelated given all other variables. If the underlying distribution is elliptical (or more generally, transelliptical), then a much stronger property can be read from zero partial correlation \citep{RosselZwiernik21}.

\subsection{Examples of Atomic Norms}
Atomic norms provide a versatile family of regularizers. Notable examples include:
\begin{itemize}
	\item   $\ell_1$-norm: induces element-wise sparsity. 
	\item   $\ell_\infty$-norm: encourages clustering of the largest (in magnitude) coefficients.
	\item  Group LASSO with $\ell_1-\ell_\infty$ mixed norm:  $\poly{x} = \sum_{g \in G} \|x_g\|_{\infty}$: selects or deselects entire predefined groups   \cite{mixednorms,YuanLi07, Grlasso3}.
	\item Generalized LASSO: $\|Dx\|_1$ where $D$ is a specified matrix \cite{Tib11}, including:
\begin{itemize}
	\item Fused LASSO:  $\poly{x}= \sum_{i=1}^m |x_i-x_{i+1}|$ \cite{tibshirani2005sparsity} or 
	  \item its graph-guided variants \cite{GFLASSO}: $\poly{x}= \sum_{i\sim_G j} |x_i-x_{j}|$:  encourages piecewise constant structures (``fusion'' of coefficients),	see also Fused graphical LASSO \cite{fusedglasso, fusedMglasso}.  \end{itemize}
     Such penalties have been explored in the context of colored graphical models by \cite{GaoMassam15}.
	\item  SLOPE/OWL norms:  \cite{bogdan2015slope, figueiredo16} and its special version OSCAR \cite{OSCAR08}:
\[
\poly{x} =  \sum_{j=1}^m w_j |x|_{(j)}, 
\]
where $(|x|_{(j)})_j$ are sorted absolute values of $x\in\R^m$ and $(w_j)_j$ are  nonincreasing non-negative weights with $w_1>0$: can simultaneously encourage sparsity and clustering of coefficients of similar magnitude,  leading to hierarchical patterns. A graphical version appears in \cite{Mazza20,graphSLOPE}.
	\item GOLAZO penalty:
	\[
	\norm(K) = \sum_{i\neq j} \max\{L_{i,j} K_{i,j}, U_{i,j} K_{i,j}\},
	\]
	where $L$ and $U$ are symmetric matrices with entries in $\R\cup\{-\infty,\infty\}$ with the restriction  $L_{i,j}\leq 0\leq U_{i,j}$ for $i\neq j$ and $L_{i,i}=U_{i,i}=0$ for all $i$. $\norm(K)$ is convex and positively homogeneous. It enables sparse estimates of $K$ that take into account the signs of $K$, e.g., asymmetric GLASSO
    or positive GLASSO 
    \cite{GOLAZO}.
    \item \cite{LSWG21} designed the following penalty in the context of model selection within colored graphical models
    \[
    \norm(K)  = \lambda_1 \sum_{i<i'} \| K_{i,i}-K_{i',i'}\| + \lambda_2\sum_{(i,j)}\| K_{i,j}\|+\lambda_3 \sum_{(i,j),(i',j')} \|K_{i,j}-K_{i',j'}\|.
    \]
\end{itemize}
See also \cite{Chandra12} for more examples of atomic norms.

\subsection{Contribution of the paper}
This paper extends the high-dimensional analysis of pattern recovery for precision matrix estimation to a general class of atomic norm penalties. Our main contributions are:

\emph{Generalization of pattern recovery guarantees:} We establish theoretical guarantees for the recovery of the true pattern induced by general atomic norms. This significantly generalizes the results of  \cite{Ravi2008} and \cite{ravikumar2011graphical}, which focused
on the $\ell_1$ penalty. More precisely, for an arbitrary norm $\|\cdot\|$, under the irrepresentability condition and for a given tuning parameter $\lambda$, we determine a threshold $\delta$ such that  if $\| \hat{\Sigma}-\Sigma^\ast\|\leq \delta$, then $\hat{K}$ recovers the true pattern. 

While our proofs build upon the foundational ideas of \cite{ravikumar2011graphical} (like primal-dual witness method), they incorporate key modifications. A notable change is the definition of the function used to control the fixed point of the KKT conditions, leading to tighter asymptotic bounds.  Furthermore, our analysis carefully tracks the constants, and our numerical examples demonstrate that the derived theoretical bounds are quite sharp in practice, see Section \ref{sec:num}.
Although we do not state explicit sample size requirements; they can be easily found using general theory, see Section \ref{sec:FST}.

Our novel approach can be applied to other results based on \cite{ravikumar2011graphical}, e.g., \cite{FUN_GLASSO}.

\emph{Improved conditions for the $\ell_1$-penalty:} When specialized to the $\ell_1$ penalty, our results offer improvements over \cite{ravikumar2011graphical}. Specifically, we establish pattern recovery under weaker conditions on the deviation between the sample and true covariance matrices ($\|\hat{\Sigma} - \Sigma^\ast\|$) and a less restrictive irrepresentability condition. We stress that our bounds not only have
tighter numerical constants, but in general yield improved asymptotic behavior. Indeed, we consider an example for which theory of \cite{ravikumar2011graphical} requires a bound on deviation $\| \hat{\Sigma}-\Sigma^\ast\|$ which is asymptotically a factor of $p^3$ more restrictive than our bound. 

\emph{Irrepresentability condition for atomic norms:} We introduce an  irrepresentability condition applicable to any atomic norm, based on a novel thresholding concept ($\tau_\diamond$). This framework provides a unified perspective on model selection consistency conditions and can be readily extended to other settings, such as linear regression.

\subsection{Structure of the paper}

The paper is structured as follows. 

Section \ref{sec:preli} introduces essential concepts including the definition of atomic norms  and their facial structure used in this work, the precise notion of a pattern in relation to the subdifferential of these norms, pattern subspaces.  We then define the key theoretical constructs of the first and second thresholds which are central to our analysis---$\tau_\diamond$ controlling perturbations orthogonal to a pattern face (Section \ref{sec:tau})  and $\zeta_\diamond$  measuring pattern‐stability (Section \ref{sec:zeta}).

Section \ref{sec:main} details our primary theoretical contributions. We present general theorems (Theorems \ref{thm:patt_recov} and \ref{cor:main}) that establish pattern recovery guarantees for precision matrix estimators penalized by a broad class of atomic norms. We sketch how these bounds translate into finite‐sample requirements on the sample size via standard covariance‐concentration results in Section \ref{sec:FST}.
We then specialize these general results to the $\ell_1$-penalty in Theorem \ref{thm:glasso}, demonstrating improved conditions compared to existing literature. We point out a small correction in their original sample‐size bound in \cite{ravikumar2011graphical}, and compare asymptotic rates for a special example. 

In Section \ref{sec:num}, we illustrate our bounds on several graph topologies (chain, hub, grid, dense) for various atomic norms ($\ell_1$, $\ell_\infty$ and SLOPE), compare the admissible deviation thresholds $\delta$ to those from \cite{ravikumar2011graphical}, and empirically demonstrate tightness of the pattern‐recovery guarantee.

Section \ref{sec:ProofSkeleton} describes the methodology for proving our main theorems. We give a high‐level outline of the primal–dual witness argument, the key residual-control lemmas, and the Brouwer‐fixed‐point construction that yields our main theorems.

The Appendices supplement the main text with detailed proofs for all results, which were omitted from the main text for brevity detailed (Appendix \ref{app:proofs}), an exposition on dual gauges (Appendix \ref{app:dg}), an analysis of scenarios where the $\tau_\diamond$ threshold fails to exist (Appendix \ref{app:pr}), and practical considerations for selecting tuning parameters in SLOPE norms (Appendix \ref{app:slope}). Appendix \ref{app:Maha} discusses application of our framework using the Mahalanobis norm.

\subsection*{Acknowledgments}
The authors are grateful to Ma{\l}gorzata Bogdan for her inspiration in initiating this research and to Piotr Zwiernik for his guidance in identifying pertinent references on atomic norms.

This research was funded in part by National Science Centre, Poland, UMO-2022/45\\/B/ST1/00545.
 PG is grateful for financial support from: IEA CNRS 2024-2025 program ``Graphical models and applications'', 
the French government ``Investissements d’Avenir'' program integrated to France 2030 under the reference ANR-11-LABX-0020-01 and the project RAWABRANCH (ANR-23-CE40-0008). 
PG and HN were supported by 
JSPS KAKENHI Grant Numbers JP24KK0059.

\section{Preliminaries}\label{sec:preli}

\subsection{Atomic norms}\label{sec:atomic}

Let $\mathcal{A}\subset \R^m$ be a finite set whose points are called atoms. Assume that $0\in\mathrm{int}(\mathrm{conv}(\mathcal{A}))$. The gauge of $\mathcal{A}$ is defined as
\[
\|x\|_{\mathcal{A}} = \inf\{t>0\colon x\in t\,\mathrm{conv}(\mathcal{A})\}.
\]
When $\mathcal{A}$ is centrally symmetric about the origin (i.e., $a\in\mathcal{A}\iff-a\in\mathcal{A}$), $\|\cdot \|_{\mathcal{A}}$ is a norm, called the atomic norm.
We note that an atomic norm is a special case of a Minkowski functional.
Following the convention in \cite{Chandra12}, we will refer to $\|\cdot\|_{\mathcal{A}}$ as an atomic norm even when it is a gauge that does not satisfy the symmetry property, and is therefore not strictly a norm. We note that \cite{Chandra12} considered a more general case where $\mathcal{A}$ was allowed to be a general compact set. 
We note that the standard concept of a duality extends naturally to general gauges. In principle, it is clear that the dual to an atomic norm is also an atomic norm. 

Since from the perspective of patterns, the dual representation often plays a more direct role, we will further build on concepts developed by \cite{POLYNORMS}. Note that in \cite{POLYNORMS}, such functions are generally called polyhedral gauges.
As the set $\mathcal{A}$ will be fixed, 
throughout the paper we will denote the atomic norm $\|\cdot\|_{\mathcal{A}}$ as $\poly{\cdot}$.

Summarizing, we will assume that $\poly{\cdot}$ is a non-negative, positively homogeneous, convex function that vanishes at $0$, and its unit ball $B = \{x\in\R^m\colon \poly{x}\leq 1 \}$ is a polytope. A convex polytope is a compact set that can be described as the intersection of a finite number of half-spaces, each defined by a linear inequality.
Thus, any atomic norm $\poly{\cdot}$ on $\R^m$ can also be expressed in the form
\begin{equation}\label{eq:atomicnorm}
\poly{x} = \max\{v_1^\top x,\ldots,v_K^\top x\}
\end{equation}
for some vectors $v_1,\ldots, v_K\in\R^m$. Without loss of generality, we assume that no $v_i$ is redundant, meaning that for each $i$, there exists some $x$ such that $\max_{j\neq i}\{v_j^\top x\}\neq \poly{x}$. We note that $\poly{\cdot}$ is a norm if for every $i$, there exists $j$ such that $v_j=-v_i$.

\begin{example}\label{ex:SLOPE}
 For $w_1\geq w_2\geq\ldots \geq  w_m\geq 0$ with $w_1>0$, define the SLOPE norm on $\R^m$ and its dual by
 \[
	\|x\|_w = \sum_{i=1}^{m} w_i |x|_{(i)}
	\quad\mbox{and}\quad 
	\| y\|_{w}^\ast = \max_{i=1,\ldots,m}\left\{\frac{ \sum_{k=1}^i |y|_{(k)}}{\sum_{k=1}^i w_k}\right\},
	\]
    where $|z|_{(k)}$ is the $k$-th largest absolute value of the coordinates of $z$, $k=1,\ldots,m$.

Clearly, $\|\cdot\|_w$ is an atomic norm. Special cases of the SLOPE norm include the $\ell_1$ and $\ell_\infty$-norms.
\end{example}

\subsection{Notion of pattern}\label{sec:pattern}

We say that two vectors $x, y\in\R^m$ have the same pattern with respect to the atomic norm $\poly{\cdot}$ if
\[
\partial \poly{x} = \partial \poly{y}.   
\]
In this case, we write $\patt{x}=\patt{y}$. Let $C_x$ denote the pattern equivalence class of $x$. 
The pattern subspace generated by $x\in \R^m$ (referred to as the model subspace in \cite{Vaiter15,Vaiter18}) is defined as the linear span of $C_x$ and is denoted by $\SS_x$.

By definition and the fact that subdifferentials are faces of $B^\ast$,
there is a one-to-one correspondence between patterns and faces of $B^\ast$. 
Let us see this correspondence in more detail.

It is well known (see, e.g., \cite[Exercise 8.31]{Rockafellar}) that if a function $f$ is given by 
\[
f=\max\{f_1,\ldots,f_K\},
\]
where each $f_i$ is a smooth function, then the subdifferential of $f$ at a point $x$ is 
\[
\partial f(x) = \mathrm{conv}\{ \nabla f_i(x)\colon i\in\I_x\},
\]
where $\I_x = \{i\in\{1,\ldots,K\}\colon f_i(x)=f(x)\}$ is called the active set of indices. Applying this to the atomic norm
of the form \eqref{eq:atomicnorm}, we obtain
\[
 \partial\poly{x} = \mathrm{conv}\{v_i\colon i\in\I_x\}, 
\]
where the active set of indices is
\[
\I_x =\{ i\in\{1,\ldots,K\}\colon  v_i^\top x = \poly{x}\}. 
\]
We see that $\partial\poly{x}$ is a face of $B^\ast$, and write $F_x=\partial\poly{x}$.
In particular, we have 
\[
B^\ast=\partial\poly{0}= \mathrm{conv}\{v_1,\ldots,v_K\}. 
\]
Thus, for any $x\in \R^m$, 
its pattern equivalence class $C_x$ is in one-to-one correspondence with 
a face of $B^\ast$, and hence with its active set of indices $\I_x$.  
The set of all patterns is then $\mathcal{I} = \{ \I_x\colon x\in\R^m\}\subset 2^K$.
When $\I=\I_x$, we write 
\[
F_x=F_{\I},\quad C_x = C_{\I}\quad\mbox{and}\quad \SS_x = \SS_{\I}.
\]

By \cite[Theorem 3.2]{POLYNORMS}, $C_{\I}$ equals the relative interior of the normal cone $N_{F}$ of the face $F=F_{\I}$
of the dual unit ball $B^\ast$. 
It is known from convex analysis that the relative interiors of the normal cones of the faces form a partition ${\mathcal P}$ of $\R^m$ associated with the convex polytope $B^\ast$:
\[ 
\R^m=\bigcup_{F \mbox{ face of } B^\ast}\!\!\mathrm{ri}(N_{F}).  
\]
Assigning a pattern to $x\in \R^m$ corresponds to specifying which cone in the partition  ${\mathcal P}$ the vector $x$ belongs to.


For $\I\in\mathcal{I}$,  the pattern subspace is given by
\begin{align*}
\SS_{\I} = \mathrm{span}\{y\in\R^m\colon \I_y=\I\}= \{ y\in\R^m\colon v_i^\top y = v_j^\top y
\mbox{ for all }i,j\in \I\}.
\end{align*}
In other words, $\SS_{\I}$ is a linear subspace orthogonal to the vectors spanning the face $F_{\I}$.
If $|\I|= 1$, then $\SS_{\I} = \R^m$. 
Otherwise, 
$\SS_{\I} = \ker(H_{\I}^\top)$,
where, for a fixed $i_0\in \I$,  the matrix $H_{\I}$ is given by
\[
H_{\I} = (v_i-v_{i_0})_{i\in\I\setminus\{i_0\}}. 
\]
Thus, the orthogonal projection onto $\SS_{\I}$ is given by 
\[
\PP_{\I} =  I_m - H_{\I} (H_{\I}^\top H_{\I})^+ H_{\I}^\top.
\]
By definition, we have $\PP_{\I}^\top = \PP_{\I}$.
Notably,  for all $i,j\in \I$,
\[
\PP_{\I} v_i = \PP_{\I} v_j.
\]
Since the subdifferential of the atomic norm satisfies $F_{\I}= \mathrm{conv}\{v_i\colon i\in \I\}$, and because $\PP_{\I} v_i =\PP_{\I} v_{i_0}$ for a fixed $i_0\in \I$, it follows that
\[
\PP_{\I} \,\mathrm{conv}\{v_i\colon i\in\I\} = \{ \PP_{\I} v_{i_0}\}.
\]
\begin{defin}\label{def:f}
We say that a point $f_{\I}:=\PP_{\I} v_{i_0}$, $i_0\in\I$, is the face projection of the face $F_{\I}$. 
\end{defin}

The condition $f_\I\in \mathrm{ri}(F_\I)$ will be crucial for our argument regarding pattern recovery to hold, see Lemma \ref{lem:tau_positive}.

Note that in general $f_\I$ may not belong to $F_\I=\mathrm{conv}\{v_i\colon i\in \I\}$.  The next subsection analyses this issue in detail.

\begin{example}[$\ell_1$-norm]\label{ex:l1}
Consider the $\ell_1$-norm $\|\cdot\|_1$ on $\R^m$. We note that $\poly{\cdot}=\|\cdot\|_1$ holds for $\{v_1,\ldots,v_K\}=\{-1,1\}^m$; in particular $K=2^m$.
Then, the dual ball is $B^\ast=[-1,1]^m$. 

The active set  $\I_x$ (after the identification of indices with their corresponding vectors, $i\leftrightarrow v_i$), for $x\neq 0$, equals 
\[
\I_x = \{v\in\{-1,1\}^m\colon x_j\neq 0 \implies v_j=\mathrm{sign}(x_j)\mbox{ for }j\in\{1,\ldots,m\}\}.
\]
Each face of $B^\ast$ can be written as 
\[
F_{x} =\left\{ y\in[-1,1]^m\colon x_j\neq0\implies y_j = \mathrm{sign}(x_j)\mbox{ for }j\in\{1,\ldots,m\}\right\}.
\]
Further, we have 
\[
C_x = \{y\in\R^m\colon \mathrm{sign}(x_j)=\mathrm{sign}(y_j)\mbox{ for }j\in\{1,\ldots,m\}\},
\]
where we use the convention that $\mathrm{sign}(0)=0$. 
Then, the corresponding pattern subspace is given by 
\begin{align*}
\mathcal{S}_{x} &=\{y\in\R^m\colon x_j=0 \implies y_j=0\mbox{ for }j\in\{1,\ldots,m\}\}.
\end{align*}
The orthogonal projection $\PP_{\I_x}$ onto $\SS_{x}$ is a diagonal matrix with $(\PP_{\I_x})_{jj} = 1$ if $x_j\neq 0$ and $(\PP_{\I_x})_{jj} = 0$ otherwise. 

Finally, it is easy to see that if $\I_x=\I$, then we have $\PP_{\I} F_x = \{f_{\I}\}$, where the face projection is given by
\[
f_{\I}  = (\mathrm{sign}(x_j)\colon j\in\{1,\ldots,m\}) = \mathrm{sign}(x).
\]
\end{example}

\begin{example}[$\ell_\infty$-norm]\label{ex:linfty}
Consider the $\ell_\infty$-norm on $\R^m$, $\|x\|_\infty = \max_{i=1,\ldots,m} |x_i|$. 
This is an atomic norm $\poly{x} = \|x\|_\infty$ with the set of vectors $\{v_i\}_{i=1}^K = \{e_1, -e_1, \ldots, e_m, -e_m\}$, so $K=2m$, where $\{e_1,\dots,e_m\}$ is the standard basis of $\R^m$.

The dual norm is $\|y\|_\infty^* = \|y\|_1$, and the dual unit ball is $B^* = \mathrm{conv}\{ \pm e_1, \ldots, \pm e_m\}$.

For $x \neq 0$, let $J_x = \{ j \in \{1, \ldots, m\} \colon |x_j| = \|x\|_\infty \}$ be the set of indices where the maximum absolute value is attained. 
The active set of vectors is $\I_x$ corresponding to $\{ \mathrm{sign}(x_j) e_j \colon  j \in J_x \}$.

The active set $\I_x$ can be characterized using $J_x$. Let $M=\|x\|_\infty$. An index $k$ is in $\I_x$ if either:
\begin{itemize}
	\item $k = 2j-1$ for some $j \in J_x$, and $v_{2j-1}^\top x = e_j^\top x = x_j = M$. This requires $x_j > 0$.
	\item $k = 2j$ for some $j \in J_x$, and $v_{2j}^\top x = (-e_j)^\top x = -x_j = M$. This requires $x_j < 0$.
\end{itemize}
So, $\I_x = \{ 2j-1 \mid j \in J_x, x_j = M \} \cup \{ 2j \mid j \in J_x, x_j = -M \}$.

The subdifferential $\partial \|x\|_\infty$, which is a face $F_x$ of $B^*$, is:
\[
F_x = \mathrm{conv}\{ \mathrm{sign}(x_j) e_j \colon j \in J_x \}.
\]
Let $s_x$ be the vector with $(s_x)_j = \mathrm{sign}(x_j)$ for $j \in J_x$ and $0$ otherwise. We have $|J_x| = \|s_x\|_1$. Let $S^c = \{1, \ldots, m\} \setminus J_x$.  The pattern subspace is $\SS_{\I_x} = \mathrm{span}(\{s_x\} \cup \{e_j \mid j \in S^c\})$.

The orthogonal projection $\PP_{\I_x}$ onto $\SS_{\I_x}$ is given by:
\[
(\PP_{\I_x} z)_k =
\begin{cases}
	\frac{ s_x^\top z }{\|s_x\|_1} \mathrm{sign}(x_k), & \mbox{if } k \in J_x, \\
	z_k, & \mbox{if } k \in S^c.
\end{cases}
\]

The face projection $f_{\I_x} = \PP_{\I_x} v_{i_0}$ is $f_{\I_x} = s_x/\|s_x\|_1$.
 \end{example}

\subsection{Restricted dual norm}\label{sec:restricted}

Fix non-empty $\I\in\mathcal{I}$. For $y\in \SS_{\I}$, define the restricted dual gauge as
\[
\polydr{y}{\I} = \sup_{\substack{x\in \SS_{\I}\\ \poly{x}\leq 1}} x^\top y.
\]
The general properties of dual and restricted dual gauges are presented in Appendix \ref{app:dg}.

\begin{lemma}\label{lem:restr_ball}
	Let $B_{\I}^\ast$ be the unit ball in $\polydr{\cdot}{\I}$. Then,
\[   
B_{\I}^\ast:= \{ y\in\SS_{\I}\colon \polydr{y}{\I}\leq 1\}  = \mathrm{conv}\{\PP_{\I} v_1,\ldots, \PP_{\I} v_K\}. 
\]
\end{lemma}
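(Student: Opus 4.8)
The plan is to establish the two inclusions separately: ``$\supseteq$'' is a one-line verification, while ``$\subseteq$'' is obtained by separating a point from the polytope $\mathrm{conv}\{\PP_{\I}v_1,\dots,\PP_{\I}v_K\}$ \emph{inside the subspace $\SS_{\I}$}. The single identity that drives everything is
\[
v_k^\top x \;=\; (\PP_{\I}v_k)^\top x\qquad\text{for every } x\in\SS_{\I}\text{ and every }k,
\]
which follows at once from $\PP_{\I}^\top=\PP_{\I}$ and $\PP_{\I}x=x$ on $\SS_{\I}$: it lets us trade each atom $v_k$ for its face projection $\PP_{\I}v_k$ whenever it is paired with a vector of $\SS_{\I}$. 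Conceptually this is the statement that $\polydr{\cdot}{\I}$, being the support function of $\SS_{\I}\cap B=\{x\in\SS_{\I}\colon(\PP_{\I}v_k)^\top x\le1,\ k=1,\dots,K\}$, is the gauge of the polar of that set taken relative to $\SS_{\I}$, so the bipolar theorem in $\SS_{\I}$ gives the claim; I would spell this out concretely rather than quote it.

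For ``$\supseteq$'' I would check directly: if $y=\sum_k\lambda_k\PP_{\I}v_k$ with $\lambda_k\ge0$, $\sum_k\lambda_k=1$, then $y\in\mathrm{range}(\PP_{\I})=\SS_{\I}$, and for every admissible $x$ (i.e. $x\in\SS_{\I}$ with $\poly{x}\le1$) the identity above gives $x^\top y=\sum_k\lambda_k\,v_k^\top x\le\max_k v_k^\top x=\poly{x}\le1$, hence $\polydr{y}{\I}\le1$ and $y\in B_{\I}^\ast$.

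For ``$\subseteq$'' I would argue by contraposition. Suppose $y\in\SS_{\I}$ lies outside $\mathrm{conv}\{\PP_{\I}v_1,\dots,\PP_{\I}v_K\}$; that polytope is compact, convex, and contained in $\SS_{\I}$, so separating $y$ from it \emph{within $\SS_{\I}$} produces $x\in\SS_{\I}$ and $c\in\R$ with $(\PP_{\I}v_k)^\top x\le c<x^\top y$ for all $k$. By the identity this reads $\poly{x}=\max_k v_k^\top x\le c$. If $c>0$ then $x/c\in\SS_{\I}$ is admissible and $(x/c)^\top y>1$, so $\polydr{y}{\I}>1$. If $c\le0$ then $\poly{x}\le0$; since $B$ is a polytope (hence $0\in\mathrm{int}(B^\ast)$ and $\poly{\cdot}$ is strictly positive on $\R^m\setminus\{0\}$) this forces $x=0$, and then $0=(\PP_{\I}v_k)^\top x\le c<x^\top y=0$, a contradiction. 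Either way $y\notin B_{\I}^\ast$, so $B_{\I}^\ast\subseteq\mathrm{conv}\{\PP_{\I}v_1,\dots,\PP_{\I}v_K\}$.

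\textbf{Where the work is.} There is no deep obstacle, only bookkeeping. The point to watch is that the separation must be carried out in $\SS_{\I}$ with its inherited inner product: a hyperplane separating $y$ from the polytope in $\R^m$ need not restrict to one whose normal lies in $\SS_{\I}$, whereas keeping $x\in\SS_{\I}$ is exactly what puts $x/c$ into the feasible set of the supremum defining $\polydr{\cdot}{\I}$. A second point is the use of positive definiteness of $\poly{\cdot}$ — a consequence of $B$ being a polytope, not of symmetry — to discard the case $c\le0$; and one should note that the degenerate case $|\I|=1$, where $\PP_{\I}=I_m$ and the assertion collapses to $B^\ast=\mathrm{conv}\{v_1,\dots,v_K\}$, is covered automatically. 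If preferred, the general facts on restricted dual gauges in Appendix~\ref{app:dg} can be invoked to shorten the write-up.
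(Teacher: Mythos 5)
Your proof is correct and rests on the same key identity the paper uses, namely $v_k^\top x=(\PP_{\I}v_k)^\top x$ for $x\in\SS_{\I}$ via $\PP_{\I}^\top=\PP_{\I}$, which converts the restricted dual gauge into the dual of the gauge generated by the projected atoms. The only difference is that the paper then simply invokes ``the standard polytope-duality argument,'' whereas you unpack that step into an explicit two-inclusion separation argument (including the correct handling of the degenerate case $c\le 0$ via positive definiteness of the gauge); this is the same route, just written out in full.
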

It is clear that if $\I\neq\emptyset$, then the face projection $f_\I$ belongs to the boundary of the unit restricted dual norm ball so that 
$\polydr{f_\I}{\I}=1$. 
For any $y\in\SS_{\I}$, we have $\polyd{y} \geq \polydr{y}{\I}$, which implies that $\polyd{f_\I}\geq 1$ and, it is possible that $\polyd{f_\I}>1$ as we show in the example below.

\begin{example}\label{ex:simple}
The following example studies a simple atomic norm for $m=2$ and $K=4$. Suppose that 
\[
(v_1,\ldots,v_K) = \left((\beta,1-\alpha)^\top,
(\beta,-1-\alpha)^\top,
(-\beta,-1+\alpha)^\top,(-\beta,1+\alpha)^\top\right),
\]
if $\alpha\in\R$ and $\beta\neq 0$, then $\poly{x}$ defines a norm on $\R^2$. 
Note that these are the vertices of the dual norm ball $B^\ast$,
while the vertices of the norm ball $B$ are $\{\pm\frac1\beta(1,0),\,\pm\frac1\beta(\alpha,\beta)\}$.

The non-trivial active sets are $\I\in \{\{1,2\}, \{2,3\}, \{3,4\},\{1,4\}\}$. Denote $d_{i,j}=v_i-v_j$. We have $\SS_{\{i,j\}} = \{y\in\R^2\colon d_{i,j}^\top y=0\}$. The orthogonal projector onto $\SS_{\{i,j\}}$ is given by 
\[
\PP_{\{i,j\}} = I_2 - d_{i,j}d_{i,j}^\top / \|d_{i,j}\|_2^2. 
\]
We have 
\begin{align*}
    f_\I = \begin{cases}
        (\beta,0)^\top, & \I=\{1,2\},\\
         -\frac{\beta}{\alpha^2+\beta^2} (\alpha,\beta)^\top, & \I=\{2,3\},\\
            (-\beta,0)^\top, & \I=\{3,4\},\\
         \frac{\beta}{\alpha^2+\beta^2} (\alpha,\beta)^\top, & \I=\{1,4\},
    \end{cases}
    \mbox{ and }
    \polyd{f_\I} = \begin{cases}
       \max\{|\alpha|,1\}, & \I\in\{ \{1,2\},\{3,4\}\},\\
         \displaystyle\max\left\{\frac{|\alpha|}{\alpha^2+\beta^2},1\right\}, & \I\in\{\{2,3\},\{1,4\}\}.
    \end{cases}
\end{align*}
Moreover,
\[
f_\I\in\mathrm{ri}(F_\I) \iff \begin{cases}
    \alpha\in(-1,1), & \I\in\{ \{1,2\},\{3,4\}\},\\
    \alpha^2+\beta^2>|\alpha|, & \I\in\{\{2,3\},\{1,4\}\}.
\end{cases}
\]
The unit restricted dual balls $B_{\I}^\ast$ are the line segments $[f_{\{1,2\}},f_{\{3,4\}}]$ and $[f_{\{2,3\}},f_{\{1,4\}}]$.
The dual balls with the face projections are depicted in Figure \ref{fig:ex}.

\newcommand{\drawDualAndShadows}[3]{%
  \begingroup
    \def\a{#2}\def\b{#3}%
    \pgfmathsetmacro{\Rx}{1.7}
    \pgfmathsetmacro{\Ry}{4.1}

    \pgfmathsetmacro{\maxx}{abs(\b)}
    \pgfmathsetmacro{\tA}{abs(1-\a)}
    \pgfmathsetmacro{\tB}{abs(-1-\a)}
    \pgfmathsetmacro{\tC}{abs(-1+\a)}
    \pgfmathsetmacro{\tD}{abs(1+\a)}
    \pgfmathsetmacro{\maxy}{max(\tA,\tB,\tC,\tD)}

    \pgfmathsetmacro{\s}{min(\Rx/\maxx,\Ry/\maxy)}

    \begin{tikzpicture}[scale={#1}]
      \path[use as bounding box]
        (-\Rx,-\Ry) rectangle (\Rx,\Ry);

      \draw[->] (-\Rx-0.3,0) -- (\Rx+0.5,0) node[below right] {};
      \draw[->] (0,-\Ry) -- (0,\Ry) node[left]        {};

      \coordinate (v1) at ({\s*\b},{\s*(1-\a)});
      \coordinate (v2) at ({\s*\b},{\s*(-1-\a)});
      \coordinate (v3) at ({-\s*\b},{\s*(-1+\a)});
      \coordinate (v4) at ({-\s*\b},{\s*(1+\a)});

      \draw[thick,black] (v1) -- (v2) -- (v3) -- (v4) -- cycle;

      \coordinate (c12) at ({\s*\b},0);
      \coordinate (c34) at ({-\s*\b},0);
      \pgfmathsetmacro{\den}{\a*\a + \b*\b}
      \coordinate (c23) at ({-\s*(\b*\a/\den)},{-\s*(\b*\b/\den)});
      \coordinate (c14) at ({ \s*(\b*\a/\den)},{ \s*(\b*\b/\den)});


      \foreach \pt/\lab/\pos in {
        c12/$f_{12}$/below right,%
        c23/$f_{23}$/below left,%
        c34/$f_{34}$/left,%
        c14/$f_{14}$/above right%
      }{
        \fill[red] (\pt) circle(2pt);
        \node[red,\pos] at (\pt) {\lab};
      }

      \draw[dashed, thick] ({1/\b},0) -- ({\a/\b},1)--({-1/\b},0)--({-\a/\b},-1)--cycle;
      \draw[dotted,blue, thick] ({-\Rx-0.3},{-(\Rx+0.3)*\b/\a})--({\Rx+0.3},{(\Rx+0.3)*\b/\a});
    \end{tikzpicture}
  \endgroup
}

\begin{figure}[htbp]
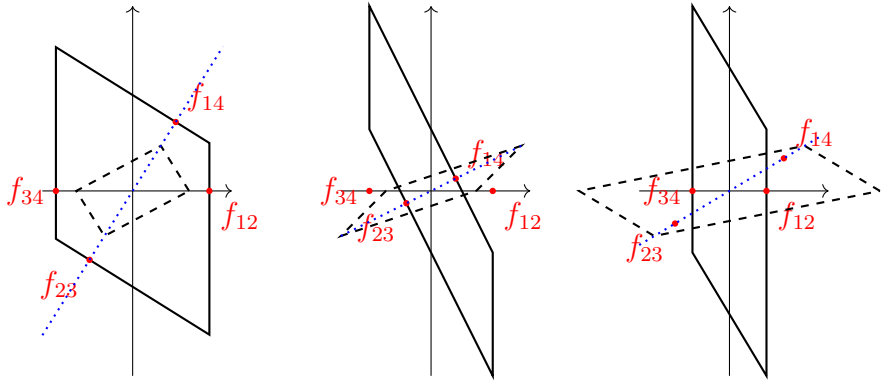

\centering
\begin{tabular}{c@{\qquad}c@{\qquad}c}
  \drawDualAndShadows{0.6}{0.5}{0.8} &
  \drawDualAndShadows{0.6}{2}{1} & 
  \drawDualAndShadows{0.6}{0.5}{0.3} 
\end{tabular}
  \caption{Solid parallelogram: the dual ball $B^\ast$, dashed parallelogram: the ball $B$, blue dotted line: $\SS_{\{1,4\}}=\SS_{\{2,3\}}$, red dots: face projections. Note that $\SS_{\{1,2\}}=\SS_{\{3,4\}}$ is the $x$-axis.
    Left: $(\alpha,\beta)=(0.5,0.8)$. 
    Middle: $(\alpha,\beta)=(2,1)$. 
    Right: $(\alpha,\beta)=(0.5,0.3)$. 
    The figures are rescaled.
  }\label{fig:ex}
  \end{figure}

\end{example}

\subsection{Vectorization}\label{sec:vectorization}
Let $p\in\N$. Let $m=p(p-1)/2$.  Let $\mathrm{Sym}(p)$ denote the space of symmetric real $p\times p$ matrices, and let  $\mathrm{Sym}_+(p)$  be the cone of positive definite matrices within $\mathrm{Sym}(p)$. We equip $\mathrm{Sym}(p)$ with the trace inner product, $\scalar{X}{Y}=\tr{XY}$. For  $X\in\mathrm{Sym}(p)$, define the vectorization operators:
\begin{itemize}
    \item $\v{X}\in\R^{p^2}$ obtained by stacking the columns of $X$ into a single column vector,
    \item $\vp{X}\in \R^{m}=\vp{\mathrm{Sym}(p)}$ obtained by stacking only the strict lower triangular entries  $(X_{ij})_{i> j}$, excluding diagonal elements. 
\end{itemize}
For $a,b,c\in \mathrm{Mat}(p\times p)$, we have 
\begin{align}\label{eq:vectorization}
\v{a b c}=(c^\top\otimes a)\v{b},
\end{align} 
where $\otimes$ denotes the Kronecker product.

Let $\mathrm{Sym}_0(p)$ be the zero-diagonal subspace of $\mathrm{Sym}(p)$. Let $D$ be the duplication matrix defined by 
	\[
	D\,\vp{x} = \v{x},\quad \forall\,x\in\mathrm{Sym}_0(p).
	\]

\subsection{First threshold}\label{sec:tau}
In proving our main result, we utilize the primal-dual witness method developed in \cite{wain09,ravikumar2011graphical}. We begin by considering a restricted version of the optimization problem in $\v{\mathrm{Sym}(p)}$ and constructing a vector $\pi\in\vp{\mathrm{Sym}(p)}=\R^m$ with $\PP_\I \pi = f_\I$; in particular $\polydr{\PP_{\I}\pi}{\I}=1$.

The key step in the method is then to show that $\polyd{\pi}\leq 1$. To achieve this, we control $\polyd{\pi}$, by bounding the term $\pi^\bot=(I_m-\PP_{\I})\pi$. 
To do so, we use an arbitrary norm $\cnorm{\cdot}$ on $\R^{p^2}$ and measure the distance in $\R^m$ via  $\cnorm{D\cdot}$, where $D$ is the duplication matrix defined in the previous subsection.

\begin{defin}\label{def:tau}
Let $\poly{\cdot}$ be an atomic norm and let $\I\neq\emptyset$ be the active set of indices. We define the threshold 
\begin{align*}
\tau_\diamond(\I)& =
\sup\{t\geq0\colon \forall\,\pi^\bot\in\SS_{\I}^\bot\,\,\,\, \cnorm{D\pi^\bot}\leq t\implies \polyd{f_{\I}+\pi^\bot}\leq 1\}.
\end{align*}
\end{defin}

The threshold \(\tau_\diamond(\I)\) can be interpreted as the maximal allowable shift of the face projection $f_\I$ along the subspace \(\SS_{\I}^\perp\) such that the shifted point stays in the same face of $B^\ast$, see Figure \ref{fig:tau}.

\begin{figure}[htbp]
    \centering
    \begin{tikzpicture}[decoration={brace, amplitude=5pt}]
        \draw[-latex] (-1.5,0) -- (1.5,0) node[right] {\tiny $\pi_1$};
        \draw[-latex] (0,-1.5) -- (0,1.5) node[above] {\tiny $\pi_2$};
        \draw (1,-0.5) -- (1,0.5) -- (0.5,1) -- (-0.5,1) -- (-1,0.5) -- (-1,-0.5) -- (-0.5,-1) -- (0.5,-1) -- cycle;
        \draw[thick] (-1.25,-1.25) -- (1.25,1.25);
        \draw[decorate, decoration={brace,mirror, amplitude=5pt}]
             (-1,-0.5) -- (-0.75,-0.75)
             node[midway, anchor=north east] {\tiny $\tau_\diamond(\I)$};
        \fill[red] (-0.75,-0.75) circle (2pt) node[right]{$f_I$};
        \draw[dashed] (0,-1.5) -- (-1.5,0) node[midway, xshift=3mm, yshift=-6mm] {\tiny $\SS_{\I}^\perp$};
            \end{tikzpicture}
            \caption{Consider a SLOPE norm with $w=(1,1/2)$ and a pattern $I=(-1,-1)$.
            The thick black line corresponds to $\mathcal{S}_\I = \R(1,1)^\top$.
            The red dot represents the face projection $f_\I$.}\label{fig:tau}
\end{figure}
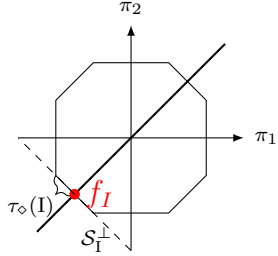

\begin{remark}\label{rem:tau_does_not_exist}
Recall that in general we have $1=\polydr{f_\I}{\I} \leq \polyd{f_\I}$.
The threshold $\tau_\diamond(\I)$ is well-defined if and only if the set $\{t\geq0\colon \forall\,\pi^\bot\in\SS_{\I}^\bot\,\,\,\, \cnorm{D\pi^\bot}\leq t\implies \polyd{f_{\I}+\pi^\bot}\leq 1\}$ is non-empty. Clearly, this set is non-empty if and only if $t=0$ is an element of this set if and only if $\polyd{f_\I}= 1$. 
\end{remark}

\begin{lemma}\label{lem:tau}
Assume that $\polyd{f_\I}=1$. Then,
\[
\tau_\diamond(\I)=  \inf_{\pi^\bot\in\SS_\I^\bot\colon \polyd{f_\I+\pi^\bot}> 1}\cnorm{D\pi^\bot}.
\]
\end{lemma}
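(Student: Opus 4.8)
The plan is to establish the identity by proving the two opposite inequalities between $\tau_\diamond(\I)=\sup T$, where $T$ denotes the set of admissible radii $t$ appearing in Definition \ref{def:tau}, and
\[
\rho:=\inf\{\cnorm{D\pi^\bot}\colon \pi^\bot\in\SS_\I^\bot,\ \polyd{f_\I+\pi^\bot}> 1\}
\]
(with the convention $\inf\emptyset=+\infty$). First I would dispose of the bookkeeping: $T$ is downward closed (so $T$ is an interval $[0,\sup T]$ or $[0,\sup T)$), and the standing hypothesis $\polyd{f_\I}=1$ guarantees $0\in T$, hence $T\neq\emptyset$ and $\sup T$ is meaningful---this is exactly the content of Remark \ref{rem:tau_does_not_exist}. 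The reason $0\in T$ is that $\cnorm{D\pi^\bot}\le 0$ forces $D\pi^\bot=0$, hence $\pi^\bot=0$ by injectivity of the duplication matrix $D$ (which has a left inverse), and then $\polyd{f_\I+\pi^\bot}=\polyd{f_\I}=1\le 1$.

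Next I would prove $\tau_\diamond(\I)\le\rho$. Fix any $t\in T$ and any $\pi^\bot\in\SS_\I^\bot$ with $\polyd{f_\I+\pi^\bot}>1$; contraposing the implication defining membership in $T$ gives $\cnorm{D\pi^\bot}>t$, so $t$ is a (strict) lower bound for the set whose infimum is $\rho$, whence $t\le\rho$. Taking the supremum over $t\in T$ yields $\tau_\diamond(\I)\le\rho$. For the reverse inequality, I would show that every $t$ with $0\le t<\rho$ lies in $T$: if not, there is $\pi^\bot\in\SS_\I^\bot$ with $\cnorm{D\pi^\bot}\le t$ and $\polyd{f_\I+\pi^\bot}>1$, but then $\cnorm{D\pi^\bot}$ belongs to the set whose infimum is $\rho$, forcing $\rho\le\cnorm{D\pi^\bot}\le t<\rho$---a contradiction. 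Hence $[0,\rho)\subset T$, so $\sup T\ge\rho$, and combining the two inequalities gives $\tau_\diamond(\I)=\rho$. The same contradiction argument also covers the degenerate case $\rho=+\infty$: then no ``bad'' $\pi^\bot$ exists, every $t\ge0$ lies in $T$, and both sides equal $+\infty$.

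I do not anticipate a genuine obstacle: the statement is a routine rewriting of ``the largest radius of a ball that stays inside'' as ``the smallest distance to a point outside.'' The only points I would be careful not to skip are (i) invoking injectivity of $D$ together with the hypothesis $\polyd{f_\I}=1$ to know $T\neq\emptyset$---without this hypothesis the two sides genuinely disagree, since if $\polyd{f_\I}>1$ then $T=\emptyset$ while the right-hand side equals $0$ by taking $\pi^\bot=0$---and (ii) handling the strict-versus-nonstrict inequalities correctly when passing between the condition ``$\cnorm{D\pi^\bot}\le t$'' appearing in $T$ and the strict condition ``$\polyd{f_\I+\pi^\bot}>1$'' appearing in $\rho$, which is why the contrapositive, rather than the direct implication, is the right tool in the first inequality.
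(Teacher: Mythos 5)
Your proposal is correct and follows essentially the same route as the paper: the inequality $\tau_\diamond(\I)\le\rho$ via contraposition of the defining implication, and the reverse inequality by exhibiting, for any $t$ below $\rho$ (the paper phrases this as a contradiction with $t=\tau_\diamond(\I)+\varepsilon/2$), the absence of a witness $\pi^\bot$ violating membership in the admissible set. Your care about $0\in T$ under the hypothesis $\polyd{f_\I}=1$ matches the paper's appeal to Remark \ref{rem:tau_does_not_exist}.
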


In general, it is possible that $\tau_\diamond(\I)=0$ for some active set $\I$ and the following result characterizes this condition. This happens exactly when $f_\I$ belongs to the boundary of $\mathrm{ri}(F_\I)$. In Example \ref{ex:simple}, such a case occurs exactly if $\alpha\in\{-1,1\}$ or $\alpha^2+\beta^2=|\alpha|$.

\begin{lemma}\label{lem:tau_positive}
\[
\tau_\diamond(\I)>0 \quad\iff\quad f_\I\in\mathrm{ri}(F_\I).
\]
\end{lemma}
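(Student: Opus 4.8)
\textbf{Proof plan for Lemma \ref{lem:tau_positive}.}

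The plan is to prove the two implications separately, using the characterization of $\tau_\diamond(\I)$ from Lemma \ref{lem:tau} together with the description of $F_\I$ as a face of the polytope $B^\ast$. First I would record the standing reduction: if $\polyd{f_\I}>1$, then by Remark \ref{rem:tau_does_not_exist} the defining set is empty, $\tau_\diamond(\I)$ is not well-defined, and separately $f_\I\notin\mathrm{ri}(F_\I)$ because $f_\I\notin F_\I\subset B^\ast$ (note $\polyd{\cdot}\le 1$ on $B^\ast$). So both sides of the claimed equivalence fail and there is nothing to prove; hence we may assume $\polyd{f_\I}=1$ throughout, and Lemma \ref{lem:tau} applies.

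For the implication $f_\I\in\mathrm{ri}(F_\I)\implies\tau_\diamond(\I)>0$: by Lemma \ref{lem:tau} it suffices to show that the infimum of $\cnorm{D\pi^\bot}$ over $\pi^\bot\in\SS_\I^\bot$ with $\polyd{f_\I+\pi^\bot}>1$ is strictly positive, equivalently that no sequence $\pi_n^\bot\in\SS_\I^\bot$ with $\cnorm{D\pi_n^\bot}\to 0$ can satisfy $\polyd{f_\I+\pi_n^\bot}>1$ for all $n$. The key geometric fact is that $f_\I$ is a relative interior point of the face $F=F_\I$, so there is a relatively open neighbourhood of $f_\I$ within $\mathrm{aff}(F)$ contained in $F$, hence in $B^\ast$, on which $\polyd{\cdot}\le 1$. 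The remaining work is to convert a perturbation in $\SS_\I^\bot$ into one that stays within $\mathrm{aff}(F)$: since $\SS_\I$ is orthogonal to the linear directions of $F$ (this is exactly the content of the formula $\SS_\I=\{y:v_i^\top y=v_j^\top y\ \forall i,j\in\I\}$, i.e. $\SS_\I^\bot=\mathrm{span}(F-f_\I)$), any small $\pi^\bot\in\SS_\I^\bot$ moves $f_\I$ along $\mathrm{aff}(F)$, and for $\cnorm{D\pi^\bot}$ small enough $f_\I+\pi^\bot$ lands in the relatively open piece of $F$, so $\polyd{f_\I+\pi^\bot}\le 1$. This contradicts $\polyd{f_\I+\pi_n^\bot}>1$ once $n$ is large, giving $\tau_\diamond(\I)>0$. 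The one point requiring care is that $\SS_\I^\bot$ really coincides with the linear span of $F-f_\I$ (so that "perturb in $\SS_\I^\bot$" and "stay in $\mathrm{aff}(F)$" are the same); this follows from the description of $\SS_\I$ and of $F_\I=\mathrm{conv}\{v_i:i\in\I\}$ together with $f_\I=\PP_\I v_{i_0}$, but it should be stated explicitly.

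For the converse $\tau_\diamond(\I)>0\implies f_\I\in\mathrm{ri}(F_\I)$, I would argue the contrapositive: suppose $\polyd{f_\I}=1$ but $f_\I\in F_\I\setminus\mathrm{ri}(F_\I)$ (the case $\polyd{f_\I}>1$ having been dispatched above, where $\tau_\diamond(\I)$ is undefined). Then $f_\I$ lies on the relative boundary of the face $F=F_\I$, so it belongs to a proper subface $F'\subsetneq F$, and there is a direction $u\in\mathrm{aff}(F)-f_\I$ (equivalently $u\in\SS_\I^\bot$, by the identification above) such that $f_\I+tu\notin F$ for every $t>0$. Since $F$ is a face of $B^\ast$ and $f_\I+tu$ is in $\mathrm{aff}(F)$ but not in $F$, for small $t>0$ the point $f_\I+tu$ is not in $B^\ast$ either — here I would use that a face of a polytope is the intersection of the polytope with a supporting hyperplane, so $\mathrm{aff}(F)\cap B^\ast\subset F$ in a neighbourhood of $F$; thus $\polyd{f_\I+tu}>1$ for all sufficiently small $t>0$. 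Taking $\pi^\bot=tu$ with $t\downarrow 0$ shows that the infimum in Lemma \ref{lem:tau} is $0$, i.e. $\tau_\diamond(\I)=0$, contradicting $\tau_\diamond(\I)>0$.

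I expect the main obstacle to be the careful handling of the affine-hull/neighbourhood argument: namely establishing cleanly that, near $f_\I$, membership of a point of $\mathrm{aff}(F_\I)$ in $B^\ast$ is equivalent to membership in $F_\I$ (so that stepping out of the face immediately leaves the dual ball), and that perturbations $\pi^\bot\in\SS_\I^\bot$ are precisely affine perturbations within $\mathrm{aff}(F_\I)$. Both are standard polytope facts — a face equals the polytope intersected with a supporting hyperplane, and small movements inside that hyperplane either stay in the face or exit the polytope — but they need to be invoked with the right local (neighbourhood) qualifier, and the translation dictionary $\SS_\I^\bot=\mathrm{span}(F_\I-f_\I)$ must be verified from the explicit formulas for $\SS_\I$ and $f_\I$ given earlier. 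The norm $\cnorm{D\cdot}$ on $\R^m$ plays no essential role beyond being a genuine norm (all norms on the finite-dimensional space $\SS_\I^\bot$ are equivalent), so I would remark on that once and otherwise work with whichever norm is convenient.
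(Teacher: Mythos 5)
Your proposal is correct and follows essentially the same route as the paper's own proof, which is itself only a brief geometric sketch: relative-interior points of $F_\I$ admit a tangential neighbourhood staying inside $F_\I\subset B^\ast$, while relative-boundary points can be pushed out of $F_\I$ (hence out of $B^\ast$) by arbitrarily small tangential steps. Your write-up is in fact more careful than the paper's, since you make explicit the identification $\SS_\I^\bot=\mathrm{aff}(F_\I)-f_\I$, the exposedness fact $\mathrm{aff}(F_\I)\cap B^\ast\subset F_\I$, and the degenerate case $\polyd{f_\I}>1$, all of which the paper leaves implicit.
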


We note that the condition $f_\I\in\mathrm{ri}(F_\I)$ implies $\polyd{f_\I}=1$.

\subsection{Second threshold}\label{sec:zeta}
The second threshold specifies a sufficient condition under which an element $x$ in the pattern subspace $\SS_y$ shares the same pattern as $y$, $\patt{x}=\patt{y}$. 
Concretely, it does so by imposing a bound on the distance between $x$ and $y$. 

In our setting---where $\R^m$ denotes the half-vectorization of symmetric matrices---we measure this distance using an arbitrary norm $\cnorm{\cdot}$ on $\R^{p^2}\supset\v{\mathrm{Sym}(p)}$. To streamline notation, we will write
$\patt{X}$ in place of $\patt{\vp{X}}$ for any $X\in\mathrm{Sym}(p)$. 

Let $\Mast$ denote the matrix-pattern subspace associated with the true concentration matrix $K^\ast$ (see Section \ref{sec:notation} for the formal definition) and $\Gamma^\ast = (K^\ast)^{-1}\otimes (K^\ast)^{-1}$.
\begin{defin}\label{def:zeta}
Define for $X\in\mathrm{Sym}(p)$, 
\begin{align*}
\zeta_\diamond(K^\ast) = 
\sup\{z>0 \colon \forall\,K \in\Mast,\quad \cnorm{\Gamma^\ast\v{K-K^\ast}} \le z \implies \patt{K}=\patt{K^\ast}\},
\end{align*}
where $\Gamma^\ast=\Sigma^\ast\otimes\Sigma^\ast$.
\end{defin}

We now show that this supremum can be characterized as the distance from $x$ to the complement of its pattern-equivalence class in $\SS_x$: 
\begin{lemma}\label{lem:zeta} We have
\begin{align*}
\zeta_\diamond(K^\ast) 
 = 
\inf_{\substack{K\in\Mast\\
      \patt{K}\neq\patt{K^\ast}}} \cnorm{\Gamma^\ast\v{K-K^\ast}}.
\end{align*}
Moreover, if $\cnorm{\cdot}=\|\cdot\|_\infty$,  $x^\ast=\vp{K^\ast}$ and $\SS^\ast=\SS_{x^\ast}$, then 
\begin{align*}
\zeta_\diamond(K^\ast) 
 \geq  \opnorm{K^\ast}_\infty^{-2}\!\!\!\inf_{\substack{x\in\SS^\ast\\
      \patt{x}\neq\patt{x^\ast}}} \!\!\! \|x-x^\ast\|_\infty.
\end{align*}
\end{lemma}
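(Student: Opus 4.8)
The plan is to prove the equality first and then the inequality. For the equality, I would show two inclusions between the defining set of the supremum and the interval determined by the infimum on the right-hand side. Fix $K^\ast$ and write $D^\ast := \inf\{\cnorm{\Gamma^\ast\v{K-K^\ast}} \colon K\in\Mast,\ \patt{K}\neq\patt{K^\ast}\}$. If $z < D^\ast$, then for every $K\in\Mast$ with $\cnorm{\Gamma^\ast\v{K-K^\ast}}\le z$ we cannot have $\patt{K}\neq\patt{K^\ast}$ (that would force $\cnorm{\Gamma^\ast\v{K-K^\ast}}\ge D^\ast > z$), hence $\patt{K}=\patt{K^\ast}$; so $z$ is admissible and $\zeta_\diamond(K^\ast)\ge D^\ast$. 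Conversely, if $z > D^\ast$ then by definition of the infimum there is a $K\in\Mast$ with $\patt{K}\neq\patt{K^\ast}$ and $\cnorm{\Gamma^\ast\v{K-K^\ast}} < z$, so $z$ is not admissible; hence $\zeta_\diamond(K^\ast)\le D^\ast$. Combining gives $\zeta_\diamond(K^\ast)=D^\ast$. The only subtlety is boundary behaviour at $z=D^\ast$ itself (whether the infimum is attained), but since $\zeta_\diamond$ is defined as a supremum over a set that is either $[0,D^\ast)$ or $[0,D^\ast]$, both cases give supremum $D^\ast$, so the argument is clean.

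For the inequality, assume $\cnorm{\cdot}=\|\cdot\|_\infty$, set $x^\ast=\vp{K^\ast}$ and $\SS^\ast=\SS_{x^\ast}$. The key observation is that the map $K\mapsto \vp{K}$ identifies $\Mast$ with $\SS^\ast$ (this is how $\Mast$ is defined via the vectorization of the pattern subspace of $K^\ast$, per Section \ref{sec:notation}), and under this identification $\patt{K}=\patt{K^\ast}$ corresponds to $\patt{x}=\patt{x^\ast}$ where $x=\vp{K}$. So I would rewrite the infimum in the equality part as an infimum over $x\in\SS^\ast$ with $\patt{x}\neq\patt{x^\ast}$ of $\|\Gamma^\ast\v{K-K^\ast}\|_\infty$, where $\v{K-K^\ast} = D(x-x^\ast)$ with $D$ the duplication matrix. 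The remaining task is to lower-bound $\|\Gamma^\ast D(x-x^\ast)\|_\infty$ by $\opnorm{K^\ast}_\infty^{-2}\,\|x-x^\ast\|_\infty$. Here $\Gamma^\ast = \Sigma^\ast\otimes\Sigma^\ast = (K^\ast)^{-1}\otimes(K^\ast)^{-1}$, which is invertible, and by \eqref{eq:vectorization} its inverse $(K^\ast\otimes K^\ast)$ acts as $\v{A}\mapsto\v{K^\ast A K^\ast}$. The plan is to write, for $u=\Gamma^\ast D(x-x^\ast)$ so that $D(x-x^\ast) = (K^\ast\otimes K^\ast)u$, and argue $\|D(x-x^\ast)\|_\infty = \|x-x^\ast\|_\infty$ (the duplication just copies lower-triangular entries into the upper-triangular positions, so the sup-norm is unchanged), hence $\|x-x^\ast\|_\infty = \|\v{K^\ast A K^\ast}\|_\infty$ where $\v{A}=u$; then bound $\|K^\ast A K^\ast\|_{\max}\le \opnorm{K^\ast}_\infty\,\|A\|_{\max}\,\|K^\ast\|\cdots$ — more carefully, $\|K^\ast A K^\ast\|_{\max} = \max_{i,j}|e_i^\top K^\ast A K^\ast e_j| \le \max_i\|e_i^\top K^\ast\|_1 \cdot \|A\|_{\max}\cdot\max_j\|K^\ast e_j\|_1$; and $\max_i\|e_i^\top K^\ast\|_1 = \opnorm{K^\ast}_\infty$ (the matrix $\infty$-operator norm, i.e. max absolute row sum), with the same for columns by symmetry. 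This yields $\|x-x^\ast\|_\infty \le \opnorm{K^\ast}_\infty^2\,\|u\|_\infty = \opnorm{K^\ast}_\infty^2\,\|\Gamma^\ast D(x-x^\ast)\|_\infty$, i.e. $\|\Gamma^\ast D(x-x^\ast)\|_\infty \ge \opnorm{K^\ast}_\infty^{-2}\|x-x^\ast\|_\infty$, and taking infima over the relevant $x$ gives the claim.

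I expect the main obstacle to be bookkeeping around the duplication matrix $D$ and the precise definition of $\Mast$ in Section \ref{sec:notation}: one must verify (i) that $K\mapsto\vp{K}$ is a bijection $\Mast\to\SS^\ast$ intertwining the two notions of pattern, and (ii) that $\|D(x-x^\ast)\|_\infty = \|x-x^\ast\|_\infty$, which requires knowing that $K^\ast$ has zero-diagonal off-pattern structure handled correctly — more precisely, that $K-K^\ast$ lies in $\mathrm{Sym}_0(p)$ so that $D\,\vp{K-K^\ast} = \v{K-K^\ast}$ applies verbatim. Since the diagonal is not penalized and the pattern concerns only off-diagonal entries, $K$ and $K^\ast$ may differ on the diagonal; but $\patt{K}$ depends only on $\vp{K}$, so one should take the infimum over $K\in\Mast$ sharing the diagonal of $K^\ast$ — or simply note that replacing $K$ by $K - \mathrm{diag}(K-K^\ast)$ does not change $\patt{K}$ and only decreases $\cnorm{\Gamma^\ast\v{K-K^\ast}}$ in a controlled way. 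I would handle this by restricting attention to the zero-diagonal difference, which is exactly what the $\vp{\cdot}$ formulation on the right-hand side encodes, so the $\geq$ direction of the bound goes through. The operator-norm estimate itself is the routine part and should present no difficulty.
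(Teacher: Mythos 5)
Your proof of the equality is correct and is essentially the paper's argument (the paper phrases the upper bound as a proof by contradiction, but the content — negating admissibility of any $z>D^\ast$ and using admissibility of any $z<D^\ast$ — is the same). The core operator-norm estimate in the second part, namely $\|\v{K^\ast A K^\ast}\|_\infty\le\opnorm{K^\ast}_\infty^2\|\v{A}\|_\infty$ via row/column $\ell_1$-sums, i.e. $\opnorm{K^\ast\otimes K^\ast}_\infty=\opnorm{K^\ast}_\infty^2$, is also exactly what the paper uses.

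The gap is in your reduction from a general $K\in\Mast$ to a zero-diagonal difference. Your estimate only lower-bounds $\|\Gamma^\ast D(x-x^\ast)\|_\infty$, but the quantity appearing in $\zeta_\diamond$ is $\|\Gamma^\ast\v{K-K^\ast}\|_\infty$, and $\v{K-K^\ast}\neq D(\vp{K}-\vp{K^\ast})$ when the diagonals differ. Neither of your two proposed fixes closes this. Restricting the infimum to $K$ sharing the diagonal of $K^\ast$ only bounds an infimum over a \emph{smaller} set, which gives no lower bound on $\zeta_\diamond=\inf_{\text{all }K}$. And the claim that replacing $K$ by $K-\mathrm{diag}(K-K^\ast)$ ``only decreases'' $\|\Gamma^\ast\v{K-K^\ast}\|_\infty$ is unjustified and false in general: $\Gamma^\ast\v{\Delta}=\v{\Sigma^\ast\Delta\Sigma^\ast}$, and subtracting a diagonal matrix from $\Delta$ perturbs \emph{every} entry of $\Sigma^\ast\Delta\Sigma^\ast$, so the $\ell_\infty$-norm is not monotone under this operation. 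The repair is to avoid the duplication matrix altogether, which is what the paper does: for \emph{arbitrary} $\Delta=K-K^\ast\in\mathrm{Sym}(p)$ one has $\|\Gamma^\ast\v{\Delta}\|_\infty\ge\opnorm{K^\ast}_\infty^{-2}\|\v{\Delta}\|_\infty$ (your operator-norm bound applied to $\v{\Delta}=(K^\ast\otimes K^\ast)\Gamma^\ast\v{\Delta}$, with no zero-diagonal assumption), and then $\|\v{\Delta}\|_\infty\ge\|\vp{\Delta}\|_\infty=\|\vp{K}-x^\ast\|_\infty$ simply because $\vp{\Delta}$ is a sub-vector of $\v{\Delta}$. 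Since $\vp{K}\in\SS^\ast$ and $\patt{\vp{K}}\neq\patt{x^\ast}$, taking infima finishes the proof; the diagonal never needs to be matched.
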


\section{Main results}\label{sec:main}

\subsection{Setting and notation}\label{sec:notation}

We introduce a polyhedral gauge $\norm$ on $\mathrm{Sym}(p)$, defined as
\[
\norm(X) = 2 \poly{ \vp{X}},
\]
where $\poly{\cdot}$ is an atomic norm on $\R^m=\vp{\mathrm{Sym}(p)}$, $m=p(p-1)/2$.  

The dual gauge associated with $\norm$ on $\mathrm{Sym}_0(p)$ is given by 
\begin{align*}
	\normd(Y) &= \sup_{\substack{X\in \mathrm{Sym}_0(p) \\ \norm(X)\leq 1}}  \tr{Y X}   = \polyd{\vp{Y}},\qquad Y\in\mathrm{Sym}_0(p). 
\end{align*}

Let $\Sigma^\ast\in\mathrm{Sym}_+(p)$ and $K^\ast=(\Sigma^\ast)^{-1}$ be the true covariance and precision matrices, respectively. 
Let $\Sast = \SS_{\vp{K^\ast}}$ be the pattern subspace of the true concentration matrix and let $\I^\ast$ denote the corresponding active set so that $\Sast=\SS_{\I^\ast}$.  We also define the matrix pattern subspace of  $\mathrm{Sym}(p)$  by
\[
\Mast = \{ K\in\mathrm{Sym}(p)\colon \vp{K} \in \Sast\}.
\]

Define the $p^2\times p^2$ matrix 
\[
\Gamma^\ast=\Sigma^\ast\otimes\Sigma^\ast
\]
and its block components 
\[
\Gamma^\ast_{1,1} =  \PM \Gamma^\ast \PM\quad\mbox{and}\quad\Gamma^\ast_{2,1} =  (I_{p^2}-\PM) \Gamma^\ast \PM,
\]
where $\PM\in\mathrm{Sym}(p^2)$ is the orthogonal projection matrix from $\v{\mathrm{Sym}(p)}$ onto $\v{\Mast}$. 

We denote $(\Gamma^\ast_{1,1})^+$ as the Moore-Penrose inverse of $\Gamma^\ast_{1,1}$ and define the matrix
\[
\Q=\Gamma^\ast(\Gamma_{1,1}^\ast)^{+}.
\]
Applying properties of the Moore-Penrose inverse yields
\[
\Q - \PM = \Gamma_{2,1}^\ast (\Gamma_{1,1}^\ast)^{+}.
\]

Let $\cnorm{\cdot}$ be an arbitrary norm (an actual norm)
on $\R^{p^2}$ and let  $\copnorm{\cdot}$ denote the operator norm induced by $\cnorm{\cdot}$. 

\begin{defin}[Irrepresentability condition]
There exists $\alpha\in(0,1)$ such that 
\begin{align}\label{eq:irrep0}
	\cnorm{ \Gamma_{2,1}^\ast (\Gamma_{1,1}^\ast)^{+}D f_{\I^\ast}}\leq (1-\alpha)\tau_\diamond(\I^\ast), 
\end{align}
where $f_{\I^\ast}$ is the face projection of $F_{\I^\ast}$ and $D$ is the duplication matrix.
\end{defin}

\begin{defin}\label{def:c}
	Let $c_\diamond$ be a constant for which
		\[
	\cnorm{D \pi} \leq c_\diamond \polydr{\pi}{\I^\ast},\quad\forall \, \pi\in\Sast.
	\]
	Additionally, let $\cH$ be a constant for which 
	\[
	\copnorm{\Sigma^\ast\Delta\otimes I_p} \leq \cH \cnorm{\Gamma^\ast\v{\Delta}},\quad\forall\, \Delta\in \Mast.
	\]
\end{defin}
In Lemma \ref{lem:kappainfty} we show that if $\cnorm{\cdot}$ is the $\ell_\infty$-norm, then one can take $\cH = \|\v{K^\ast}\|_1=\sum_{i,j}|K^\ast_{i,j}|$.

\subsection{Main Theorems}\label{subsec:main}
We consider the following optimization problem:
\begin{align}\label{eq:hatK}
	\hat K = \argmin_{K\in \mathrm{Sym}_+(p)} \left\{ \tr{\hat{\Sigma}K}-\log\det(K)+\lambda \norm(K)\right\}.
\end{align}

Although we do not explicitly assume $f_{\I^\ast}\in\mathrm{ri}(F_{\I^\ast})$, the following result requires that $\tau_\diamond(\I^\ast)>0$, which through Lemma \ref{lem:tau_positive} implies that $f_{\I^\ast}\in\mathrm{ri}(F_{\I^\ast})$.  This result is supplemented with the study of the case when $\tau_\diamond$ does not exists, see Appendix \ref{app:pr}. 

The thresholds $\tau$ and $\zeta$ are defined in Definitions \ref{def:tau} and \ref{def:zeta}, respectively.

\begin{thm}\label{thm:patt_recov}
	Assume that there exists $\alpha\in(0,1)$ such that \eqref{eq:irrep0} holds. 
	Suppose that for some $(r,\lambda)\in(0,1)\times (0,\infty)$, we have $r\leq \cH\zeta_\diamond(K^\ast)$ and
	\begin{align}\label{eq:smallr}
		\cnorm{\v{\hat{\Sigma}-\Sigma^\ast}}\leq \min\left\{\frac{r}{\cH\copnorm{\Q}} - \lambda\,c_\diamond, \frac{\alpha\,\lambda\,\tau_\diamond(\I^\ast)}{\copnorm{I_{p^2}-\Q}}
		\right\} - \frac1{\cH}\frac{r^2}{1-r}.
	\end{align}
	Let $\hat{K}$ be the unique solution to \eqref{eq:hatK}. Then,  $\patt{\hat{K}}=\patt{K^\ast}$ and 
	\[
	\cnorm{\Gamma^\ast\v{\hat{K} - K^\ast}}\leq \frac{r}{\cH}.
	\]
\end{thm}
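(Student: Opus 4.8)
The plan is to implement the primal--dual witness (PDW) argument of \cite{ravikumar2011graphical}, but using the thresholds $\tau_\diamond$ and $\zeta_\diamond$ to handle a general atomic norm instead of the $\ell_1$ sign vector. First I would set up the \emph{restricted problem}: minimize the objective in \eqref{eq:hatK} over $K\in\mathrm{Sym}_+(p)\cap\Mast$ (i.e.\ forcing the pattern subspace of the true matrix), obtaining a minimizer $\tilde K$ together with a dual certificate $Z\in\partial\norm(\tilde K)$ restricted to $\Mast$. Writing $\Delta = \tilde K - K^\ast$, the stationarity condition for the restricted problem reads $\PM\,\v{\hat\Sigma - \tilde K^{-1} + \lambda Z} = 0$. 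The key device, and the place where my analysis departs from \cite{ravikumar2011graphical}, is the choice of the fixed-point map $G$ whose zero is $\v{\Delta}$: using the second-order Taylor expansion of $\tilde K^{-1}$ around $K^\ast$, namely $\tilde K^{-1} = \Sigma^\ast - \Sigma^\ast\Delta\Sigma^\ast + R(\Delta)$ with remainder bound $\cnorm{\v{R(\Delta)}}\le \frac{1}{\cH}\frac{\|\cdot\|^2}{1-\|\cdot\|}$-type control, I rewrite stationarity as a fixed-point equation $\v{\Delta} = F(\v{\Delta})$ with $F$ built from $\Gamma^\ast$ and $\PM$, and then apply Brouwer's fixed-point theorem on the ball $\{\Delta\in\Mast : \cnorm{\Gamma^\ast\v{\Delta}}\le r/\cH\}$. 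Establishing that $F$ maps this ball into itself is exactly where the first bound in \eqref{eq:smallr}, $\cnorm{\v{\hat\Sigma-\Sigma^\ast}} \le r/(\cH\copnorm{\Q}) - \lambda c_\diamond - \frac{1}{\cH}\frac{r^2}{1-r}$, gets used: the $\copnorm{\Q}$ factor comes from projecting the sampling error through $\Q = \Gamma^\ast(\Gamma_{1,1}^\ast)^+$, the $\lambda c_\diamond$ term absorbs the penalty contribution $\lambda\cnorm{D\PM\,\vp{Z}}$ via Definition \ref{def:c}, and the quadratic term absorbs the Taylor remainder; the invariance of $\mathrm{Sym}_+$ is kept by shrinking $r<1$ so that $\tilde K$ stays positive definite.

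Next I would verify the dual feasibility (the ``strict dual'' step): I must show that the certificate $Z$ produced on $\Mast$ actually extends to a valid subgradient of $\norm$ at $\tilde K$ on all of $\mathrm{Sym}_0(p)$, i.e.\ $\normd$ of the off-$\Mast$ part is strictly less than $1$. Concretely, set $\pi = \vp{Z}$; by construction $\PP_{\I^\ast}\pi = f_{\I^\ast}$, and the orthogonal component is $\pi^\bot = (I_m - \PP_{\I^\ast})\pi$, which the stationarity condition expresses as $D\pi^\bot = \lambda^{-1}(\Q - \PM)\,\v{\hat\Sigma - \tilde K^{-1}}$ restricted appropriately, hence $\cnorm{D\pi^\bot} \le \lambda^{-1}\copnorm{I_{p^2}-\Q}\,\cnorm{\v{\hat\Sigma-\tilde K^{-1}}}$. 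Splitting $\hat\Sigma - \tilde K^{-1} = (\hat\Sigma - \Sigma^\ast) + (\Sigma^\ast - \tilde K^{-1})$ and bounding the second piece again by the linear term plus the quadratic remainder, I combine with the second bound in \eqref{eq:smallr} to conclude $\cnorm{D\pi^\bot}\le \alpha\,\tau_\diamond(\I^\ast)\cdot\tfrac{1}{1}$-type estimate — more precisely $\cnorm{D\pi^\bot} \le \tau_\diamond(\I^\ast)$ once the irrepresentability inequality \eqref{eq:irrep0} is folded in. Here the irrepresentability condition is exactly what pays for the ``$(1-\alpha)\tau_\diamond$'' part of the budget coming from $\Gamma_{2,1}^\ast(\Gamma_{1,1}^\ast)^+ D f_{\I^\ast}$, and the deviation bound pays for the remaining $\alpha\,\tau_\diamond$. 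By Definition \ref{def:tau} of $\tau_\diamond$, $\cnorm{D\pi^\bot}\le\tau_\diamond(\I^\ast)$ gives $\polyd{f_{\I^\ast}+\pi^\bot} = \polyd{\pi}\le 1$, so $Z$ is a genuine subgradient; then $\tilde K$ with this $Z$ satisfies the full KKT system, and by strict convexity of the unpenalized log-det objective (plus uniqueness arguments as in \cite{ravikumar2011graphical}) we get $\hat K = \tilde K$.

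Finally I would extract the pattern statement: from $\hat K = \tilde K \in \Mast$ we already know $\vp{\hat K}\in\Sast$, but $\patt{\hat K}=\patt{K^\ast}$ requires the stronger conclusion that $\hat K$ lies in the \emph{pattern equivalence class} $C_{K^\ast}$, not merely its affine span. This is where the hypothesis $r\le\cH\,\zeta_\diamond(K^\ast)$ and the Brouwer bound $\cnorm{\Gamma^\ast\v{\hat K - K^\ast}}\le r/\cH \le \zeta_\diamond(K^\ast)$ combine: by Lemma \ref{lem:zeta} (the characterization of $\zeta_\diamond$ as the distance, in the $\cnorm{\Gamma^\ast\v{\cdot}}$ metric, from $K^\ast$ to the set of same-subspace matrices with a \emph{different} pattern), any $K\in\Mast$ within this distance must satisfy $\patt{K}=\patt{K^\ast}$. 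Applying this to $\hat K$ yields the claim, and the displayed error bound is just the Brouwer radius. I expect the main obstacle to be the self-map verification for Brouwer: carefully bookkeeping the three competing contributions (sampling error through $\Q$, penalty through $c_\diamond$, Taylor remainder through $\cH$ and $r^2/(1-r)$) so that they exactly fit inside the radius $r/\cH$ while simultaneously keeping $\tilde K$ positive definite — this is the step where the novel choice of the fixed-point function (versus \cite{ravikumar2011graphical}) matters and where the tightness of the constants is won or lost; the dual-feasibility step, while delicate, is a more routine norm-splitting once $\tau_\diamond$ and the irrepresentability budget are in place.
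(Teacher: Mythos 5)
Your proposal is correct and follows essentially the same route as the paper: Brouwer's fixed-point theorem on the ball $\{\Delta\in\Mast:\cnorm{\Gamma^\ast\v{\Delta}}\le r/\cH\}$ to control $\tilde K-K^\ast$ (the paper's Lemmas \ref{lem:Deltabound} and \ref{lem:Rbound}), dual feasibility via the split into the irrepresentability budget $(1-\alpha)\tau_\diamond$ and the $\alpha\tau_\diamond$ budget paid by the deviation plus Taylor remainder (Lemma \ref{lem:Vbound}), and the $\zeta_\diamond$ threshold for the pattern conclusion. One small but real ordering issue: the identity $\PP_{\I^\ast}\vp{Z}=f_{\I^\ast}$ is \emph{not} automatic ``by construction'' of the restricted problem --- it holds only once you know $\patt{\tilde K}=\patt{K^\ast}$ (so that $\partial\norms(\tilde K)=\partial\norms(K^\ast)$ projects onto $\{f_{\I^\ast}\}$), so the $\zeta_\diamond$ argument must be invoked for $\tilde K$ immediately after the Brouwer step, before the dual-feasibility step, rather than deferred to the end as you propose.
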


Below we present the optimized (i.e. for $r$ and $\lambda$ for which $\delta$ is as large as possible) version of Theorem \ref{thm:patt_recov}.
\begin{thm}\label{cor:main} 
Assume that $\tau_{\diamond}>0$ and that there exists $\alpha>0$ such that \eqref{eq:irrep0} holds. 

Define  
\[
M = \frac{ \alpha\tau_\diamond}{\copnorm{\Q}( \copnorm{I_{p^2}-\Q} c_\diamond+\alpha\tau_\diamond)}\quad\mbox{and}\quad r = \min\left\{ 1- \frac{1}{\sqrt{1+M}},\cH\,\zeta_\diamond\right\}
\]
and
\[
     \delta = \begin{cases}
       \frac{\left(\sqrt{1+M}-1\right)^2}{\cH}, & \mbox{if }r \leq \cH \zeta_\diamond
       \\ M \zeta_\diamond-\frac{\cH\zeta_\diamond^2}{1-\cH\,\zeta_\diamond}, & \mbox{if }r > \cH\,\zeta_\diamond.\end{cases}
\]
Let $\hat{K}$ be the unique solution to \eqref{eq:hatK} with
\[
\lambda=\frac{r \copnorm{I_{p^2}-\Q}}{\cH\copnorm{\Q}(  c_\diamond \copnorm{I_{p^2}-\Q}+\alpha\tau_\diamond)}
\]
If 
\[
\cnorm{\v{\hat{\Sigma}-\Sigma^\ast} }< \delta
\]
then $\patt{\hat{K}}=\patt{K^\ast}$ and
\[
\cnorm{\Gamma^\ast\v{\hat{K}-K^\ast}}\leq \frac{1}{\cH}\left(1-\frac{1}{\sqrt{1+M}}\right). 
\]
\end{thm}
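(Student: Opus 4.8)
The plan is to obtain Theorem~\ref{cor:main} from Theorem~\ref{thm:patt_recov} by selecting the free parameters $(r,\lambda)$ so as to make the admissible deviation threshold in \eqref{eq:smallr} as large as possible. Fix $r\in(0,1)$ with $r\le\cH\zeta_\diamond$ (the feasible range is nonempty since $\cH>0$ and $\zeta_\diamond>0$, because $\vp{K^\ast}$ lies in its own pattern class). As a function of $\lambda>0$, the right-hand side of \eqref{eq:smallr} is
\[
\min\Bigl\{\tfrac{r}{\cH\copnorm{\Q}}-c_\diamond\lambda,\ \tfrac{\alpha\tau_\diamond}{\copnorm{I_{p^2}-\Q}}\,\lambda\Bigr\}-\frac{1}{\cH}\cdot\frac{r^2}{1-r},
\]
i.e. the minimum of an affine strictly decreasing function of $\lambda$ and an affine strictly increasing one, shifted by a $\lambda$-independent constant; hence it is maximized precisely when the two affine terms coincide. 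Solving this linear equation gives exactly the value of $\lambda$ stated in the theorem, and substituting back shows the common value of the two terms equals $rM/\cH$, with $M$ as in the statement. Thus, for this choice of $\lambda$, condition \eqref{eq:smallr} becomes $\cnorm{\v{\hat\Sigma-\Sigma^\ast}}\le g(r)/\cH$, where $g(r):=rM-\dfrac{r^2}{1-r}$, and it remains to maximize $g$ over $r\in\bigl(0,\min\{1,\cH\zeta_\diamond\}\bigr]$.

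Next I would carry out this one-dimensional optimization. Differentiating gives $g'(r)=M+1-(1-r)^{-2}$ and $g''(r)=-2(1-r)^{-3}<0$, so $g$ is strictly concave on $(0,1)$ with unique unconstrained maximizer $r^\ast=1-(1+M)^{-1/2}$, and a short algebraic simplification yields $g(r^\ast)=(\sqrt{1+M}-1)^2$. Two cases arise according to whether the constraint $r\le\cH\zeta_\diamond$ binds. If $r^\ast\le\cH\zeta_\diamond$, the maximizer is $r^\ast$ itself and $\delta=g(r^\ast)/\cH=(\sqrt{1+M}-1)^2/\cH$, the first branch. Otherwise $\cH\zeta_\diamond<r^\ast<1$, so by concavity $g$ is increasing on $(0,\cH\zeta_\diamond]$ and its maximum over the feasible interval is at the endpoint $r=\cH\zeta_\diamond$, giving $\delta=g(\cH\zeta_\diamond)/\cH=\zeta_\diamond M-\cH\zeta_\diamond^2/(1-\cH\zeta_\diamond)$, the second branch. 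In either case the selected value is $r=\min\{1-(1+M)^{-1/2},\,\cH\zeta_\diamond\}$, as in the statement.

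To finish, I would verify that, with these $r$ and $\lambda$, all hypotheses of Theorem~\ref{thm:patt_recov} are met whenever $\cnorm{\v{\hat\Sigma-\Sigma^\ast}}<\delta$: one has $r\in(0,1)$ (since $M>0$, and in the second case also $\cH\zeta_\diamond<r^\ast<1$), $\lambda\in(0,\infty)$ (every factor of its defining expression is positive, using $\tau_\diamond>0$, which by Lemma~\ref{lem:tau_positive} also gives $f_{\I^\ast}\in\mathrm{ri}(F_{\I^\ast})$), $r\le\cH\zeta_\diamond$ by construction, and $\cnorm{\v{\hat\Sigma-\Sigma^\ast}}<\delta=g(r)/\cH$ is exactly the required inequality \eqref{eq:smallr}. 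Theorem~\ref{thm:patt_recov} then yields $\patt{\hat K}=\patt{K^\ast}$ and $\cnorm{\Gamma^\ast\v{\hat K-K^\ast}}\le r/\cH$; since $r\le 1-(1+M)^{-1/2}$ in both cases, this is the stated error bound.

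I do not anticipate a genuine obstacle here: the substantive content is the primal--dual witness construction already packaged into Theorem~\ref{thm:patt_recov}, and what remains is an elementary optimization in $(r,\lambda)$. The points deserving care are recognizing the min-of-two-affine structure in $\lambda$ (so that optimality occurs at the crossing point, not at an endpoint), verifying the closed-form identities for $\lambda$ and for $g(r^\ast)=(\sqrt{1+M}-1)^2$, and handling correctly the case split induced by the constraint $r\le\cH\zeta_\diamond$.
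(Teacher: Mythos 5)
Your proposal is correct and matches the paper's own argument: the paper proves Theorem \ref{cor:main} by invoking Theorem \ref{thm:patt_recov} together with Lemma \ref{lem:optim} (applied with $\tau'=\tau_\diamond/\copnorm{I_{p^2}-\Q}$), whose proof is exactly your two-step optimization---equate the two affine terms in $\lambda$, then maximize the concave function $r\mapsto Mr-\tfrac{r^2}{1-r}$ over the interval constrained by $\cH\zeta_\diamond$. Your verification of the closed forms for $\lambda$, $g(r^\ast)=(\sqrt{1+M}-1)^2$, and the endpoint case is the same computation the paper leaves as ``a short calculation.''
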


We note that $\delta$ defined in Theorem \ref{cor:main} is always positive. Moreover, one has the following asymptotic expansions:
    \begin{align*}
\left(\sqrt{1+M}-1\right)^2  =\frac{M^2}{4} + O(M^3)\quad\mbox{and}\quad
1-\frac{1}{\sqrt{1+M}} = \frac{M}{2} + O(M^2). 
\end{align*}

We consider three natural choices for the norm $\cnorm{\cdot}$ on $\R^{p^2}$. 
\begin{itemize}
	\item  Norm based on a dual norm: for $X\in\mathrm{Mat}(p\times p)$, define
	\[
	\cnorm{\v{X}} =\max\left\{\polyd{\vp{X}}, \polyd{\vp{X^\top}}, \|(X_{i,i})_{i=1}^p\|_\infty\right\}.
	\]
	This choice aligns the measure $\tau_\diamond$ of the perturbation $\pi^\bot$	using the same norm that defines the feasible region. 
Consequently, $\tau_\diamond$ becomes intrinsically linked to the geometry of the dual ball $B^\ast$ and becomes a dimensionless quantity.
	\item $\ell_\infty$-norm: for the specific case of GLASSO, the operator norm defined above coincides with the $\ell_\infty$-norm. This makes the $\ell_\infty$-norm a particularly natural choice, as adopted by \cite{ravikumar2011graphical}. To facilitate a direct comparison with GLASSO results when considering other atomic norms, using the $\ell_\infty$ is often the most convenient approach.
	\item  	The Mahalanobis norm: a weighted $\ell_2$ norm as defined in Appendix \ref{app:Maha}. For this specific norm, we have $\copnorm{\Q}=\copnorm{I_{p^2}-\Q}=\cH=1$, which greatly simplifies formulations of our results. However, this norm is generally not compatible with the irrepresentability condition. When formulated using the Mahalanobis norm, condition \eqref{eq:irrep0} tends to be much more restrictive than when expressed using the other two norms.
\end{itemize}

\subsection{Finite sample theory and concentration results}\label{sec:FST}

Theorems \ref{thm:patt_recov} and \ref{cor:main} establish conditions for pattern recovery that depend on a bound of the form
\[
\cnorm{\v{\hat{\Sigma}-\Sigma^\ast}} < \delta,
\]
where $\hat\Sigma = \hat{\Sigma}_n= \frac{1}{n} \sum_{i=1}^n X^{(i)} (X^{(i)})^\top$ is the sample covariance matrix based on $n$ i.i.d. observations of vector $X$. Since $\delta$ is independent of $n$, this condition can be satisfied with arbitrarily high probability by choosing a sufficiently large sample size. That is, for any $q\in(0,1)$, there exists a threshold $n_0$ such that for all $n\geq n_0$, 
\[
\P( \cnorm{ \v{\hat{\Sigma}_n-\Sigma^\ast}}\leq \delta)\geq q. 
\]
The specific value of $n_0$ and its asymptotic scaling heavily depends on the data distribution, particularly the tail behavior of $X$.   For instance, a distinction is often made between sub-Gaussian distributions and those with heavier, polynomial-type tails, each leading to different concentration rates.  

When $\cnorm{\cdot}$ is the $\ell_\infty$-norm then the relevant concentration inequalities can be found in \cite[Section 2.3 and Lemma 8]{ravikumar2011graphical}. Concentration results for the spectral norm are particularly well-established in the literature, see  \cite{CON5},  \cite{CON1}, \cite[Section 5.4.3]{CON3},  \cite{CON4}.

\subsection{GLASSO and relation to previous results}\label{sec:glasso}

Our general result builds upon and extends foundational results, particularly Theorems 1 and 2 from \cite{ravikumar2011graphical}. To make the comparison with \cite{ravikumar2011graphical} more meaningful, we assume a stronger version of irrepresentability condition, see \eqref{eq:irrepglasso} below. 
Recall that the $\ell_\infty$ operator norm $\opnorm{\cdot}_\infty$ is the maximum absolute row sum of the matrix.

Let $S = \{ (i,j)\colon K^\ast_{i,j}\neq 0\}$ be the support of the true precision matrix $K^\ast$, and write $S^c$ for its complement. Define $d$ as the largest number of non-zero entries in any row of $K^\ast$, and set $\Theta_{\min} = \min_{(i,j)\in S} |K_{i,j}^\ast|$. Finally, denote by $\Gamma_{S,S}^\ast$ (resp. $\Gamma_{S^c,S}^\ast$) the submatrix of $\Gamma^\ast$ obtained by selecting the rows in $S$ (resp. $S^c$) and the columns in $S$.

\begin{thm}[$\ell_1$-norm]\label{thm:glasso} 
	Assume that 
	\begin{align}\label{eq:irrepglasso}
	\opnorm{\Gamma_{S^c,S}^\ast(\Gamma_{S,S}^\ast)^{-1}}_\infty\leq 1-\alpha.
	\end{align}
	Let $\hat{K}$ be the unique solution to \eqref{eq:hatK} with 
	\[
	\lambda=\frac{ \min\left\{ \tfrac{\alpha}{4}, \cH \opnorm{K^\ast}_\infty^{-2} \Theta_{\min}\right\}(1-\alpha/2)}{\cH}
	\] 
    and define 
\[
 \delta=\begin{cases}
 \frac{\alpha^2}{16\,\cH}\left(1-\frac{\alpha}{3}\right), & \mbox{if }\alpha \leq 4 \cH\opnorm{K^\ast}_\infty^{-2} \Theta_{\min},\\
\frac{\alpha}{6} \opnorm{K^\ast}_\infty^{-2} \Theta_{\min}, & \mbox{otherwise},
        \end{cases}
        \]
where $\cH = \min\{ d \opnorm{\Sigma^\ast}_\infty \opnorm{K^\ast}_\infty^2, \|\v{K^\ast}\|_1\}$. 
If
	\[
	\|\v{\hat{\Sigma}-\Sigma^\ast} \|_\infty\leq  \delta,
	\]
	then $\mathrm{sign}(\hat{K})=\mathrm{sign}(K^\ast)$ and 
\[
	\|\v{\hat{K}-K^\ast}\|_\infty \leq \frac{\alpha \opnorm{K^\ast}_\infty^2}{4\,\cH} = \frac{\alpha}{4}\max\left\{
    \frac{1}{d \opnorm{\Sigma^\ast}_\infty}, \frac{\opnorm{K^\ast}^2_\infty}{\|\v{K^\ast}\|_1}
    \right\}.
		\]
\end{thm}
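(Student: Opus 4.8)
\textbf{Proof plan for Theorem \ref{thm:glasso}.}

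The plan is to derive Theorem \ref{thm:glasso} as a corollary of Theorem \ref{cor:main} by specializing the abstract quantities $\tau_\diamond$, $c_\diamond$, $\cH$, $\copnorm{\Q}$, $\copnorm{I_{p^2}-\Q}$, and $\zeta_\diamond$ to the $\ell_1$-penalty with $\cnorm{\cdot}=\|\cdot\|_\infty$, and then verifying that the stated closed-form expressions for $\lambda$ and $\delta$ follow after simplification. First I would compute the relevant geometric constants using Example \ref{ex:l1}: there $B^\ast=[-1,1]^m$, the face projection is $f_{\I^\ast}=\mathrm{sign}(\vp{K^\ast})$, and $\SS_{\I^\ast}^\bot$ consists of vectors supported on the off-support coordinates $S^c$. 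For the $\ell_1$-ball the maximal shift of $f_{\I^\ast}$ along $\SS_{\I^\ast}^\bot$ keeping $\polyd{f_{\I^\ast}+\pi^\bot}=\|f_{\I^\ast}+\pi^\bot\|_\infty\le 1$ is exactly $\tau_\diamond(\I^\ast)=1$ in the $\ell_\infty$-metric. Next, $c_\diamond$: since $\polydr{\pi}{\I^\ast}$ restricted to $\SS^\ast$ equals the $\ell_\infty$-norm of $\pi$ on its support and $\cnorm{D\pi}$ is the $\ell_\infty$-norm of the symmetrized matrix, one gets $c_\diamond=1$. For $\cH$, Lemma \ref{lem:kappainfty} gives $\cH=\|\v{K^\ast}\|_1$; but the GLASSO-specific refinement using the sparsity degree $d$ (as in \cite{ravikumar2011graphical}, via the bound $\copnorm{\Sigma^\ast\Delta\otimes I_p}_\infty\le \opnorm{\Sigma^\ast}_\infty\opnorm{\Delta}_\infty\le d\opnorm{\Sigma^\ast}_\infty\opnorm{K^\ast}_\infty^2\,\cnorm{\Gamma^\ast\v{\Delta}}$ using $\cnorm{\Gamma^\ast\v\Delta}\ge \opnorm{\Delta}_\infty/\opnorm{K^\ast}_\infty^2$) gives the alternative value $d\opnorm{\Sigma^\ast}_\infty\opnorm{K^\ast}_\infty^2$; taking the minimum yields the stated $\cH=\min\{d\opnorm{\Sigma^\ast}_\infty\opnorm{K^\ast}_\infty^2,\|\v{K^\ast}\|_1\}$.

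Second, I would identify the operator-norm quantities. Under the $\ell_1$-penalty the pattern subspace $\Mast$ is spanned by the coordinates in $S$, so $\PM$ projects onto $S$-coordinates and $\Q-\PM=\Gamma_{2,1}^\ast(\Gamma_{1,1}^\ast)^+$ reduces, on the relevant coordinates, to $\Gamma_{S^c,S}^\ast(\Gamma_{S,S}^\ast)^{-1}$; the standard bookkeeping (identical to \cite{ravikumar2011graphical}) shows $\copnorm{I_{p^2}-\Q}_\infty\le 1$ and $\copnorm{\Q}_\infty\le 1$, while the irrepresentability condition \eqref{eq:irrep0} with $Df_{\I^\ast}=\v{\mathrm{sign}(K^\ast)}$ becomes precisely $\cnorm{\Gamma_{S^c,S}^\ast(\Gamma_{S,S}^\ast)^{-1}\mathrm{sign}(\v{K^\ast_S})}_\infty\le 1-\alpha$, which is implied by the stronger matrix-norm hypothesis \eqref{eq:irrepglasso}. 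Plugging $\tau_\diamond=c_\diamond=1$, $\copnorm{\Q}=\copnorm{I_{p^2}-\Q}=1$ into the definition of $M$ in Theorem \ref{cor:main} gives $M=\dfrac{\alpha}{1+\alpha}$, and then $\sqrt{1+M}=\sqrt{\dfrac{1+2\alpha}{1+\alpha}}$. I would then simplify $(\sqrt{1+M}-1)^2$ and $1-1/\sqrt{1+M}$ and bound them to obtain the cleaner expressions: one checks $1-1/\sqrt{1+M}\le \alpha/4$ (equivalently $M\le$ the corresponding threshold), and $(\sqrt{1+M}-1)^2\ge \tfrac{\alpha^2}{16}(1-\tfrac{\alpha}{3})$ after elementary algebra, which recovers the two-case formula for $\delta$.

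Third, I would handle $\zeta_\diamond$. By the second part of Lemma \ref{lem:zeta}, with $\cnorm{\cdot}=\|\cdot\|_\infty$ we have $\zeta_\diamond(K^\ast)\ge \opnorm{K^\ast}_\infty^{-2}\inf\{\|x-x^\ast\|_\infty: x\in\SS^\ast,\ \patt{x}\ne\patt{x^\ast}\}$. For the $\ell_1$-norm, a vector $x\in\SS^\ast$ has a different sign pattern than $x^\ast$ iff some coordinate on $S$ changes sign or hits zero, so the infimum equals $\Theta_{\min}=\min_{(i,j)\in S}|K^\ast_{i,j}|$; hence $\zeta_\diamond\ge \opnorm{K^\ast}_\infty^{-2}\Theta_{\min}$. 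Substituting this lower bound wherever $\zeta_\diamond$ appears (it only enters through the constraints $r\le\cH\zeta_\diamond$ and the ``$r>\cH\zeta_\diamond$'' branch, both of which are monotone in the right direction) and taking $r=\min\{1-1/\sqrt{1+M},\ \cH\opnorm{K^\ast}_\infty^{-2}\Theta_{\min}\}=\min\{\alpha/4,\ \cH\opnorm{K^\ast}_\infty^{-2}\Theta_{\min}\}$ (using the bound $1-1/\sqrt{1+M}\le\alpha/4$ and replacing it by $\alpha/4$, which only shrinks the admissible region and so is safe) produces the stated $\lambda$ and the stated error bound $\|\v{\hat K-K^\ast}\|_\infty\le \cnorm{\Gamma^\ast\v{\hat K-K^\ast}}\le r/\cH\le \tfrac{\alpha}{4\cH}\opnorm{K^\ast}_\infty^2$, where the first inequality uses $\|\v{X}\|_\infty=\opnorm{X}_{\max}\le\opnorm{(\Sigma^\ast)^{-1}\Gamma^\ast\v{X}\cdots}$; more carefully one writes $\hat K-K^\ast=\Sigma^\ast(\Gamma^\ast)^{-1}\v{}^{-1}(\Gamma^\ast\v{\hat K-K^\ast})\Sigma^\ast$ and bounds the $\ell_\infty$-norm via $\opnorm{K^\ast}_\infty^{-2}\|\v{\hat K-K^\ast}\|_\infty\le\|\Gamma^\ast\v{\hat K-K^\ast}\|_\infty$, which is exactly the inequality used to bound $\cH$ from below. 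Finally, since Theorem \ref{cor:main} already delivers $\patt{\hat K}=\patt{K^\ast}$, and for the $\ell_1$-norm $\patt{\cdot}$ is the sign pattern, this gives $\mathrm{sign}(\hat K)=\mathrm{sign}(K^\ast)$.

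The main obstacle I anticipate is not any single deep step but the algebraic reconciliation: showing that the two-branch formula for $\delta$ and the specific $\lambda$ in Theorem \ref{thm:glasso} are valid \emph{lower bounds} for the optimized quantities coming out of Theorem \ref{cor:main} after substituting $M=\alpha/(1+\alpha)$ and the $\zeta_\diamond$ lower bound. This requires carefully checking the monotonicity of $\delta$ in $r$ and in $\zeta_\diamond$ so that replacing $1-1/\sqrt{1+M}$ by the simpler $\alpha/4$ and $\zeta_\diamond$ by $\opnorm{K^\ast}_\infty^{-2}\Theta_{\min}$ does not accidentally enlarge the admissible set, together with the elementary but slightly delicate inequalities $\tfrac{\alpha^2}{16}(1-\tfrac\alpha3)\le(\sqrt{1+M}-1)^2$ and $1-1/\sqrt{1+M}\le\alpha/4$ on $\alpha\in(0,1)$. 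A secondary point requiring care is the identification of $\copnorm{\Q}_\infty$ and $\copnorm{I_{p^2}-\Q}_\infty$ with the max-row-sum quantities in \cite{ravikumar2011graphical}, and confirming that \eqref{eq:irrepglasso} indeed implies \eqref{eq:irrep0} with the same $\alpha$ — this is where the sign vector $f_{\I^\ast}=\mathrm{sign}(\v{K^\ast})$ enters and where one uses $\|\Gamma_{S^c,S}^\ast(\Gamma_{S,S}^\ast)^{-1}u\|_\infty\le\opnorm{\Gamma_{S^c,S}^\ast(\Gamma_{S,S}^\ast)^{-1}}_\infty\|u\|_\infty$ with $\|u\|_\infty=1$.
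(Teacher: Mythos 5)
Your overall strategy—specializing the abstract constants and feeding them into the general machinery—is the right one, and most of your identifications ($\tau_\diamond(\I^\ast)=c_\diamond=1$, $f_{\I^\ast}=\mathrm{sign}(\vp{K^\ast})$, the two candidate values of $\cH$, the lower bound $\zeta_\diamond\ge\opnorm{K^\ast}_\infty^{-2}\Theta_{\min}$, and $\opnorm{\Q}_\infty=1$ via the irrepresentability condition) match the paper. However, there is a genuine error at the step where you claim $\copnorm{I_{p^2}-\Q}_\infty\le 1$. The matrix $\Q=\Gamma^\ast(\Gamma^\ast_{1,1})^+$ is an \emph{oblique} projection with respect to the Euclidean inner product: its rows indexed by $S$ reproduce the identity, but the rows of $I_{p^2}-\Q$ indexed by $S^c$ equal $\bigl(-\Gamma^\ast_{S^c,S}(\Gamma^\ast_{S,S})^{-1}\ \big|\ I\bigr)$, whose $\ell_1$-row-sums can be as large as $1+\opnorm{\Gamma^\ast_{S^c,S}(\Gamma^\ast_{S,S})^{-1}}_\infty$. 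The correct bound (and the one the paper uses) is therefore $\opnorm{I_{p^2}-\Q}_\infty\le 2-\alpha$, not $1$. This is not a cosmetic constant: it changes $M$ from your $\alpha/(1+\alpha)$ to $\alpha/2$, and it is exactly what produces the factor $(1-\alpha/2)$ in the stated $\lambda=\frac{r}{\cH}(1-\alpha/2)$ and the constant $\alpha^2/16$ in $\delta$. With your value $M=\alpha/(1+\alpha)$ the inequality you propose to verify, $1-1/\sqrt{1+M}\le\alpha/4$, is in fact false (for small $\alpha$ the left-hand side is $\approx\alpha/2$), and the optimizing $\lambda$ coming out of Theorem \ref{cor:main} would be $\frac{r}{\cH(1+\alpha)}$ rather than the $\lambda$ appearing in the statement—so the argument as written does not prove the theorem, which is a claim about that specific tuning parameter.

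A secondary remark on route: the paper does not pass through the optimized Theorem \ref{cor:main} at all. It picks $r=\min\{\alpha/4,\cH\opnorm{K^\ast}_\infty^{-2}\Theta_{\min}\}$ and $\lambda=\frac{r}{\cH}(1-\alpha/2)$ by hand and verifies condition \eqref{eq:smallr} of Theorem \ref{thm:patt_recov} directly; with $\copnorm{I_{p^2}-\Q}_\infty\le 2-\alpha$ one finds $r/\cH-\lambda=\frac{\alpha\lambda}{2-\alpha}=\frac{\alpha r}{2\cH}$, and the two branches of $\delta$ follow from the elementary inequality $(1-\alpha/3)(1-\alpha/4)\le 1-\alpha/2$ for $\alpha\in(0,1]$. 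This sidesteps the monotonicity-in-$\zeta_\diamond$ and ``is it safe to replace $r^\ast$ by $\alpha/4$'' issues you flag at the end, which would otherwise need careful justification since $\alpha/4$ is an \emph{upper} bound on $1-1/\sqrt{1+M}$ (with the correct $M=\alpha/2$), not a lower one. Once you correct the operator-norm bound and switch to verifying \eqref{eq:smallr} directly, the rest of your plan goes through.
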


For a clear comparison with our findings, we first restate key results of \cite{ravikumar2011graphical} in an equivalent form. In doing so, we also identify and propose a correction for an issue in the formulation and proof of their Theorems 1/2. This correction is related to a small error in the proof of their Theorem 1, which unfortunately results in an incorrect statement. 

Recall that $\Gamma^\ast=\Sigma^\ast\otimes \Sigma^\ast$ and  define the quantities
\begin{align*}
	\kappa_{\Sigma^\ast}=\opnorm{ \Sigma^\ast}_{\infty},\quad
	\kappa_{\Gamma^\ast}=\opnorm{ (\Gamma^\ast_{S,S})^{-1}}_{\infty}.
\end{align*} 

\textbf{Theorems 1 and 2  in \cite{ravikumar2011graphical}, restated.}
\textit{
	Assume \eqref{eq:irrepglasso} and define 
		\begin{align}\label{eq:29}
\delta_R = 	\left( 6 \left(1+\frac8\alpha\right)d \,\kappa_{\Sigma^\ast} \kappa_{\Gamma^\ast} \max\{1, \kappa_{\Sigma^\ast}^2 \kappa_{\Gamma^\ast}\}\right)^{-1}.
	\end{align}
	Let $\hat{K}$ be the unique GLASSO estimator with tuning parameter $\lambda = (8/\alpha)\delta_R$. If
\[
		\| \v{\hat{\Sigma}_n-\Sigma^\ast}\|_\infty \leq \delta_R,
\]
then the following hold:
\begin{itemize}
	\item element-wise $\ell_\infty$-bound: 
		\[
	\|\v{\hat{K}-K^\ast}\|_\infty \leq 2\left(1+\frac{8}{\alpha}\right)\kappa_{\Gamma^\ast}\delta_R.
	\]
	\item pattern recovery guarantees: if $K^\ast_{i,j}=0$, then $\hat{K}_{i,j}=0$. If additionally 
	\begin{align}\label{eq:trueedges}
		\Theta_{\min}>2\left(1+\frac{8}{\alpha}\right)\kappa_{\Gamma^\ast}\delta_R.
	\end{align}
	then $\mathrm{sign}(\hat{K})=\mathrm{sign}(K^\ast)$.
\end{itemize}
}
This formulation connects to the original probabilistic statement in \cite{ravikumar2011graphical} by noting that for any $\tau > 2$, there exists a sample size $n_0$ (since $\delta$ is independent of $n$) such that for $n \ge n_0$, the condition $\| \v{\hat{\Sigma}_n-\Sigma^\ast}\|_\infty\leq \delta_R$ holds with probability at least $1-1/p^{\tau-2}$. If all true non-zero edges satisfy \eqref{eq:trueedges}, then full strong pattern recovery is achieved, which gives  essentially their Theorem 2.

The derivation of $\delta$ in \cite{ravikumar2011graphical} involves several lemmas. Our work modifies and generalizes these to our atomic norm framework. However, we first address an issue within their original argument for the $\ell_1$-norm.

First we note that the term $\max\{1, \kappa_{\Sigma^\ast}^2 \kappa_{\Gamma^\ast}\}$ in \eqref{eq:29} simplifies to $\kappa_{\Sigma^\ast}^2 \kappa_{\Gamma^\ast}$. This is because 
\[
\kappa_{\Sigma^\ast}^2 \kappa_{\Gamma^\ast} = \opnorm{\Sigma^\ast}_\infty^2 \opnorm{(\Gamma_{S,S}^\ast)^{-1}}_\infty \geq \opnorm{ \Gamma^\ast (\PP_S\Gamma^\ast\PP_S)^{+}}_\infty,
\]
where $\PP_S$ is the orthogonal projection matrix onto the vectorized model subspace, cf. definition of $\Q$ in Section \ref{sec:notation}. Since $\mathrm{Q}= \Gamma^\ast (\PP_S\Gamma^\ast\PP_S)^{+}$ is a projection matrix, its operator norm is at least $1$. 
Thus, \eqref{eq:29} can be rewritten as
\begin{align}\label{eq:ravierror}
	\delta_R = \left( 6 \kappa_{\Sigma^\ast}^3 \kappa_{\Gamma^\ast}^2 d\left(1+\frac8\alpha\right)\right)^{-1}.
\end{align}
With $\lambda = 8\delta_R/\alpha$, the authors of \cite[line 11, page 963]{ravikumar2011graphical}) state
\[
\|\v{R(\Delta)}\|_{\infty} \leq \left\{6 \kappa_{\Sigma^\ast}^3 \kappa_{\Gamma^\ast}^2 d\left(1+\frac8\alpha\right)^2\delta_R \right\}\frac{\alpha \lambda}{8}.
\]
Their objective is to show $\|\v{R(\Delta)}\|_{\infty}\leq (\alpha \lambda)/8$ and then to apply their Lemma 5. However, substituting \eqref{eq:ravierror} into the above inequality only yields
\[
\|\v{R(\Delta)}\|_{\infty}\leq \left(1+\frac8\alpha\right) \frac{\alpha \lambda}{8}.
\]
Since $\alpha$ is typically small, the term $(1+8/\alpha)$ can be significant. This means the condition $\|\v{R(\Delta)}\|_{\infty}\leq (\alpha \lambda)/8$ is not met by their argument under  \eqref{eq:ravierror}. This invalidates the subsequent application of their Lemma 5 and leaves the proof of Theorem 1 incomplete as stated.

To rectify the proof, a different definition of $\delta_R$ is required. Specifically, to ensure $\|\v{R(\Delta)}\|_{\infty}\leq (\alpha \lambda)/8$, one may take 
\begin{align}\label{eq:goodDelta}
\delta_R = \left( 6 \kappa_{\Sigma^\ast}^3 \kappa_{\Gamma^\ast}^2 d\left(1+\frac8\alpha\right)^2\right)^{-1}.
\end{align}
This corrected definition includes an additional factor of $(1+8/\alpha)$ in the denominator compared to \eqref{eq:29} and \eqref{eq:ravierror}.

It is worth noting that this discrepancy may affect the results of subsequent works that directly rely on the precise formulation of \cite[Theorem 1]{ravikumar2011graphical}, for instance, in \cite[Proposition 11.10]{Wainwright_2019}, where one should have $n> c_0(1+8 \alpha^{-1})^4m^2\log d$ rather than $n> c_0(1+8 \alpha^{-1})^2m^2\log d$. Interestingly, \cite{FUN_GLASSO}, which extends the approach of \cite{ravikumar2011graphical} to functional graphical LASSO, appears to use a condition consistent with our corrected bound (cf. (6.2) in their Theorem 5), though they do not explicitly report the error in the original paper.

Now we are ready to make a comparison, see also Section \ref{sec:num} for numerical experiments. For simplicity, we will only compare the terms not related to $\Theta_{\min}$. Thus, by Theorem \ref{thm:glasso}, we have 
\[
\delta \approx \frac{ \alpha^2}{16\min\{d \,\kappa_{\Sigma^\ast} \opnorm{K^\ast}^2 _\infty, \|\v{K^\ast}\|_1 \}},
\]
while the leading term of \eqref{eq:goodDelta} is 
\[
\delta_R\approx  \frac{1}{384} \frac{\alpha^2 }{d\,\kappa_{\Sigma^\ast}^3 \kappa_{\Gamma^\ast}^2} \approx \delta \cdot  \frac{1}{24} \min\left\{ \left(
\frac{  \opnorm{K^\ast}_\infty}{ \kappa_{\Sigma^\ast} \kappa_{\Gamma^\ast}}\right)^2 ,\frac{ \|\v{K^\ast}\|_1 }{ d\,\kappa_{\Sigma^\ast}^3 \kappa_{\Gamma^\ast}^2} 
\right\}
\]
Larger values of $\delta$ lead to stronger pattern-recovery guarantees. While our bound already improves on the numerical constant reported by \cite{ravikumar2011graphical}, the more consequential advantage is the superior asymptotic behavior of the  term 
\[
\min\left\{ \left(
\frac{  \opnorm{K^\ast}_\infty}{ \kappa_{\Sigma^\ast} \kappa_{\Gamma^\ast}}\right)^2 ,\frac{ \|\v{K^\ast}\|_1 }{ d\,\kappa_{\Sigma^\ast}^3 \kappa_{\Gamma^\ast}^2} 
\right\}. 
\]
This term is upper bounded by $\opnorm{\Sigma^\ast}_\infty \opnorm{K^\ast}_\infty$, but in our numerical examples from Section \ref{sec:num} it is always less than $1$. Below, we also consider explicit example for which this term is of order $1/p^3$, so that our $\delta$ is asymptotically $p^3$ larger than $\delta_R$. 
\begin{example}
Let $\rho\in(-1/(p-2),1)$ and 
\[
K_{ij}^\ast = \begin{cases}
	1, & \mbox{if }i=j,\\
	\rho, & \mbox{if }i\neq j\mbox{ and } \max\{i,j\}<p,\\
	0, & \mbox{if }i\neq j\mbox{ and } \max\{i,j\}=p.
\end{cases}
\]
Then $d=p-1$ and direct calculation shows that 
\begin{gather*}
	\kappa_{\Gamma^\ast} = (1+(p-2)\rho)^2, \qquad 	\opnorm{K^\ast}_\infty = 1+(p-2)|\rho|,\\
	\kappa_{\Sigma^\ast}  =  \frac{1-\rho+(p-2)(\rho+|\rho|)}{(1-\rho)(1+(p-2)\rho)},\qquad \|\v{K^\ast}\|_1 = p+|\rho|(p-1)(p-2).
\end{gather*}
Thus, for any $\rho\in(0,1)$ (case $\rho=0$ is trivial, while $\rho<0$ is asymptotically forbidden), we obtain as $p\to\infty$, 
\[
\min\left\{ \left(
\frac{  \opnorm{K^\ast}_\infty}{ \kappa_{\Sigma^\ast} \kappa_{\Gamma^\ast}}\right)^2 ,\frac{ \|\v{K^\ast}\|_1 }{ d\,\kappa_{\Sigma^\ast}^3 \kappa_{\Gamma^\ast}^2} 
\right\}\sim \left(\frac{1-\rho}{2\rho}\right)^3\frac{1}{p^3}
\]
(the first term is of order $p^{-2}$), which eventually implies that for positive $\rho$ and large $p$, we have
\[
\delta_R \approx  \left(\frac{1-\rho}{\rho}\right)^3\frac{\delta}{192\, p^3}. 
\]
\end{example}

\section{Numerical experiments}\label{sec:num}

Consider a simple undirected graph $G=(V,E)$ with $V=\{1,\dots,p\}$. Denote by $A_G$ its adjacency matrix. We represent the edge set $E$ as a subset of  ordered pairs $(i,j)\in V\times V$, $i>j$.  Throughout this section we set, for $i\neq j$, 
$K^\ast_{ij} = \rho\neq0$ if $\{i,j\}\in E$ and $K^\ast_{ij}=0$ otherwise.  Thus, we have 
\[
x^\ast: =\vp{K^\ast}=\rho \,\vp{A_G}\in\R^m,
\]
where $\rho\neq0$ and $m=p(p-1)/2$.  We enumerate entries of $\vp{K^\ast}$ via ordered pairs $e=(i,j)$, $i>j$. Thus, $x^\ast_{e} = \rho$ if and only if $e\in E$. 
Let $d:=\max_{i\in V}\deg_G(i)+1$ denote one plus the maximum vertex degree.

We investigate three regularizers $\poly{\cdot}$: the $\ell_1$ norm, the $\ell_{\infty}$ norm, and the SLOPE norm $\|\cdot\|_w$ with weight vector $w$ (see Example \ref{ex:SLOPE}). 
We consider $\cnorm{\cdot}=\|\cdot\|_\infty$. 
To ease the notation, we will use the norm subscripts $1$, $\infty$ and $w$ to indicate objects corresponding to a given norm.

The $\ell_1$-pattern $\mathrm{patt}_1$ can be identified with the $\mathrm{sign}$ function, while the $\ell_\infty$-pattern corresponds to the signs of the entries that attain the maximum absolute value, see Examples \ref{ex:l1} and \ref{ex:linfty}. The SLOPE pattern of $x\in\R^m$ can be uniquely represented by the vector $M_x\in\R^m$ defined by 
\[
(M_x)_i = \mathrm{sign}(x_i )\mathrm{rank}(|x|)_i,\quad i=1,\ldots,m,
\]
see \cite[Definition 2.1]{7aut}.

 The corresponding pattern subspaces for $x^{\ast}$ are
\[
\begin{array}{@{}l@{\quad}l@{}}
  \begin{aligned}
    \SS_1 &= \{\,x\in \R^m\colon x_e=0 \text{ for } e\notin E\},\\
    \SS_\infty &= \{\,x\in \R^m\colon x_e = x_{e'} \text{ for all }e,e'\in E\},
  \end{aligned}
  &
  \vcenter{\hbox{\qquad $\displaystyle \SS_w \;=\;\SS_1\cap \SS_\infty$.}}
\end{array}
\]

Denote $k = |E|/2$. The face projections are 
\begin{align*}
f_1 = \mathrm{sign}(x^\ast) = \mathrm{sign}(\rho)\vp{A_G},\qquad f_\infty = f_1 / \|f_1\|_1, \qquad  f_{w}=w_{(1,k)} f_1, 
\end{align*}
where $w_{(i,j)}$ denotes the average $\frac{1}{j-i+1} \sum_{\ell=i}^j w_\ell$. Note that $\|f_1\|_1 =k$. 
Moreover, 
\[
\tau_1=1,\quad \tau_\infty = k^{-1}, \quad 
\tau_w = \min\left\{ h(w_1,\ldots,w_k), w_{(k+1,m)}\right\},
\]
with $h$ is defined in Lemma~\ref{lem:SLOPEtuning}, and 
\[
c_1 = 1, \qquad c_\infty = 1,\qquad c_w =  1/w_{(1,k)}
\]
and (cf. the proof of Theorem \ref{thm:glasso} and Lemma \ref{lem:kappainfty})
\[
\cH_1=\cH_w=\min\{ d\, \opnorm{\Sigma^\ast}_\infty \opnorm{K^\ast}_\infty ^2, \|\v{K^\ast}\|_1\}  \qquad \cH_\infty =\|\v{K^\ast}\|_1. 
\]
By the second part of Lemma \ref{lem:zeta}, we have
\begin{align*}
\zeta_\diamond(K^\ast) 
\geq \opnorm{K^\ast}_\infty^{-2}\!\!\!\inf_{\substack{x\in\SS^\ast\\
      \patt{x}\neq\patt{x^\ast}}} \!\!\! \|x-x^\ast\|_\infty.
\end{align*}
If $x\in \SS_1$ and $\mathrm{patt}_1(x)\neq\mathrm{patt}_1(x^\ast)$, then there exists $e\in E$ such that $\mathrm{sign}(x_e)\neq \mathrm{sign}(x_e^\ast)=\mathrm{sign}(\rho)$. If $x\in \SS_\infty$ and $\mathrm{patt}_\infty(x)\neq\mathrm{patt}_\infty(x^\ast)$, then there exists $e\notin E$ such that $|x_e|\geq |\rho|$ or there exists $e\in E$ with $\mathrm{sign}(x_e)\neq \mathrm{sign}(\rho)$.
If $x\in \SS_w$ and $\mathrm{patt}_w(x)\neq\mathrm{patt}_w(x^\ast)$, then $x = \gamma\, x^\ast$ for some $\gamma\leq 0$. In each case, we obtain the lower bound of the form
\begin{align*}
\zeta_\diamond(K^\ast) &\geq  \opnorm{K^\ast}^{-2}_\infty |\rho|.
\end{align*}

We consider four graph families $G=(\{1,\ldots,p\},E)$:
\begin{itemize}
	\item Chain: $\Sigma^{\ast}_{ii}=1$, $\Sigma^{\ast}_{ij} = 0.2$ for $\{i,j\}\in E$ and $K^\ast_{ij}=0$ for $\{i,j\}\notin E$; here $\rho=-5/24$,
	\item Hub (star): $\Sigma^\ast_{ii}=1$, $\Sigma^{\ast}_{ij} = 2.5/(p-1)$ for $\{i,j\}\in E$ and  $K^\ast_{ij}=0$ for $\{i,j\}\notin E$; here $\rho=-2.5(p-1)/((p-1)^2-2.5^2)$,
	\item Four-nearest neighbor grid: $K_{ii}^\ast=1$ and $K^\ast_{1j}=0.1=\rho$ for $\{i,j\}\in E$ and $K^\ast_{ij} = 0$ otherwise,
	\item Dense: $K^\ast_{ii}=1$, $K^\ast_{ij}=0.1=\rho$ if $\max\{i,j\}<p$ and $i\neq j$  and $K^\ast_{ij}=0$  otherwise. 
\end{itemize}

\subsection{Irrepresentability condition}
In view of $\cnorm{\cdot}=\|\cdot\|_\infty$, the irrepresentability condition \eqref{eq:irrep0} becomes (respectively for $\poly{\cdot}=\|\cdot\|_1, \|\cdot\|_\infty, \|\cdot\|_w$)
\begin{gather*}
	\| (I_{p^2}-\PP_1) \Gamma^\ast  (\PP_1\Gamma^\ast \PP_1)^+ f_1\|_\infty \leq (1-\alpha), \\
	\| (I_{p^2}-\PP_\infty) \Gamma^\ast  (\PP_\infty \Gamma^\ast \PP_\infty)^+  f_1 \|_\infty \leq (1-\alpha),\\
	\| (I_{p^2}-\PP_1 \PP_\infty) \Gamma^\ast  (\PP_\infty\PP_1 \Gamma^\ast \PP_1\PP_\infty)^+  f_1 \|_\infty \leq  (1-\alpha) \frac{\tau_w}{w_{(1,k)}},
\end{gather*}
where $\PP_1$ and $\PP_\infty$ are the orthogonal projections onto $\mathcal{M}_1 $ and $\mathcal{M}_\infty$, respectively. 

The term on the left-hand side of the third inequality is independent of the weight sequence $w$.  For the OSCAR weights (linearly decreasing from $1$ to $1/(2m-1)$; \citealp{OSCAR08}) one finds, using Appendix~\ref{app:slope} and Lemma~\ref{lem:D1},
$\tau_w/w_{(1,k)} = (1/(2m-1))/((2m-1)/m)= 2/(p(p-1))$, which is very stringent. However, we may maximize  the right hand side so that the irrepresentability condition is less restrictive. E.g. adopting instead the tuned weights (see Lemma \ref{lem:SLOPEtuning})
\begin{align}\label{eq:specialSLOPE}
w_i = \begin{cases}
    1, & i \leq \lfloor k/2\rfloor,\\
    2/3, & i = \lceil k/2\rceil \mbox{ and }k\mbox{ is odd}, \\
    1/3, & i > \lceil\frac{k}{2}\rceil,
\end{cases}
\end{align}
leads to $
\tau_w/w_{(1,k)}=(1/3)/(2/3)=1/2$.

Table~\ref{tab:graph_comparison_simple} reports the left--hand sides above for $p\in\{16,64\}$ and the four graph topologies.  The attainable $\alpha$ values for the $\ell_{\infty}$ and SLOPE models are comparable with those for GLASSO.

\begin{table}[htbp]
	\centering
	\begin{tabular}{l cccccc} 
		\toprule
$\| (I_{p^2}-\PP) \Gamma^\ast  (\PP \Gamma^\ast \PP)^+  f_1 \|_\infty$		& \multicolumn{3}{c}{$p = 16$} & \multicolumn{3}{c}{$p = 64$} \\
		\cmidrule(lr){2-4} \cmidrule(lr){5-7} 
		Graph Type       & $1$ & $\infty$ & $w$ & $1$ & $\infty$ & $w$ \\
		\midrule
		Chain graph      & 4.0e-01 & 6.5e-02 & 4.0e-01 & 4.0e-01 & 7.3e-02 & 4.0e-01 \\
		Hub graph        & 3.3e-01 & 4.6e-13 & 3.3e-01 & 7.9e-02 & 1.7e-12 & 7.9e-02 \\
		Grid             & 4.0e-01 & 2.1e-02 & 4.2e-01 &  4.0e-01 & 2.9e-02 & 4.1e-01 \\
		Dense graph & 1.0e-15 & 1.1e-12 & 1.1e-12 & 2.4e-15 & 1.6e-09  &  1.6e-09  \\
		\bottomrule
	\end{tabular}
	\caption{Irrepresentability conditions for different graph types and $p$ values.}
	\label{tab:graph_comparison_simple}
\end{table}

\subsection{Comparison with Ravikumar et al. (2011)}

Table~\ref{tab:graph_comparison_simple2} contrasts  our bound $\delta$ from Theorem \ref{cor:main} (in the case $\poly{\cdot}=\|\cdot\|_1$ and $\cnorm{\cdot}=\|\cdot\|_{\infty}$) with the threshold $\delta_R$ from \citet{ravikumar2011graphical} (recall \eqref{eq:goodDelta}). Across all scenarios, our bound exceeds that of \citet{ravikumar2011graphical} by several orders of magnitude, yielding markedly stronger guarantees for pattern recovery.

\begin{table}[htbp]
	\centering
	\begin{tabular}{l cccc} 
		\toprule
		& \multicolumn{2}{c}{$p = 16$} & \multicolumn{2}{c}{$p = 64$} \\
		\cmidrule(lr){2-3} \cmidrule(lr){4-5} 
		Graph Type       & $\delta$ & $\delta_{R}$ & $\delta$ & $\delta_{R}$ \\
		\midrule
		Chain graph      & 1.9e-03 & 1.6e-05 & 1.9e-03 & 1.6e-05 \\
		Hub graph        & 1.1e-03 & 2.3e-08 & 6.3e-04 & 1.6e-08 \\
		Grid             & 1.2e-03 & 1.0e-05 & 1.2e-03 & 9.4e-06 \\
		Dense graph & 1.4e-03 & 8.2e-07 & 1.1e-04 & 1.4e-09 \\
		\bottomrule
	\end{tabular}
	\caption{Comparison of $\delta$ and $\delta_{R}$ for different graph types and dimensions.}
\label{tab:graph_comparison_simple2}
\end{table}

\subsection{\texorpdfstring{Tightness of the threshold $\delta$}{Tightness of delta}}

For each graph topology we draw an error matrix $\mathcal{E}\in\mathrm{Sym}(p)$ in a way that $|\mathcal E|\sim\mathrm{Unif}(0,8\delta)$ and set $\hat\Sigma = \Sigma^{\ast}+\mathcal E$.  Using the optimal tuning parameter $\lambda$ from Theorem~\ref{cor:main}, we compute the estimator $\hat K$ with the penalty $\poly{\cdot}\in\{\|\cdot\|_1, \|\cdot\|_\infty, \|\cdot\|_w\}$.  Repeating this experiment $N=1\,000$ times we record the pairs $(\|\v{\hat{\Sigma}_i-\Sigma^{\ast}}\|_{\infty},\;\|\v{\hat{K}_i-K^{\ast}}\|_{\infty})$ and display the results as scatter plots.  
 We compare the theoretical $\delta$ with the empirical threshold
\[
\hat{\delta} = \min\{ \|\v{\hat{\Sigma}_i-\Sigma^{\ast}}\|_{\infty}\colon \patt{\hat{K}_i}=\patt{K^\ast}\}.
\]

In the case of GLASSO, we find that $\hat{\delta}$ exceeds the theoretical bound $\delta$ from Theorem \ref{cor:main} by only a factor of about $2$, see Figure \ref{fig:glasso}.

\begin{figure}[htbp]
	\includegraphics[width=0.8\textwidth]{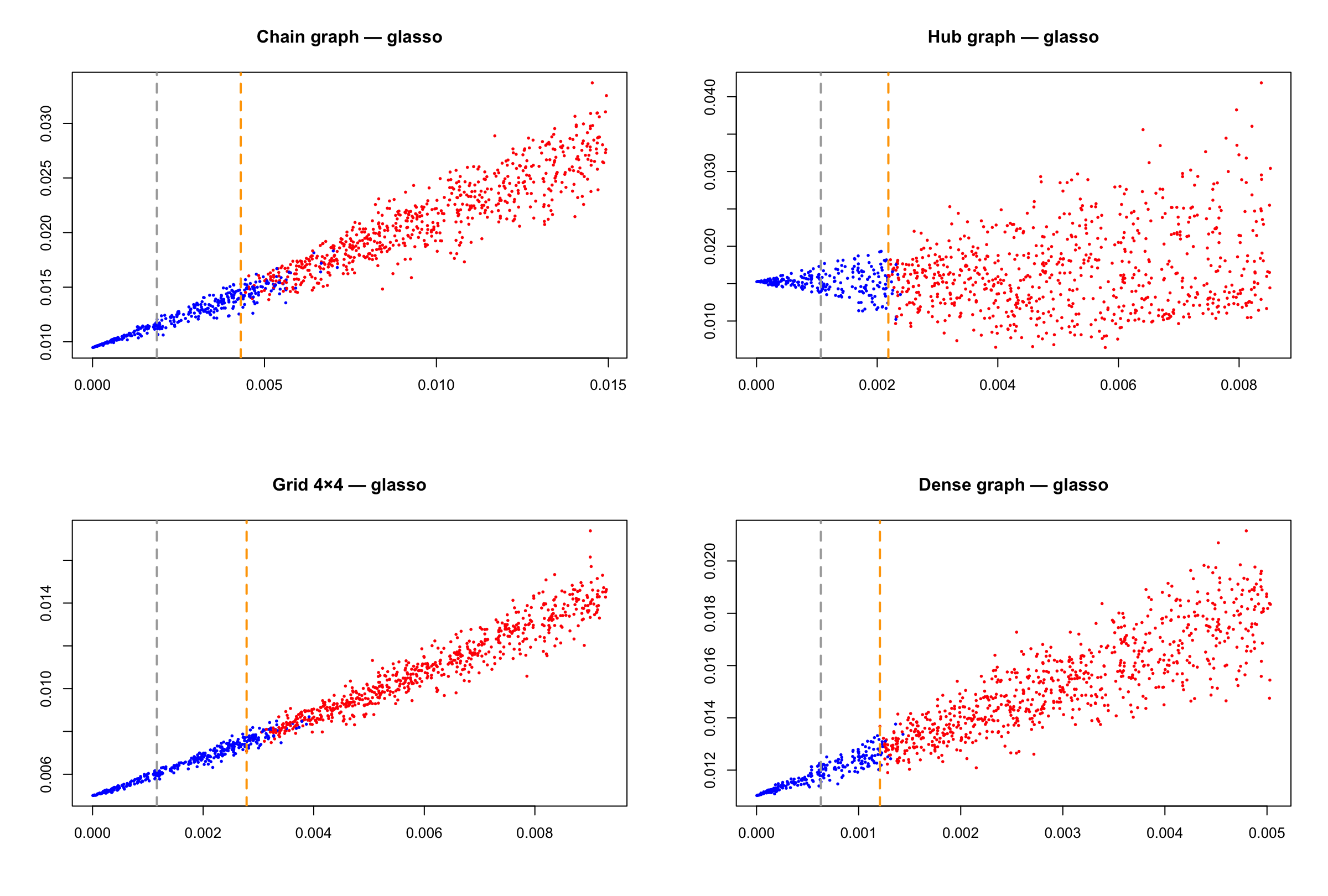}
\caption{GLASSO: Scatter plot of \(\|\v{\hat{\Sigma}-\Sigma^{\ast}}\|_{\infty}\) (x-axis) versus 
\(\|\v{\hat{K}-K^{\ast}}\|_{\infty}\) (y-axis) for $p=25$.  
The gray dashed line marks the theoretical threshold \(\delta\); the orange dashed line shows its empirical threshold \(\hat{\delta}\).  
Points are colored blue when the true pattern (sign) is recovered and red otherwise.}\label{fig:glasso}
\end{figure}

We carried out an analogous evaluation for the SLOPE penalty, using the weights specified in \eqref{eq:specialSLOPE}, and display the outcomes in Figure \ref{fig:slope}. The observed thresholds exceed the theoretical bounds by roughly a factor of $20$ on both chain and grid graphs, about $7$ on the hub graph, and near $4$ on the dense graph. This marked gap likely stems from our assessing deviations in the $\ell_\infty$ norm, which may not be the appropriate metric in this context. 

\begin{figure}[htbp]
	\includegraphics[width=0.8\textwidth]{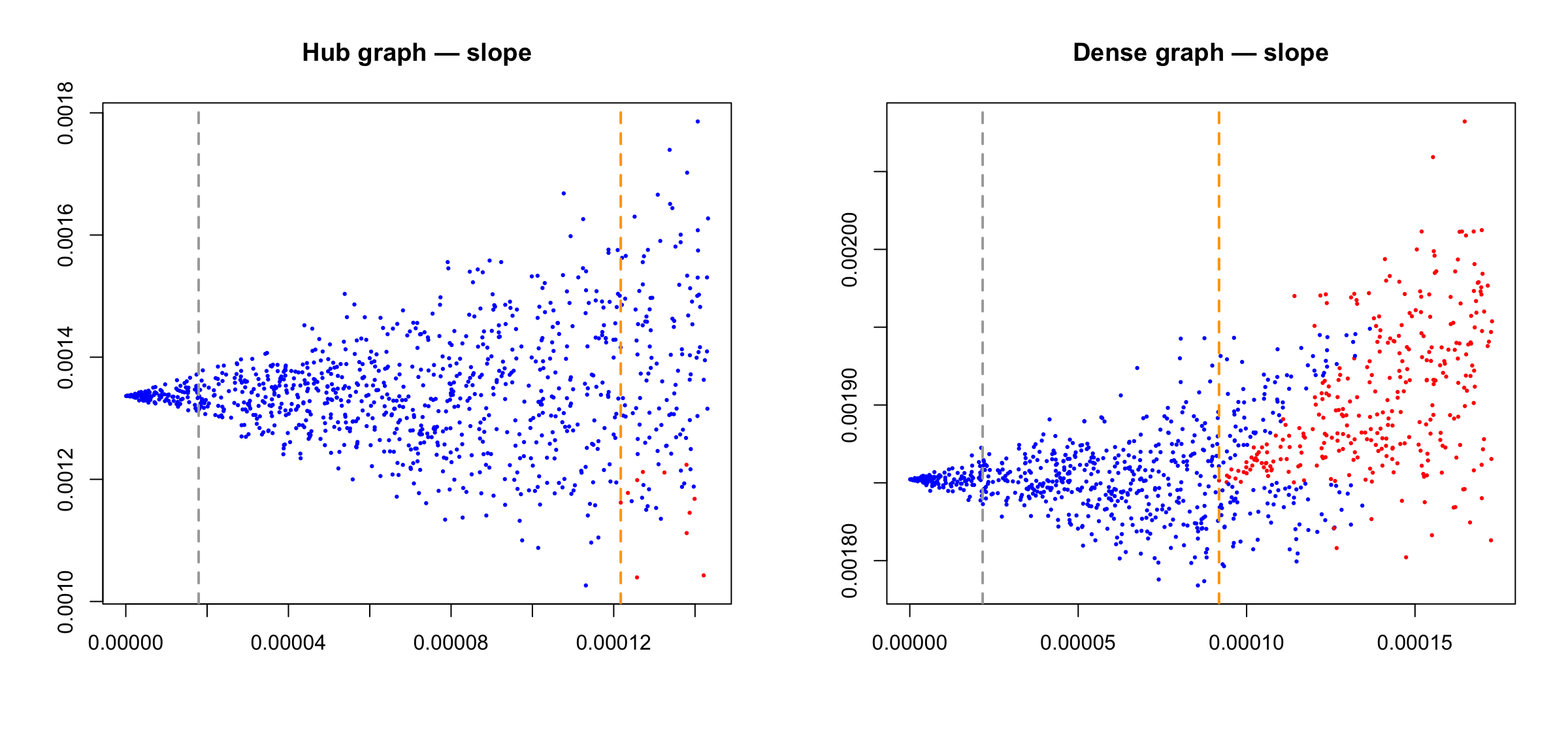}
\caption{SLOPE: Scatter plot of \(\|\v{\hat{\Sigma}-\Sigma^{\ast}}\|_{\infty}\) (x-axis) versus 
\(\|\v{\hat{K}-K^{\ast}}\|_{\infty}\) (y-axis) for $p=25$.  
The gray dashed line marks the theoretical threshold \(\delta\); the orange dashed line shows its empirical threshold \(\hat{\delta}\).  
Points are colored blue when the true pattern is recovered and red otherwise.}\label{fig:slope}
\end{figure}

We also explored the performance under the $\ell_\infty$-penalty, but there our theoretical guarantees proved overly conservative compared to simulation results for all graph topologies. We suspect this discrepancy arises because $\cnorm{\cdot}=\|\cdot\|_1$ fails to reflect the effective dimension reduction in the $\mathcal{S}_\infty$ subspace, causing the constant $\eta_\infty$ to be excessively large.

Our experiments were run using the GSLOPE implementation kindly provided by Prof. Ma{\l}gorzata Bogdan (\cite{graphSLOPE}).

\section{Roadmap to the Proofs}\label{sec:ProofSkeleton}
\subsection{Primal-dual witness method}\label{sec:pdwm}
We consider the original and the restricted problem:
\begin{align}\label{eq:hatK2}
	\hat K &= \argmin_{K\in \mathrm{Sym}_+(p)} \left\{ \tr{\hat{\Sigma}K}-\log\det(K)+\lambda \norm(K)\right\},\\
	\tilde K &=\argmin_{K\in \Mast\cap \mathrm{Sym}_+(p)} \left\{ \tr{\hat{\Sigma}K}-\log\det(K)+\lambda \norm(K)\right\}. \label{eq:tildeK}
\end{align}
Recall that $\norm$ is an atomic norm on $\mathrm{Sym}_0(p)$
(see Section \ref{sec:vectorization} for the definition of $\mathrm{Sym}_0(\R)$).
We define the corresponding restricted dual norm on $\Mast_0= \Mast\cap \mathrm{Sym}_0(p)$. Let $\norms = \norm\big|_{\Mast}$ and 
\[
\normsd(Y) = \sup\{ \scalar{Y}{X}\colon X\in\Mast_0, \norms(X)\leq 1\},\quad Y\in \Mast_0.
\]
It follows that $\normsd(Y) = \polydr{\vp{Y}}{\I^\ast}$ with $\I^\ast =\I_{\vp{K^\ast}}$. 

Let $\proj\colon \mathrm{Sym}(p)\to \Mast$ denote the orthogonal projection onto $\Mast$.
Define also the orthogonal projection matrix $\PM \in\mathrm{Sym}(p^2)$ such that for all $X\in\mathrm{Sym}(p)$, the following holds: 
\begin{align*}
	\v{\proj(X)}=\PM \v{X}.
\end{align*}
Note that we have $\vp{\proj(X)}=\PP_{\I^\ast} \vp{X}$ (recall Section \ref{sec:pattern}).

\begin{lemma}\label{lem:1}
	Assume that $\hat{\Sigma}$ has positive diagonal entries.
	\begin{itemize}
		\item[(i)] The optimization problem \eqref{eq:hatK2} has a unique solution $\hat{K}\in\mathrm{Sym}_+(p)$, which is characterized by
		$\hat{K}^{-1}-\hat{\Sigma} = \lambda\hat{\Pi}$, 
		where $\hat{\Pi}\in \partial \norm(\hat{K})$.
		\item[(ii)] The constrained optimization problem \eqref{eq:tildeK} has a unique solution 
		$\tilde{K}\in \Mast\cap\mathrm{Sym}_+(p)$, which is  characterized by $\proj(\tilde{K}^{-1}-\hat{\Sigma}) = \lambda\tilde\Pi$,
		where $\tilde\Pi\in \partial \norms(\tilde{K})$.
	\end{itemize}
\end{lemma}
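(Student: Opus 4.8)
\emph{Plan.} I would prove both parts by the same three steps --- strict convexity for uniqueness, a coercivity estimate for existence, and a first-order optimality condition on a (relatively) open convex domain --- treating (ii) as the restriction of (i) to the subspace $\Mast$. For \emph{uniqueness}: $K\mapsto-\log\det K$ is strictly convex on $\mathrm{Sym}_+(p)$, $K\mapsto\tr{\hat\Sigma K}$ is linear, and $K\mapsto\lambda\norm(K)$ is convex, so the objective in \eqref{eq:hatK2} is strictly convex on the convex set $\mathrm{Sym}_+(p)$ and has at most one minimizer; restricting a strictly convex function to the convex set $\Mast\cap\mathrm{Sym}_+(p)$ keeps it strictly convex, so the same holds for \eqref{eq:tildeK}. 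I would also record that $\Mast\cap\mathrm{Sym}_+(p)\neq\emptyset$ since it contains both $K^\ast$ and $I_p$ (indeed $\vp{I_p}=0\in\Sast$).

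\emph{Existence.} Writing $g(K)=\tr{\hat\Sigma K}-\log\det K+\lambda\norm(K)$ and recalling $\hat\Sigma\succeq0$, the key claim is the linear lower bound: there is $\rho>0$ with $\tr{\hat\Sigma K}+\lambda\norm(K)\geq\rho\,\|K\|_2$ for all $K\succeq0$. Since both summands are nonnegative and positively homogeneous of degree one, by compactness of $\{K\succeq0:\|K\|_2=1\}$ it suffices to exclude $K\succeq0$ with $\|K\|_2=1$ and $\norm(K)=\tr{\hat\Sigma K}=0$; but $\norm(K)=2\poly{\vp{K}}=0$ forces $\vp{K}=0$ (the gauge $\poly{\cdot}$ vanishes only at the origin, as $0\in\mathrm{int}(\mathrm{conv}(\mathcal{A}))$), hence $K$ is diagonal with $K_{kk}=1$ for some $k$, and then $\tr{\hat\Sigma K}=\sum_i\hat\Sigma_{ii}K_{ii}\geq\hat\Sigma_{kk}>0$, a contradiction. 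Using $\det K\leq\|K\|_2^{p}$ this gives $g(K)\geq\rho\|K\|_2-p\log\|K\|_2\to\infty$ as $\|K\|_2\to\infty$, while $g(K)\geq-\log\det K\to\infty$ as $K$ approaches a singular matrix. Hence any nonempty sublevel set $\{K\succ0:g(K)\leq c\}$ has $\|K\|_2$ bounded above and $\det K$ bounded below, so its eigenvalues lie in a fixed compact subinterval of $(0,\infty)$; being a closed subset of a compact subset of $\mathrm{Sym}_+(p)$ on which $g$ is continuous, it contains a minimizer of $g$, which is therefore global. For \eqref{eq:tildeK} I would run the same argument inside $\Mast$, the contradiction step being unchanged since a diagonal matrix belongs to $\Mast$.

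\emph{Optimality conditions.} As $\mathrm{Sym}_+(p)$ is open in $\mathrm{Sym}(p)$ and $g$ is convex, $\hat K$ minimizes $g$ iff $0\in\partial g(\hat K)$; the smooth summands contribute their gradients $\hat\Sigma$ and $-\hat K^{-1}$, and $\lambda\norm$ contributes $\lambda\,\partial\norm(\hat K)$, so this rearranges to $\hat K^{-1}-\hat\Sigma=\lambda\hat\Pi$ with $\hat\Pi\in\partial\norm(\hat K)$, which is (i). For (ii), $\Mast\cap\mathrm{Sym}_+(p)$ is relatively open in the subspace $\Mast$, so $0$ lies in the subdifferential (within $\Mast$) of $g|_{\Mast}$ at $\tilde K$; since the $\Mast$-gradients of $\tr{\hat\Sigma\cdot}$ and $-\log\det$ are $\proj(\hat\Sigma)$ and $-\proj(\tilde K^{-1})$, and $\partial\norms(\tilde K)=\proj\bigl(\partial\norm(\tilde K)\bigr)$ because $\norms=\norm|_{\Mast}$, this rearranges (using linearity of $\proj$) to $\proj(\tilde K^{-1}-\hat\Sigma)=\lambda\tilde\Pi$ with $\tilde\Pi\in\partial\norms(\tilde K)$, which is (ii).

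I expect the existence step to be the main obstacle: the bare term $\tr{\hat\Sigma K}-\log\det K$ is not coercive on $\mathrm{Sym}_+(p)$ when $\hat\Sigma$ is rank-deficient (e.g.\ $n<p$), so existence genuinely relies on pairing the penalty with the positive-diagonal hypothesis on $\hat\Sigma$ to extract the linear lower bound above; once that is in hand, the remaining steps are routine convex analysis.
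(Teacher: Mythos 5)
Your proof is correct and follows essentially the same route as the paper's: strict convexity of $-\log\det$ for uniqueness, coercivity for existence, and first-order optimality conditions (projected onto $\Mast$ in part (ii)) for the characterization. The only real difference is the coercivity step, where the paper simply invokes Hadamard's inequality following Ravikumar et al.'s Lemma 3, whereas you derive the linear lower bound $\tr{\hat\Sigma K}+\lambda\norm(K)\geq\rho\|K\|_2$ via a homogeneity/compactness argument that correctly isolates where the positive-diagonal hypothesis on $\hat\Sigma$ and the definiteness of the gauge are used --- a worthwhile elaboration given that the penalty here is a general polyhedral gauge rather than the $\ell_1$-norm.
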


To establish that the original estimator $\hat{K}$ belongs to the true model subspace $\Mast$, it suffices to prove that $\hat{K}=\tilde{K}$. In Lemma \ref{lem:1}, we define
\[
\hat{\Pi} = \frac{1}{\lambda}(\hat{K}^{-1}-\hat{\Sigma})\quad\mbox{and}\quad \tilde{\Pi} = \frac{1}{\lambda}\PM(\tilde{K}^{-1}-\hat{\Sigma}). 
\]
Let additionally
\begin{align*}
	\Pi = \frac{1}{\lambda}(\tilde{K}^{-1}-\hat{\Sigma}).
\end{align*} 
The next result establishes a condition under which $\Pi=\hat{\Pi}$; this, in turn, implies that $\hat{K}=\tilde{K}$, thereby ensuring that $\hat{K}$ belongs to the true pattern subspace $\Mast$.

\begin{lemma}\label{lem:hatKtildeK}
	We have $\normd(\Pi)\leq 1$ if and only if $\hat{K}=\tilde{K}$. 
\end{lemma}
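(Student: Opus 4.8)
The plan is to read this as a primal–dual feasibility statement: $\Pi$ is a candidate dual certificate built from the restricted solution $\tilde K$, and $\normd(\Pi)\le 1$ is precisely the extra ingredient needed to promote $\tilde K$ to a solution of the unrestricted problem \eqref{eq:hatK2}. Throughout I would use the standard subdifferential calculus for a gauge $g$ with polar $g^\circ$: for $x$ in the domain, $y\in\partial g(x)$ iff $g^\circ(y)\le 1$ and $\langle y,x\rangle=g(x)$; in particular every subgradient lies in the polar unit ball, and (by testing the subgradient inequality at $0$ and at $2x$ and invoking positive homogeneity) one always has $\langle y,x\rangle=g(x)$ for $y\in\partial g(x)$. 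I would apply this to $g=\norm$ (polar $\normd$) and to $g=\norms$ (polar $\normsd$), noting also that a subgradient of a function that ignores the diagonal of a symmetric matrix must itself be zero-diagonal.

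Before the equivalence itself I would record the small fact that $\Pi\in\mathrm{Sym}_0(p)$, so that $\normd(\Pi)$ is even meaningful: Lemma \ref{lem:1}(ii) gives $\proj(\lambda\Pi)=\proj(\tilde K^{-1}-\hat\Sigma)=\lambda\tilde\Pi$, so $\proj(\Pi)=\tilde\Pi\in\Mast_0$ has zero diagonal; since $\Mast$ contains every diagonal matrix, $\proj$ fixes the diagonal part of $\mathrm{Sym}(p)$, hence $\Pi$ and $\proj(\Pi)$ share a diagonal, which is zero.

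For the ``only if'' direction, if $\hat K=\tilde K$ then $\lambda\Pi=\tilde K^{-1}-\hat\Sigma=\hat K^{-1}-\hat\Sigma=\lambda\hat\Pi$, so $\Pi=\hat\Pi\in\partial\norm(\hat K)$ by Lemma \ref{lem:1}(i), and therefore $\normd(\Pi)\le 1$. For the ``if'' direction, assuming $\normd(\Pi)\le 1$, I would verify that $\tilde K$ satisfies the optimality conditions of \eqref{eq:hatK2} from Lemma \ref{lem:1}(i) and then invoke uniqueness of that solution. Feasibility ($\tilde K\in\mathrm{Sym}_+(p)$) and the stationarity equation ($\tilde K^{-1}-\hat\Sigma=\lambda\Pi$) are immediate from $\tilde K\in\Mast\cap\mathrm{Sym}_+(p)$ and the definition of $\Pi$; what remains is $\Pi\in\partial\norm(\tilde K)$, i.e. $\normd(\Pi)\le 1$ (assumed) together with $\langle\Pi,\tilde K\rangle=\norm(\tilde K)$. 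For the pairing I would split $\Pi=\proj(\Pi)+(I-\proj)(\Pi)=\tilde\Pi+(I-\proj)(\Pi)$; since $\tilde K\in\Mast$ and $(I-\proj)(\Pi)\perp\Mast$, the cross term drops and $\langle\Pi,\tilde K\rangle=\langle\tilde\Pi,\tilde K\rangle$. Because $\tilde\Pi\in\partial\norms(\tilde K)$, the subgradient identity gives $\langle\tilde\Pi,\tilde K\rangle=\norms(\tilde K)=\norm(\tilde K)$ (as $\tilde K\in\Mast$), which is exactly what is needed. Hence $\Pi\in\partial\norm(\tilde K)$, so $\tilde K$ solves \eqref{eq:hatK2}, and $\tilde K=\hat K$.

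The only genuinely delicate part is not a computation but careful bookkeeping: (a) confirming $\Pi$ lives in $\mathrm{Sym}_0(p)$ so that $\normd(\Pi)$ is defined, and (b) handling the two halves of the gauge-subdifferential characterization correctly (the polar bound and the exactness $\langle y,x\rangle=g(x)$), since $\poly{\cdot}$ — and hence $\norm$ — is only assumed to be a gauge, not necessarily a norm. Once these are in place, the equivalence is a direct KKT comparison and I do not anticipate any hidden obstruction.
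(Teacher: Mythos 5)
Your proof is correct and follows essentially the same route as the paper: both directions reduce to the gauge-subdifferential characterization (the paper's Lemma \ref{lem:subd}), the identity $\scalar{\Pi}{\tilde K}=\scalar{\tilde\Pi}{\tilde K}$ coming from orthogonality of $(I-\proj)(\Pi)$ to $\Mast$, and uniqueness of the solution to \eqref{eq:hatK2}. Your additional bookkeeping that $\Pi$ is zero-diagonal (via $\proj$ fixing diagonals and $\tilde\Pi$ being a subgradient of a diagonal-blind gauge) is a small refinement the paper leaves implicit, but the argument is the same.
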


\subsection{The basic bounds}\label{sec:bb}
Next, we find a sufficient condition under which we have $\normd(\Pi)\leq 1$. This condition will be formulated in terms of a projection $\Q$, recall its definition from Section \ref{sec:notation}.

For $\Delta\in \Mast$ such that $K^\ast+\Delta$ is invertible,  define the residual matrix
\[
R(\Delta) = (K^\ast+\Delta)^{-1}-\Sigma^\ast+\Sigma^\ast\Delta\Sigma^\ast. 
\]
We note that $R(\tilde{K}-K^\ast)$ is the difference of  $\nabla(-\log\det(\tilde K)) = \tilde K^{-1}$ from its first-order Taylor expansion around $K^\ast$. 

Recall the definition of $\tau_\diamond$ in Definition \ref{def:tau}. We obtain
\begin{lemma}\label{lem:Vbound}
	Assume that $\patt{\tilde{K}}=\patt{K^\ast}$.
	If there exists $\alpha\in (0,1)$ such that \eqref{eq:irrep0} holds
	and
	\[
	\cnorm{ \v{R(\tilde{K}-K^\ast)}}+\cnorm{\v{\hat{\Sigma}-\Sigma^\ast}}\leq \frac{\alpha\,\lambda\, \tau_\diamond(\I^\ast)}{\copnorm{I_{p^2}-\Q}},
	\]
	then $\normd(\Pi) \leq 1$.
\end{lemma}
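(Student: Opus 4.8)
The plan is to work from the KKT characterization in Lemma \ref{lem:1} and reduce the claim $\normd(\Pi)\le 1$, i.e. $\polyd{\vp{\Pi}}\le 1$, to a bound on the perpendicular part of $\vp{\Pi}$ via the threshold $\tau_\diamond$. First I would use the definition $\lambda\Pi = \tilde K^{-1}-\hat\Sigma$ and write $\tilde K^{-1} = \Sigma^\ast - \Sigma^\ast(\tilde K-K^\ast)\Sigma^\ast + R(\tilde K-K^\ast)$ from the definition of $R$, so that
\[
\lambda\Pi = (\Sigma^\ast-\hat\Sigma) - \Sigma^\ast(\tilde K-K^\ast)\Sigma^\ast + R(\tilde K-K^\ast).
\]
Applying $\v{\cdot}$ and using \eqref{eq:vectorization}, the middle term becomes $-\Gamma^\ast\v{\tilde K-K^\ast}$ with $\tilde K-K^\ast\in\Mast$. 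Next I would use the restricted KKT condition $\proj(\tilde K^{-1}-\hat\Sigma)=\lambda\tilde\Pi$ with $\tilde\Pi\in\partial\norms(\tilde K)$: since $\patt{\tilde K}=\patt{K^\ast}$, we have $\vp{\tilde\Pi}$ lies in the face $F_{\I^\ast}$ and its projection $\PP_{\I^\ast}\vp{\tilde\Pi}=f_{\I^\ast}$, hence $\PM\v{\Pi}$ corresponds (via $D$) to $Df_{\I^\ast}$ up to the diagonal-part bookkeeping built into the framework. The key algebraic step is then to split $\v{\Pi}$ as $\PM\v{\Pi} + (I_{p^2}-\PM)\v{\Pi}$ and apply $\Q=\Gamma^\ast(\Gamma^\ast_{1,1})^+$ to the identity for $\lambda\v{\Pi}$: because $\Q\PM=\PM$ and $\Q\Gamma^\ast_{1,1}=\Gamma^\ast\PM$, multiplying the vectorized identity by $\Q$ and by $(I_{p^2}-\Q)$ isolates the perpendicular component as
\[
(I_{p^2}-\PM)\v{\lambda\Pi} = (\Gamma^\ast_{2,1}(\Gamma^\ast_{1,1})^+)\big(\lambda D f_{\I^\ast}\big) + (I_{p^2}-\Q)\v{\hat\Sigma-\Sigma^\ast} - (I_{p^2}-\Q)\v{R(\tilde K-K^\ast)},
\]
using $\Q\v{R}=\v{R}$-type cancellations that hold because $R(\tilde K-K^\ast)\in\Mast$ acts through $\Gamma^\ast_{1,1}$ — this is exactly the reduction underlying the definition of $\Q-\PM=\Gamma^\ast_{2,1}(\Gamma^\ast_{1,1})^+$.

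Now I would estimate the three terms in $\cnorm{\cdot}$. The first is bounded by the irrepresentability condition \eqref{eq:irrep0}: $\cnorm{\Gamma^\ast_{2,1}(\Gamma^\ast_{1,1})^+ D f_{\I^\ast}}\le (1-\alpha)\tau_\diamond(\I^\ast)$, giving $\le \lambda(1-\alpha)\tau_\diamond(\I^\ast)$ after the factor $\lambda$. The remaining two terms are controlled by the operator norm $\copnorm{I_{p^2}-\Q}$ times $\cnorm{\v{\hat\Sigma-\Sigma^\ast}}$ and $\cnorm{\v{R(\tilde K-K^\ast)}}$ respectively; by hypothesis their sum is at most $\alpha\lambda\tau_\diamond(\I^\ast)/\copnorm{I_{p^2}-\Q}$, so $\copnorm{I_{p^2}-\Q}$ cancels and this contributes $\le \alpha\lambda\tau_\diamond(\I^\ast)$. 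Dividing by $\lambda$, we get
\[
\cnorm{D\,(I_{p^2}-\PP_{\I^\ast})\vp{\Pi}} \;\le\; (1-\alpha)\tau_\diamond(\I^\ast) + \alpha\tau_\diamond(\I^\ast) \;=\; \tau_\diamond(\I^\ast),
\]
i.e. the perpendicular part $\pi^\bot=(I_m-\PP_{\I^\ast})\vp{\Pi}\in\Sast^\bot$ satisfies $\cnorm{D\pi^\bot}\le \tau_\diamond(\I^\ast)$. Since the parallel part $\PP_{\I^\ast}\vp{\Pi}$ equals the face projection $f_{\I^\ast}$ (from the restricted KKT condition and $\patt{\tilde K}=\patt{K^\ast}$), Definition \ref{def:tau} of $\tau_\diamond$ yields $\polyd{f_{\I^\ast}+\pi^\bot}=\polyd{\vp{\Pi}}=\normd(\Pi)\le 1$, which is the claim; Lemma \ref{lem:tau_positive} together with $\tau_\diamond(\I^\ast)>0$ (implicit in \eqref{eq:irrep0}) ensures the relevant quantities are well-defined.

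I expect the main obstacle to be the careful bookkeeping between $\R^m$ (half-vectorization) and $\R^{p^2}$ (full vectorization): making precise that $\PP_{\I^\ast}\vp{\Pi}=f_{\I^\ast}$ requires unpacking how the restricted KKT condition $\proj(\tilde K^{-1}-\hat\Sigma)=\lambda\tilde\Pi$ interacts with $\PM$ versus $\PP_{\I^\ast}$ and with the unpenalized diagonal, and the identity for $(I_{p^2}-\PM)\v\Pi$ must be derived cleanly using the Moore–Penrose properties of $\Gamma^\ast_{1,1}$ (in particular that $\v{R(\tilde K-K^\ast)}$ and $\Gamma^\ast\v{\tilde K-K^\ast}$ are killed by $I_{p^2}-\Q$ on the range side but not the perpendicular side). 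Once those projector identities are pinned down, the norm estimates are immediate from \eqref{eq:irrep0} and the hypothesis.
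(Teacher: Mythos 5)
Your proposal is correct and follows essentially the same route as the paper's proof: decompose $\lambda\v{\Pi}=\v{R}+\v{\Sigma^\ast-\hat\Sigma}-\Gamma^\ast\v{\tilde K-K^\ast}$, use $(I_{p^2}-\Q)\Gamma^\ast\PM=0$ to kill the $\Gamma^\ast\v{\tilde K-K^\ast}$ term, identify $\PM\v{\Pi}=\v{\tilde\Pi}=Df_{\I^\ast}$ from $\patt{\tilde K}=\patt{K^\ast}$, bound the perpendicular component by $(1-\alpha)\tau_\diamond(\I^\ast)+\alpha\tau_\diamond(\I^\ast)$ via the irrepresentability condition and the hypothesis, and conclude with Definition \ref{def:tau}. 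Two small blemishes, neither fatal: the signs of the last two terms in your displayed identity for $(I_{p^2}-\PM)\v{\lambda\Pi}$ are flipped (immaterial once you pass to norms), and the parenthetical justification ``$\Q\v{R}=\v{R}$-type cancellations \ldots because $R(\tilde K-K^\ast)\in\Mast$'' is false --- $R(\Delta)$ does not lie in $\Mast$ and no such cancellation occurs (if it did, the term $(I_{p^2}-\Q)\v{R}$ in your own identity would vanish, which it does not). The only cancellation actually needed is $(I_{p^2}-\Q)\Gamma^\ast\PM=0$ applied to $\Sigma^\ast(\tilde K-K^\ast)\Sigma^\ast$; the residual term survives into the perpendicular part and is controlled, exactly as you then do, by $\copnorm{I_{p^2}-\Q}\cnorm{\v{R}}$.
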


While the term $\cnorm{\v{\hat{\Sigma}-\Sigma^\ast}}$,      
is stochastic and cannot be directly controlled, standard concentration inequalities ensure it can be made arbitrarily small with high probability by increasing the sample size $n$. The term $\cnorm{ \v{R(\tilde{K}-K^\ast)}}$ will be handled using the following result.

\begin{lemma}[Control of the residual term]\label{lem:Rbound}
	If $\Delta\in\Mast$ satisfies 
    \[
	\cH\cnorm{\Gamma^\ast\v{\Delta}}\leq r <1,\qquad\mbox{then}\qquad 
		\cnorm{\v{R(\Delta)}} \leq  \frac{1}{\cH}\frac{r^2}{1-r}.
	\]
\end{lemma}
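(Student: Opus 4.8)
The plan is to expand the matrix inverse $(K^\ast+\Delta)^{-1}$ as a Neumann series around $K^\ast$ and identify $R(\Delta)$ with the tail of that series. Writing $K^\ast+\Delta = (K^\ast)^{1/2}(I_p + W)(K^\ast)^{1/2}$ with $W = (K^\ast)^{-1/2}\Delta(K^\ast)^{-1/2}$ — or, more in the spirit of the paper, keeping the asymmetric form $K^\ast+\Delta = K^\ast(I_p + \Sigma^\ast\Delta)$ — one gets, whenever the relevant operator norm of the perturbation is $<1$,
\[
(K^\ast+\Delta)^{-1} = \Sigma^\ast - \Sigma^\ast\Delta\Sigma^\ast + \sum_{k=2}^\infty (-1)^k \Sigma^\ast(\Delta\Sigma^\ast)^k.
\]
Hence $R(\Delta) = \sum_{k\ge 2}(-1)^k\Sigma^\ast(\Delta\Sigma^\ast)^k$. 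The first two steps are therefore: (i) justify convergence of this series, which needs a sub-multiplicative operator norm in which $\Delta\Sigma^\ast$ (equivalently $\Sigma^\ast\Delta$) has norm $<1$; and (ii) vectorize, using \eqref{eq:vectorization}, to rewrite each term $\v{\Sigma^\ast(\Delta\Sigma^\ast)^k}$ as $(\Sigma^\ast\otimes\Sigma^\ast)$ acting on something, so that the hypothesis $\cH\cnorm{\Gamma^\ast\v{\Delta}}\le r$ can be fed in.

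The key bookkeeping step is to convert $\cnorm{\Gamma^\ast\v{\Delta}}$ into control of the operator-norm quantity that governs the geometric series. This is exactly what the constant $\cH$ in Definition \ref{def:c} is for: $\copnorm{\Sigma^\ast\Delta\otimes I_p}\le \cH\,\cnorm{\Gamma^\ast\v{\Delta}}\le r<1$ for $\Delta\in\Mast$. Using $\v{\Sigma^\ast(\Delta\Sigma^\ast)^k} = (\Sigma^\ast\otimes\Sigma^\ast)\,\v{\Delta(\Sigma^\ast\Delta)^{k-1}}$ and iterating \eqref{eq:vectorization} to peel off factors of $\Sigma^\ast\Delta\otimes I_p$, one obtains
\[
\cnorm{\v{\Sigma^\ast(\Delta\Sigma^\ast)^k}} \le \copnorm{\Sigma^\ast\Delta\otimes I_p}^{\,k-1}\,\cnorm{(\Sigma^\ast\otimes\Sigma^\ast)\v{\Delta}} \le r^{k-1}\cdot\frac{r}{\cH},
\]
where the last factor uses $\cnorm{\Gamma^\ast\v{\Delta}}\le r/\cH$ once more. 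Here one must be slightly careful that $\Delta(\Sigma^\ast\Delta)^{k-1}\in\Mast$ so the bound $\copnorm{\Sigma^\ast\Delta\otimes I_p}$ (which is only asserted for arguments in $\Mast$) applies at each peeling step — but this is immediate since $\Delta\in\Mast$ and $\Mast$ is the pattern subspace, with these products staying in $\mathrm{Sym}(p)$; if needed one writes the series in terms of the self-adjoint conjugated perturbation to sidestep the symmetry bookkeeping entirely.

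Summing the geometric series over $k\ge 2$ then gives
\[
\cnorm{\v{R(\Delta)}} \le \frac{r}{\cH}\sum_{k\ge 2} r^{k-1} = \frac{r}{\cH}\cdot\frac{r}{1-r} = \frac{1}{\cH}\frac{r^2}{1-r},
\]
which is the claim. The main obstacle I anticipate is step (ii): making the repeated application of the Kronecker identity \eqref{eq:vectorization} interlock cleanly with the \emph{restricted} operator-norm hypothesis — i.e., checking that every intermediate matrix to which $\copnorm{\Sigma^\ast\,\cdot\,\otimes I_p}$ is applied genuinely lies in the admissible class, and that the norm $\cnorm{\cdot}$ on $\R^{p^2}$ interacts sub-multiplicatively with these Kronecker factors in the way the telescoping requires. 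Everything else (Neumann convergence, geometric summation) is routine.
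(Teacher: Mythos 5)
Your proof is correct and essentially the same as the paper's: the paper first derives the closed-form identity $R(\Delta)=\Sigma^\ast\Delta\Sigma^\ast\Delta\Sigma^\ast(I_p+\Delta\Sigma^\ast)^{-1}$, vectorizes via \eqref{eq:vectorization}, and then bounds the inverse factor by exactly the geometric series you sum term by term, using the same two inputs $\copnorm{\Sigma^\ast\Delta\otimes I_p}\le r$ and $\cnorm{\Gamma^\ast\v{\Delta}}\le r/\cH$. The obstacle you flag at the end is moot: the only operator whose $\copnorm{\cdot}$-norm is ever invoked is the single fixed map $\Sigma^\ast\Delta\otimes I_p$ with the original $\Delta\in\Mast$ (its operator norm is taken over all of $\R^{p^2}$, with no restriction on the arguments), so Definition \ref{def:c} applies directly at every peeling step.
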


\subsection{Fixed-point argument}\label{sec:fpa}
To obtain control over the difference $\tilde{\Delta}=\tilde{K}-K^\ast$, we employ a similar approach as in \cite{ravikumar2011graphical}. 
In this section,
we define a continuous mapping $F$ that admits the unique fixed point
\[
\Gamma^\ast \v{\tilde{K}-K^\ast}. 
\]
Next, we will show that $F$ is a self-map on a particular convex set. Consequently, Brouwer's fixed point theorem guarantees that this fixed point lies within that convex set.

Define the function $F\colon  \{ \Gamma^\ast\v{\Delta}\colon \Delta\in\Mast, \,\opnorm{\Sigma^\ast\Delta}<1 \} \to \Gamma^\ast\v{\Mast}$ by
\[
F\left(\Gamma^\ast\,\v{\Delta}\right) = \Q\,\v{(K^\ast+\Delta)^{-1}-\tilde{K}^{-1}} + \Gamma^\ast\,\v{\Delta}.
\]

We have $K^\ast+\Delta=K^\ast(I_p+ \Sigma^\ast\Delta)$ and since $\opnorm{\Sigma^\ast\Delta}<1$, the matrix $(I_p+ \Sigma^\ast\Delta)$ is invertible. Hence, $K^\ast+\Delta$ remains positive definite, ensuring that $F$ is well-defined. Here, $\opnorm{\cdot}$ can be arbitrary operator norm. 

\begin{lemma}\label{lem:Ffixed}
	$\Gamma^\ast\v{\tilde{\Delta}}$ is a fixed point of $F$ if and only if $\tilde\Delta = \tilde{K} - K^\ast$.
\end{lemma}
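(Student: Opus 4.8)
The plan is to verify the stated equivalence by a direct computation from the definition of $F$, reducing everything to the KKT characterization of the restricted problem given in Lemma~\ref{lem:1}(ii). First I would unpack the fixed-point equation: assuming $\Gamma^\ast\v{\tilde{\Delta}} = F(\Gamma^\ast\v{\tilde{\Delta}})$ with $\tilde\Delta\in\Mast$ and $\opnorm{\Sigma^\ast\tilde\Delta}<1$, subtracting $\Gamma^\ast\v{\tilde\Delta}$ from both sides gives
\[
\Q\,\v{(K^\ast+\tilde\Delta)^{-1}-\tilde{K}^{-1}} = 0.
\]
Now I would use that $\Q = \Gamma^\ast(\Gamma^\ast_{1,1})^+$ and that, by construction, $\Q$ restricted to $\v{\Mast}$ is injective (indeed $\Q\PM = \PM + \Gamma^\ast_{2,1}(\Gamma^\ast_{1,1})^+\PM$, and on the range of $\PM$ the map $\Gamma^\ast_{1,1}(\Gamma^\ast_{1,1})^+$ acts as the identity, so $\Q$ has trivial kernel on $\v{\Mast}$). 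Since $(K^\ast+\tilde\Delta)^{-1}-\tilde K^{-1}$ need not a priori lie in $\Mast$, the cleaner route is to note that $\ker \Q = \ker\Gamma^\ast_{1,1} = (\v{\Mast})^\perp$ in the relevant sense, so $\Q v = 0$ forces $\PM v = 0$, i.e. $\proj\bigl((K^\ast+\tilde\Delta)^{-1}-\tilde K^{-1}\bigr) = 0$, and hence $\proj\bigl((K^\ast+\tilde\Delta)^{-1}\bigr) = \proj(\tilde K^{-1})$.

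Next I would compare this with the optimality condition for $\tilde K$. By Lemma~\ref{lem:1}(ii), $\tilde K$ is the unique minimizer over $\Mast\cap\mathrm{Sym}_+(p)$ of $K\mapsto \tr{\hat\Sigma K} - \log\det K + \lambda\norm(K)$, characterized by $\proj(\tilde K^{-1} - \hat\Sigma) = \lambda\tilde\Pi$ with $\tilde\Pi\in\partial\norms(\tilde K)$. I want to show that $K^\ast + \tilde\Delta$ satisfies the same characterization, which by uniqueness forces $K^\ast+\tilde\Delta = \tilde K$, i.e. $\tilde\Delta = \tilde K - K^\ast$. Set $K' = K^\ast+\tilde\Delta$; it lies in $\Mast$ (since $\tilde\Delta\in\Mast$ and $K^\ast\in\Mast$) and is positive definite (as noted in the excerpt, $\opnorm{\Sigma^\ast\tilde\Delta}<1$ guarantees this). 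From $\proj((K')^{-1}) = \proj(\tilde K^{-1})$ we get $\proj((K')^{-1}-\hat\Sigma) = \proj(\tilde K^{-1}-\hat\Sigma) = \lambda\tilde\Pi$. The only remaining point is that $\tilde\Pi$, which lies in $\partial\norms(\tilde K)$, also lies in $\partial\norms(K')$; but this is exactly what is needed to close the argument, and here I would invoke the pattern information — indeed the subdifferential $\partial\norms$ is constant on each pattern class, so it suffices that $\patt{K'} = \patt{\tilde K}$. This is the step I expect to be the main obstacle: establishing that the fixed point actually has the same pattern as $\tilde K$, or alternatively circumventing it by working directly with the strict convexity of the objective on $\Mast\cap\mathrm{Sym}_+(p)$ rather than with subdifferentials.

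To handle this cleanly I would instead argue via strict convexity: the restricted objective $g(K) = \tr{\hat\Sigma K} - \log\det K + \lambda\norm(K)$ is strictly convex on the convex set $\Mast\cap\mathrm{Sym}_+(p)$ (the $-\log\det$ term is strictly convex), so it has at most one stationary point, and $\tilde K$ is it. The condition $\proj((K')^{-1} - \hat\Sigma) \in \lambda\,\partial\norms(K')$ is precisely the first-order stationarity condition for $g$ at $K'$ within $\Mast$. We have shown $\proj((K')^{-1}-\hat\Sigma) = \lambda\tilde\Pi$; since $\tilde\Pi\in\partial\norms(\tilde K)$ and $\norms$ is convex, $\tilde\Pi\in\partial\norms(\tilde K)$ means $\norms(K') \geq \norms(\tilde K) + \scalar{\tilde\Pi}{K'-\tilde K}$, and combined with the corresponding inequality from the $-\log\det$ and trace parts one deduces $g(K')\leq g(\tilde K)$, hence $K' = \tilde K$ by uniqueness. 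For the converse direction — that $\tilde\Delta = \tilde K - K^\ast$ is a fixed point — I would simply substitute: $F(\Gamma^\ast\v{\tilde K - K^\ast}) = \Q\v{(\tilde K)^{-1} - (\tilde K)^{-1}} + \Gamma^\ast\v{\tilde K - K^\ast} = \Gamma^\ast\v{\tilde K - K^\ast}$, which is immediate and requires only that $\tilde K = K^\ast + (\tilde K - K^\ast)$ is positive definite so that $F$ is defined there — true since $\tilde K\in\mathrm{Sym}_+(p)$.
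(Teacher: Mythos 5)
Your reduction of the fixed-point equation is correct and is essentially the paper's: from $\Q\,\v{(K^\ast+\tilde\Delta)^{-1}-\tilde K^{-1}}=0$ and $\ker\Q=\ker\Gamma^\ast_{1,1}=\ker\PM$ (valid because $\Gamma^\ast\succ0$) you correctly deduce $\proj\bigl((K^\ast+\tilde\Delta)^{-1}\bigr)=\proj(\tilde K^{-1})$, and the converse direction is immediate. The gap is in your closing step. Write $K'=K^\ast+\tilde\Delta$ and $g(K)=L(K)+\lambda\norms(K)$ with $L(K)=\tr{\hat\Sigma K}-\log\det(K)$. The two inequalities actually at your disposal are $\norms(K')\geq\norms(\tilde K)+\scalar{\tilde\Pi}{K'-\tilde K}$ (the subgradient inequality anchored at $\tilde K$) and $L(K')\geq L(\tilde K)-\lambda\scalar{\tilde\Pi}{K'-\tilde K}$ (convexity of $L$ together with $DL(\tilde K)=-\lambda\tilde\Pi$). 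Adding them yields $g(K')\geq g(\tilde K)$, which is vacuous since $\tilde K$ is the minimizer; to conclude $g(K')\leq g(\tilde K)$ you would need the subgradient inequality anchored at $K'$, i.e.\ $\tilde\Pi\in\partial\norms(K')$ --- precisely the fact you flagged as the obstacle and then did not establish. Strict convexity does not rescue this: it only sharpens the vacuous direction.

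The step is easy to repair, and the repair needs no reference to the penalty at all. Since $K'-\tilde K\in\Mast$ and $\proj\bigl((K')^{-1}\bigr)=\proj(\tilde K^{-1})$,
\[
\scalar{(K')^{-1}-\tilde K^{-1}}{K'-\tilde K}
=\scalar{\proj\bigl((K')^{-1}-\tilde K^{-1}\bigr)}{K'-\tilde K}=0,
\]
whereas strict convexity of $-\log\det$ on $\mathrm{Sym}_+(p)$ makes the gradient map $K\mapsto -K^{-1}$ strictly monotone, so this inner product is strictly negative whenever $K'\neq\tilde K$. Hence $K'=\tilde K$, i.e.\ $\tilde\Delta=\tilde K-K^\ast$. (The paper reaches the same point through an auxiliary map $G(K)=\PM\v{K^{-1}-\tilde K^{-1}}$ and an appeal to ``uniqueness of the restricted solution''; it is the monotonicity argument above that actually justifies that appeal.)
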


\begin{lemma}[Control of the deviation $\tilde{\Delta}$]\label{lem:Deltabound}
	Assume that $r\in(0,1)$ and $\lambda>0$ satisfy
	\begin{align}\label{eq:rb}
		\cnorm{\v{\hat{\Sigma}-\Sigma^\ast}}\leq \frac{r}{\cH \copnorm{\Q}} - \lambda c_\diamond - \frac{1}{\cH}\frac{r^2}{1-r}.
	\end{align}
	Then, $F$ has a unique fixed point $\Gamma^\ast\v{\tilde{K}-K^\ast}$, $\tilde{K}-K^\ast\in\Mast$, which satisfies 
	\begin{align*}
		\cnorm{\Gamma^\ast\v{\tilde{K}-K^\ast}}\leq 	\frac{r}{\cH}.
	\end{align*}
	If additionally, $r \leq \cH \zeta_{\diamond}(K^\ast)$,
	then $\patt{\tilde{K}}=\patt{K^\ast}$. 
\end{lemma}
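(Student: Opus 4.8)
The plan is to apply Brouwer's fixed point theorem to the map $F$ defined just above, restricted to a suitable closed ball. First I would fix the candidate domain: let $\mathbb{B}(r) = \{\Gamma^\ast\v{\Delta}\colon \Delta\in\Mast,\ \cnorm{\Gamma^\ast\v{\Delta}}\leq r/\cH\}$. This is a compact convex subset of the finite-dimensional space $\Gamma^\ast\v{\Mast}$ (it is the image of a closed $\cnorm{\cdot}$-ball under the linear isomorphism $\Gamma^\ast$ restricted to $\v{\Mast}$, which is invertible since $\Sigma^\ast\succ0$). On this set one has $\cH\cnorm{\Gamma^\ast\v{\Delta}}\leq r<1$, and by Definition \ref{def:c} this forces $\copnorm{\Sigma^\ast\Delta\otimes I_p}\leq \cH\cnorm{\Gamma^\ast\v{\Delta}}\leq r<1$, hence $\opnorm{\Sigma^\ast\Delta}<1$ for the associated operator norm, so $F$ is well-defined and continuous on $\mathbb{B}(r)$ (the matrix inverse is smooth on the invertible cone, which $K^\ast+\Delta$ stays inside).

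Next I would show $F$ maps $\mathbb{B}(r)$ into itself. Starting from the definition of $F$ and using $\v{\tilde K^{-1}} = \v{\Sigma^\ast} + \tilde\Pi_{\mathrm{terms}}$ via the KKT characterization of $\tilde K$ from Lemma \ref{lem:1}(ii), I would rewrite
\[
F(\Gamma^\ast\v{\Delta}) = \Q\v{(K^\ast+\Delta)^{-1} - \Sigma^\ast + \Sigma^\ast\Delta\Sigma^\ast} + \Q\v{\Sigma^\ast - \tilde K^{-1}} + \big(\Gamma^\ast - \Q\Gamma^\ast\big)\v{\Delta},
\]
where I have added and subtracted $\Sigma^\ast - \Sigma^\ast\Delta\Sigma^\ast$ and used $\Q\Gamma^\ast\v{\Delta} = \Gamma^\ast\v{\Delta}$ on $\v{\Mast}$ (since $\Q$ acts as identity there; note $\Delta\in\Mast$ implies $\Gamma^\ast\v{\Delta}$ need not lie in $\v{\Mast}$, so one must be careful — the correct identity is $\Q\Gamma^\ast\PM = \Gamma^\ast\PM$, giving $\Q\Gamma^\ast\v{\Delta} = \Gamma^\ast\v{\Delta}$). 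The first term is exactly $\Q\v{R(\Delta)}$; the KKT conditions for $\tilde K$ give $\PM\v{\tilde K^{-1} - \hat\Sigma} = \lambda D_{\text{dual}}$-type term bounded in the restricted dual gauge, and $\Q\PM = \Q$, so $\Q\v{\Sigma^\ast - \tilde K^{-1}} = -\lambda\Q\v{(\text{subgradient})} + \Q\v{\hat\Sigma - \Sigma^\ast}$. Taking $\cnorm{\cdot}$ and using the triangle inequality, $\copnorm{\Q}$ for the operator bound, Definition \ref{def:c} to bound $\cnorm{D\pi}\leq c_\diamond\polydr{\pi}{\I^\ast}$ on the subgradient term (which has restricted-dual-gauge $\leq 1$), and Lemma \ref{lem:Rbound} for $\cnorm{\v{R(\Delta)}}\leq \tfrac1\cH\tfrac{r^2}{1-r}$, I would obtain
\[
\cnorm{F(\Gamma^\ast\v{\Delta})}\leq \copnorm{\Q}\left(\frac{1}{\cH}\frac{r^2}{1-r} + \lambda c_\diamond + \cnorm{\v{\hat\Sigma-\Sigma^\ast}}\right)\leq \copnorm{\Q}\cdot\frac{r}{\cH\copnorm{\Q}} = \frac{r}{\cH},
\]
where the last inequality is precisely the hypothesis \eqref{eq:rb}. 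This shows $F(\mathbb{B}(r))\subseteq\mathbb{B}(r)$, so Brouwer yields a fixed point, which by Lemma \ref{lem:Ffixed} equals $\Gamma^\ast\v{\tilde K - K^\ast}$; uniqueness of the fixed point follows from uniqueness of $\tilde K$ in Lemma \ref{lem:1}(ii). The bound $\cnorm{\Gamma^\ast\v{\tilde K - K^\ast}}\leq r/\cH$ is then immediate from membership in $\mathbb{B}(r)$.

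Finally, for the pattern statement: if additionally $r\leq \cH\zeta_\diamond(K^\ast)$, then $\cnorm{\Gamma^\ast\v{\tilde K - K^\ast}}\leq r/\cH\leq \zeta_\diamond(K^\ast)$, and since $\tilde K\in\Mast$, Definition \ref{def:zeta} (equivalently Lemma \ref{lem:zeta}) directly gives $\patt{\tilde K} = \patt{K^\ast}$. The main obstacle I anticipate is the bookkeeping in the self-map step: correctly extracting the residual $R(\Delta)$ from $F$ and handling the subgradient term from the KKT conditions of $\tilde K$ so that the three contributions assemble exactly into the right-hand side of \eqref{eq:rb} after multiplying by $\copnorm{\Q}$ — in particular making sure the projection identities $\Q\PM = \Q$ and $\Q\Gamma^\ast\PM = \Gamma^\ast\PM$ are applied in the right places, and that the restricted-dual-gauge bound on the subgradient is what feeds the constant $c_\diamond$.
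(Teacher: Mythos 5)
Your proposal is correct and follows essentially the same route as the paper: rewrite $F(\Gamma^\ast\v{\Delta})$ via the KKT characterization of $\tilde K$ so that it equals $\Q\bigl(\v{R(\Delta)}+\v{\Sigma^\ast-\hat\Sigma}-\lambda\v{\tilde\Pi}\bigr)$, bound the three terms by Lemma \ref{lem:Rbound}, the hypothesis \eqref{eq:rb}, and the $c_\diamond$-bound on the subgradient, then apply Brouwer on the ball of radius $r/\cH$ and identify/unique-ify the fixed point via Lemma \ref{lem:Ffixed}. The only cosmetic difference is that you extract $R(\Delta)$ by adding and subtracting directly inside $F$, whereas the paper first simplifies the auxiliary map $G$ and then multiplies by $\Q$; the algebraic identities you flag ($\Q\PM=\Q$, $\Q\Gamma^\ast\PM=\Gamma^\ast\PM$) are exactly the ones the paper uses.
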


From Lemma \ref{lem:Deltabound}, we obtain the following immediate result
\begin{Corollary}\label{cor:bound}
	Assume that  
$x := \cH(\cnorm{\v{\hat{\Sigma}-\Sigma^\ast}}  + \lambda\,c_\diamond) \leq  \frac{0.1}{\copnorm{\Q}^2}$.
	Then, $r=\copnorm{\Q} x+ 2 \copnorm{\Q}^3 x^2$ satisfies \eqref{eq:rb} and 
	\[
	\cnorm{\Gamma^\ast\v{\tilde{K}-K^\ast}} \leq  \frac{r}{\cH}\leq 1.2\copnorm{\Q} (\cnorm{\v{\hat{\Sigma}-\Sigma^\ast}}  + \lambda\,c_\diamond).
	\]
	
\end{Corollary}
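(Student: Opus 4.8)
The plan is to verify that the stated $r$ satisfies hypothesis \eqref{eq:rb} of Lemma~\ref{lem:Deltabound}, and then to read the two displayed bounds directly off that lemma. Write $Q=\copnorm{\Q}$ for brevity. Since $\Q$ is a (not necessarily orthogonal) projection matrix, $Q\geq 1$, so the standing assumption $x\leq 0.1/Q^2$ yields the crude estimates $Qx\leq 0.1/Q\leq 0.1$ and $2Q^2x\leq 0.2$. Consequently $r=Qx\,(1+2Q^2x)\leq 1.2\,Qx\leq 0.12<1$, so $r\in(0,1)$ and $1-r\geq 0.88$, which is all we need beyond \eqref{eq:rb} itself.

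Next I would recast \eqref{eq:rb}. Adding $\lambda c_\diamond$ to both sides, multiplying by $\cH$, and using $x=\cH(\cnorm{\v{\hat\Sigma-\Sigma^\ast}}+\lambda c_\diamond)$ turns \eqref{eq:rb} into $x\leq r/Q-r^2/(1-r)$. By the very definition of $r$ one has $r/Q=x+2Q^2x^2$, so the inequality to be checked collapses to the single clean estimate $r^2/(1-r)\leq 2Q^2x^2$. From $r\leq 1.2\,Qx$ we get $r^2\leq 1.44\,Q^2x^2$, and combining this with $1-r\geq 0.88$ gives $r^2/(1-r)\leq (1.44/0.88)\,Q^2x^2\leq 2\,Q^2x^2$, exactly as needed. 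Hence \eqref{eq:rb} holds for this choice of $r$.

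Lemma~\ref{lem:Deltabound} now applies and yields at once that $F$ has the unique fixed point $\Gamma^\ast\v{\tilde K-K^\ast}$ with $\tilde K-K^\ast\in\Mast$, together with $\cnorm{\Gamma^\ast\v{\tilde K-K^\ast}}\leq r/\cH$. For the final inequality of the statement, substitute $r\leq 1.2\,Qx$ and $x/\cH=\cnorm{\v{\hat\Sigma-\Sigma^\ast}}+\lambda c_\diamond$ to obtain $r/\cH\leq 1.2\,Q\,(x/\cH)=1.2\,\copnorm{\Q}\,(\cnorm{\v{\hat\Sigma-\Sigma^\ast}}+\lambda c_\diamond)$, which closes the chain.

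I expect no genuine obstacle here: once the form of $r$ is fixed, the whole argument is the two short estimates above plus an invocation of Lemma~\ref{lem:Deltabound}. The only step that must be found rather than merely verified is the choice $r=Qx+2Q^3x^2$ itself: up to the safety coefficient $2$, this is the second-order expansion of the functional inverse of $r\mapsto r/Q-r^2/(1-r)$ at the origin, calibrated to be large enough to make \eqref{eq:rb} hold yet small enough to keep $r$ safely below $1$ under the bound $x\leq 0.1/Q^2$ — the slack $2$ versus $1.44/0.88\approx 1.64$ is precisely what makes the verification go through.
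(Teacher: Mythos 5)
Your proof is correct and follows exactly the route the paper intends: the paper states Corollary \ref{cor:bound} as an ``immediate result'' of Lemma \ref{lem:Deltabound} without writing out the verification, and your argument supplies precisely the missing computation (reduce \eqref{eq:rb} to $r^2/(1-r)\leq 2\copnorm{\Q}^2x^2$ via $r/\copnorm{\Q}=x+2\copnorm{\Q}^2x^2$, then use $\copnorm{\Q}\geq 1$, $r\leq 1.2\copnorm{\Q}x\leq 0.12$ and $1.44/0.88\leq 2$). The invocation of Lemma \ref{lem:Deltabound} and the final substitution $x/\cH=\cnorm{\v{\hat\Sigma-\Sigma^\ast}}+\lambda c_\diamond$ are both sound.
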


\subsection{Proofs of the main results}\label{sec:mr}
\begin{proof}[Proof of Theorem \ref{thm:patt_recov}]
	Observe that condition \eqref{eq:smallr} implies condition \eqref{eq:rb} from Lemma \ref{lem:Deltabound}.
	By Lemma \ref{lem:Deltabound}, $F$ has a fixed point $\Gamma^\ast\v{\tilde{\Delta}}\in\Gamma^\ast\v{\Mast}$ such that
	\[
	\cH\cnorm{\Gamma^\ast\v{\tilde{\Delta}}}\leq r,
	\]
	where $\tilde\Delta = \tilde{K}-K^\ast$. Since $r\leq \cH\,\zeta_\diamond(K^\ast)$, we have $\patt{\tilde{K}}=\patt{K^\ast}$.

	By Lemma \ref{lem:Rbound}, the residual term $R=R(\tilde{\Delta})$ satisfies 
	\begin{align*}
		\cnorm{ \v{R}} &\leq \frac{1}{\cH}\frac{r^2}{1-r}. 
	\end{align*}
	Using condition \eqref{eq:smallr}, we obtain
	\begin{align*}
		\cnorm{\v{R}}+\cnorm{\v{\hat{\Sigma}-\Sigma^\ast}} \leq\frac{1}{\cH} \frac{r^2}{1-r} + \cnorm{\v{\hat{\Sigma}-\Sigma^\ast}} \leq\frac{ \alpha\,\lambda\,\tau_\diamond(\I^\ast)}{\copnorm{I_{p^2}-\Q}}.
	\end{align*}
	Applying Lemma \ref{lem:Vbound}, we conclude that $\normd(\Pi)\leq 1$. Then, by Lemma \ref{lem:hatKtildeK}, it follows that $\hat{K}=\tilde{K}$, which completes the proof.
\end{proof}

\begin{proof}[Proof of Theorem \ref{cor:main}]
	The proof follows from Theorem \ref{thm:patt_recov} and Lemma \ref{lem:optim} with $\tau' = \tau/\copnorm{I_{p^2}-\Q}$.
\end{proof}

The proof of Theorem \ref{thm:glasso} is relegated to the Appendix \ref{app:proofs}.

\bibliographystyle{plainnat}
\bibliography{GLASSO-20221101}


\appendix 

\section{Proofs}\label{app:proofs}

\textbf{Proofs from Introduction}

\begin{proof}[Proof of Lemma \ref{lem:restr_ball}]
	For $y\in\SS_{\I}$, using $\PP_{\I}^\top=\PP_{\I}$, we have 
	\begin{align*}
		\polydr{y}{\I} &= \sup\{x^\top y\colon x\in \SS_{\I}, \max_{i=1,\ldots,K} v_i^\top x\leq 1\} = \sup\{(\PP_{\I} x)^\top y\colon x\in \R^m, \max_{i=1,\ldots,K} v_i^\top \PP_{\I} x\leq 1\}  \\
		&=\sup\{x^\top y\colon x\in \R^m, \max_{i=1,\ldots,K} (\PP_{\I} v_i)^\top x\leq 1\}.
	\end{align*}
	By the standard polytope-duality argument, we obtain the assertion.
\end{proof}

\begin{proof}[Proof of Lemma \ref{lem:tau}]
	By the assumption and Remark \ref{rem:tau_does_not_exist}, we have $\tau_\diamond(\I)\geq 0$. 
	Suppose $\tau_\diamond(\I)>0$. Then, for any $t\in(0,\tau_\diamond(\I))$, we have 
	\[
	\forall\,\pi^\bot\in\SS_\I^\bot\quad\cnorm{D \pi^\bot}> t\quad\mbox{or}\quad\polyd{f_\I+\pi^\bot}\leq 1.
	\]
	Thus, if $\pi\in\SS_\I$ is such that $\polyd{f_\I+\pi^\bot}> 1$, then $\cnorm{D \pi^\bot}> t$.
	Therefore, 
	\[
	\inf_{\pi^\bot\in\SS_\I^\bot\colon \polyd{f_\I+\pi^\bot}> 1} \cnorm{D \pi^\bot} \geq  \tau_\diamond(\I) 
	\]
	and the same bound holds trivially when $\tau_\diamond(\I)=0$. 
	
	Now suppose that 
	\[
	\varepsilon = \inf_{\pi^\bot\in\SS_\I^\bot\colon \polyd{f_\I+\pi^\bot}> 1} \cnorm{D \pi^\bot} - \tau_\diamond(\I)>0.  
	\]
	By the definition of $\tau_\diamond(\I)$, for $t=\tau_\diamond(\I)+\varepsilon/2$, there exists $\pi_0\in \SS_\I^\bot$ such that 
	\[
	\cnorm{D \pi_0^\bot}\leq t\quad\mbox{and}\quad \polyd{f_\I+\pi_0^\bot}> 1.
	\]
	Thus, we obtain 
	\begin{align*}
		\tau_\diamond(\I)+\frac{\varepsilon}{2} = t \geq \cnorm{D \pi_0^\bot} 
		\geq \inf_{\pi^\bot\in\SS_\I^\bot\colon \polyd{f_\I+\pi^\bot}> 1} \cnorm{D \pi^\bot} = \tau_\diamond(\I)+\varepsilon,
	\end{align*}
	which is a contradiction and proves the assertion. 
\end{proof}

\begin{proof}[Proof of Lemma \ref{lem:tau_positive}]
	For each pattern $\I$,  $\tau_\diamond(\I)$ is exactly the largest “tangential” radius (measured by $\cnorm{ D\,\cdot}$) around the face projection $f_\I$ inside which every perturbation stays within the dual ball $B^\ast$. If $f_\I\in\mathrm{ri}(F_\I)$, then there is a positive distance from $f_\I$ to any of the lower-dimensional facets of $F_\I$. Any tangential perturbation $\pi^\bot\in\SS_{\I}^\bot$ with $\cnorm{ D\,\pi^\bot}$ smaller than that distance remains in $F_\I\subset B^\ast$, so $\polyd{f_\I+\pi^\bot}\leq 1$. Hence the supremum in the definition is strictly positive.
	
	If $f_\I$ lies on the relative boundary of $F_\I$, then arbitrarily small tangential steps along the face will push $f_\I+\pi^\bot$ out of $F_\I$ (and so of $B^\ast)$, forcing $\tau_\diamond(\I)=0$. 
\end{proof}

\begin{proof}[Proof of Lemma \ref{lem:zeta}]
	First, take any $z$ with $0 < z < \zeta_\diamond(K^\ast)$. By definition of $\zeta_\diamond(K^\ast)$, for every $K\in\Mast$  we must have either $\cnorm{\Gamma^\ast\v{K-K^\ast}}  > z$ or $\patt{K}=\patt{K^\ast}$.   Equivalently, if $\patt{K}\neq\patt{K^\ast}$, then $\cnorm{\Gamma^\ast\v{K-K^\ast}}  > z$.  Hence
	\[
	\inf_{\substack{K\in\Mast\\
			\patt{K}\neq\patt{K^\ast}}} \cnorm{\Gamma^\ast\v{K-K^\ast}} \ge z.
	\]
	Since this holds for all $z$ less than $\zeta_\diamond(K^\ast)$, we conclude
	\[
	\inf_{\substack{K\in\Mast\\
			\patt{K}\neq\patt{K^\ast}}} \cnorm{\Gamma^\ast\v{K-K^\ast}} \ge \zeta_\diamond(K^\ast).
	\]
	
	Next, suppose for contradiction that 
	\[
	\varepsilon 
	:= 
	\inf_{\substack{K\in\Mast\\
			\patt{K}\neq\patt{K^\ast}}} \cnorm{\Gamma^\ast\v{K-K^\ast}} - \zeta_\diamond(K^\ast)> 0.
	\]
	Let $z := \zeta_\diamond(K^\ast) + \varepsilon/2$.  By definition of the infimum, there exists some $K_0\in\Mast$ with $\patt{K_0}\neq\patt{K^\ast}$) such that 
	\[
	\cnorm{\Gamma^\ast\v{K_0-K^\ast}} \le z = \zeta_\diamond(K^\ast) + \tfrac{\varepsilon}{2}.
	\]
	Thus
	\[
	\zeta_\diamond(K^\ast) + \frac{\varepsilon}{2} = z  \ge \cnorm{\Gamma^\ast\v{K_0-K^\ast}}
	\ge \inf_{\substack{K\in\Mast\\
			\patt{K}\neq\patt{K^\ast}}} \cnorm{\Gamma^\ast\v{K-K^\ast}}
	= \zeta_\diamond(K^\ast) + \varepsilon.
	\]
	This is a contradiction. Therefore,
	\[
	\inf_{\substack{K\in\Mast\\
			\patt{K}\neq\patt{K^\ast}}} \cnorm{\Gamma^\ast\v{K-K^\ast}} \le \zeta_\diamond(K^\ast).
	\]
	what ends the proof of the first part.

If $\cnorm{\cdot}=\|\cdot\|_\infty$, then 
    \begin{align*}
\zeta_\diamond(K^\ast) 
& = 
\inf_{\substack{K\in\Mast\\
      \patt{K}\neq\patt{K^\ast}}} \|\Gamma^\ast\v{K-K^\ast}\|_\infty\geq  \opnorm{K^\ast}_\infty^{-2}\inf_{\substack{K\in\Mast\\
      \patt{K}\neq\patt{K^\ast}}} \!\!\! \|\v{K-K^\ast}\|_\infty\\
      & \geq \opnorm{K^\ast}_\infty^{-2}\!\!\!\inf_{\substack{x\in\SS^\ast\\
      \patt{x}\neq\patt{x^\ast}}} \!\!\! \|x-x^\ast\|_\infty,
\end{align*}
where $x^\ast=\v{K^\ast}$ and $\SS^\ast=\SS_{x^\ast}$. 
\end{proof}

\textbf{Proofs from Section \ref{sec:pdwm}}

\begin{proof}[Proof of Lemma \ref{lem:1}]
	(i) The proof follows essentially the same steps as in \cite[Lemma 3]{ravikumar2011graphical}. By the Hadamard inequality, one shows that the objective function is coercive. Since the function is convex, the minimum is attained and is also unique.
	
	(ii) The existence and uniqueness of the solution follow from the same argument as in (i).
	
	Define the function $L(K) = \tr{\hat{\Sigma}K}-\log\det(K)$. 
	A standard result shows that the directional derivative $DL$ of $L$ at $K\in\Mast\cap\mathrm{Sym}_+(p)$ in the direction $H\in\Mast$ is given by 
	\begin{align*}
		\scalar{DL(K)}{H} &= \lim_{t\to 0}\frac{L(K+tH)-L(K)}{t}=\tr{\hat{\Sigma}\cdot H}-\tr{K^{-1}\cdot H} \\
		&= \scalar{\proj(\hat{\Sigma}-K^{-1})}{H}.
	\end{align*}
	Since $L+\lambda \norms$ is convex on $\Mast$, the minimizer $\tilde{K}$ is characterized by the condition $0\in \partial (L+\norms)(\tilde{K})$. Since $L$ is differentiable, this condition simplifies to
	\[
	\proj(\tilde{K}^{-1}-\hat{\Sigma})= -DL(\tilde K)\in \partial \norms(\tilde{K}).
	\]
\end{proof}

\begin{proof}[Proof of Lemma \ref{lem:hatKtildeK}]
	Assume $\normd(\Pi)\leq 1$. 
	By Lemma \ref{lem:1} (ii), we have $\proj(\Pi)=\tilde{\Pi}\in \partial \norms(\tilde{K})$. Applying Lemma \ref{lem:subd}, we obtain
	\[
	\normsd(\tilde{\Pi})\leq 1\quad\mbox{ and }\quad  \norms(\tilde{K}) = \scalar{\tilde{\Pi}}{\tilde{K}}.
	\]
	By the definitions of $\norms = \norm\big|_{\Mast}$ and $\proj$, it follows that
	\[
	\norm(\tilde{K}) = \norms(\tilde{K})   =\tr{\tilde{\Pi}\cdot \tilde{K}}=\tr{\Pi\cdot \tilde{K}}.
	\]
	Now, if $\normd(\Pi)\leq 1$, then by Lemma \ref{lem:subd}, we conclude that
	$\Pi\in \partial \norm(\tilde{K})$. By the uniqueness of the solution to \eqref{eq:hatK} (recall Lemma \ref{lem:1}), this implies that  $\Pi=\hat{\Pi}$, which further implies that $\hat{K}=\tilde{K}$.
	
	Assume $\hat{K}=\tilde{K}$. Then $\tilde{\Pi}=\Pi$, which implies that $\normd(\Pi) = \normd(\hat{\Pi})\leq 1$.
\end{proof}

\textbf{Proofs from Section \ref{sec:bb}} 

\begin{proof}[Proof of Lemma \ref{lem:Vbound}]
	From the definition of $\Pi$, setting $V=R+\Sigma^\ast-\hat{\Sigma}$, we obtain
	\[
	\Pi = \frac{1}{\lambda}\left(\tilde{K}^{-1}-\hat{\Sigma}\right)= \frac{1}{\lambda}\left(V-\Sigma^\ast (\tilde{K}-K^\ast)\Sigma^\ast\right). 
	\]
	Moreover, we can decompose
	$\v{\Pi} = \Q \,\v{\Pi}+(I_{p^2}-\Q)\v{\Pi}$. 
	A key property of $\Q$ is
	$(I_{p^2} -\Q)\Gamma^\ast\PM = 0$.
	Thus, we have
	\[
	(I_{p^2}-\Q)\v{\Pi} = \frac{1}{\lambda}(I_{p^2}-\Q) \v{V},
	\]
	since $\v{\Sigma^\ast (\tilde{K}-K^\ast)\Sigma^\ast}= \Gamma^\ast \v{\tilde{K}-K^\ast}=\Gamma^\ast \PM \v{\tilde{K}-K^\ast}=\Q\, \v{\tilde{K}-K^\ast}$.
	
	Since $\proj(\Pi)=\tilde{\Pi}$, we have $\PM\v{\Pi}=\v{\proj(\Pi)}=\v{\tilde\Pi}$.
	By assumption we have $\patt{\tilde{K}}=\patt{K^\ast}$, which, by definition, implies $\partial \norms(\tilde{K}) = \partial \norms(K^\ast)$. By Definition \ref{def:f}, we have 
	\[
	\vp{\proj( \partial \norms(K^\ast))}= \PP_{\I^\ast}\vp{ \partial \norms(K^\ast)}  = \{ f_{\I^\ast}\},
	\]
	which implies that $\v{\tilde{\Pi}} = D f_{\I^\ast}$, where $D$ is the duplication matrix. 
	Therefore,
	\begin{align*} 
		(I_{p^2}-\PM) \v{\Pi} & =(\Q-\PM)\v{\Pi} + (I_{p^2}-\Q) \v{\Pi} \\
		&=(\Q-\PM)D f_{\I^\ast} + \frac{1}{\lambda}(I_{p^2}-\Q) \v{V},
	\end{align*}
	where we used the fact that $\Q-\PM =(\Q-\PM)\PM$.
	Since $\vp{\Pi} = f_{\I^\ast} + \pi^\bot$ with $\pi^\bot=(I_m-\PP_{\I^\ast})\vp{\Pi}\in\SS_{\I^\ast}^\bot$, by Definition \ref{def:tau}, it suffices to show that 
	\[
	\cnorm{\v{\proj^\bot(\Pi)}} = \cnorm{D\pi^\bot}\leq \tau_\diamond(\I^\ast),
	\]
	to obtain $\normd(\Pi)=\polyd{\vp{\Pi}}=\polyd{f_{\I^\ast}+\pi^\bot}\leq 1$. From the previous calculations, we obtain
	\begin{align*}
		\cnorm{{(I_{p^2}-\PM)\v{\Pi}}} &\leq  \cnorm{(\Q-\PM) D f_{\I^\ast}}  +  \frac{1}{\lambda}\cnorm{ (I_{p^2}-\Q) \v{V}}  \\
		&\leq \cnorm{(\Q-\PM) D f_{\I^\ast}} + \frac{1}{\lambda}\copnorm{I_{p^2}-\Q}\cnorm{ \v{V}}.
	\end{align*}

\end{proof}

\begin{proof}[Proof of Lemma \ref{lem:Rbound}]
	By Definition \ref{def:c}, we have
	\[
	\copnorm{\Sigma^\ast\Delta\otimes I_p} \leq \cH \cnorm{\Gamma^\ast\v{\Delta}}\leq r<1. 
	\]
	Thus, the matrix $(I_p+\Delta\Sigma^\ast)$ is invertible.
	By the definition of $R(\Delta)$, we have $ (K^\ast+\Delta)^{-1}-\Sigma^\ast = R(\Delta) -\Sigma^\ast\Delta\Sigma^\ast$.  On the other hand, 
	\[
	(K^\ast+\Delta)^{-1}-\Sigma^\ast = -(K^\ast+\Delta)^{-1}\Delta\Sigma^\ast = (R(\Delta)+\Sigma^\ast-\Sigma^\ast\Delta\Sigma^\ast)\Delta\Sigma^\ast,
	\]
	which gives 
	\begin{align*}
		R(\Delta)
		=\Sigma^\ast\Delta\Sigma^\ast\Delta\Sigma^\ast(I_p+\Delta\Sigma^\ast)^{-1} 
	\end{align*}
	By \eqref{eq:vectorization}, we get
	\begin{align*}
		\v{R(\Delta)} =  (I_p\otimes \Sigma^\ast\Delta ) ((I_p+\Sigma^\ast\Delta)^{-1}\otimes I_p)\Gamma^\ast \v{\Delta}.
	\end{align*}
	Since $\copnorm{\Sigma^\ast\Delta\otimes I_p} \leq  r<1$, we have
	\[
	\copnorm{(I_p+\Sigma^\ast\Delta )^{-1}\otimes I_p} \leq \sum_{k=0}^\infty \copnorm{\Sigma^\ast\Delta\otimes I_p}^k \leq \frac{r}{1-r}.
	\]
	Therefore
	\begin{align*}
		\copnorm{(I_p\otimes \Sigma^\ast\Delta ) ((I_p+\Sigma^\ast\Delta)^{-1}\otimes I_p) } \leq \frac{\cH \cnorm{\Gamma^\ast\v{\Delta}}}{1-\cH \cnorm{\Gamma^\ast\v{\Delta}}}. 
	\end{align*}
\end{proof}

\textbf{Proofs from Section \ref{sec:fpa}}

Define the function $G\colon \Mast\cap\mathrm{Sym}_+(p)\to\v{\Mast}$ by 
\begin{align*}
	G(K) &= \PM\v{K^{-1}-\tilde{K}^{-1}}.
\end{align*}
so that 
\[
F(\Gamma^\ast \v{\Delta}) = \Q G(K^\ast+\Delta)+\Gamma^\ast\v{\Delta}.
\]
First we prove the following easy lemma. 
\begin{lemma}
	$G(K)=0$ if and only if $K=\tilde{K}$.
\end{lemma}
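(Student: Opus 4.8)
The plan is to reduce the statement to the strict convexity of $K\mapsto-\log\det K$ on the convex set $\Mast\cap\mathrm{Sym}_+(p)$. The ``if'' direction is immediate: if $K=\tilde K$ then $K^{-1}-\tilde K^{-1}=0$, hence $G(K)=\PM\v{0}=0$. For the converse I would first unwind the definitions. Since $\PM$ is the matrix of the orthogonal projection $\proj$ onto $\v{\Mast}$ with respect to the trace inner product and $\v{\cdot}$ is injective, the equation $G(K)=\PM\v{K^{-1}-\tilde K^{-1}}=0$ is equivalent to $\proj(K^{-1}-\tilde K^{-1})=0$, that is, to $K^{-1}-\tilde K^{-1}$ being orthogonal to $\Mast$. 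Because $\Mast$ is a linear subspace and both $K,\tilde K\in\Mast$, we have $K-\tilde K\in\Mast$, and testing the orthogonality against this element gives
\[
\tr{(K^{-1}-\tilde K^{-1})(K-\tilde K)}=\scalar{K^{-1}-\tilde K^{-1}}{K-\tilde K}=0.
\]

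The second step is a short computation showing this trace is strictly negative unless $K=\tilde K$. Expanding,
\[
\tr{(K^{-1}-\tilde K^{-1})(K-\tilde K)}=2p-\tr{K^{-1}\tilde K}-\tr{\tilde K^{-1}K}.
\]
Writing $M=K^{-1/2}\tilde K\,K^{-1/2}\in\mathrm{Sym}_+(p)$ with eigenvalues $\mu_1,\dots,\mu_p>0$, this equals $2p-\tr{M}-\tr{M^{-1}}=-\sum_{i=1}^p\bigl(\sqrt{\mu_i}-1/\sqrt{\mu_i}\bigr)^2\le 0$, with equality if and only if every $\mu_i=1$, i.e. $M=I_p$, i.e. $\tilde K=K$. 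Combined with the previous display, this forces $K=\tilde K$, which is exactly the contrapositive of ``$G(K)=0\Rightarrow K=\tilde K$''. (Equivalently, one may phrase the same computation as strict monotonicity of the gradient $K\mapsto-\proj(K^{-1})$ of the strictly convex function $-\log\det$ restricted to $\Mast\cap\mathrm{Sym}_+(p)$.)

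I do not expect any genuine obstacle; the only points needing a little care are bookkeeping ones: that $\PM$ encodes the trace‑orthogonal projection, so $\ker G$ corresponds to orthogonality to $\Mast$; that $\Mast$ being an honest linear subspace is what permits testing orthogonality against $K-\tilde K$; and that the positivity/invertibility required to form $K^{-1/2}$, $\tilde K^{-1}$ and $M$ holds because $K,\tilde K\in\mathrm{Sym}_+(p)$ by hypothesis (with $\tilde K\in\Mast\cap\mathrm{Sym}_+(p)$ coming from Lemma \ref{lem:1}(ii) applied to \eqref{eq:tildeK}).
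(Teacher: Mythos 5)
Your proof is correct, but it takes a genuinely different route from the paper's. The paper substitutes $\tilde K^{-1}=\hat\Sigma+\lambda\Pi$ and $\PM\v{\Pi}=\v{\tilde\Pi}$ to rewrite $G(K)=\v{\proj(K^{-1}-\hat\Sigma)-\lambda\tilde\Pi}$, and then appeals to the uniqueness of the minimizer of the restricted problem \eqref{eq:tildeK}, i.e.\ it identifies $G(K)=0$ with the first-order optimality condition of Lemma \ref{lem:1}(ii). You instead bypass the optimization problem, the sample covariance and the subgradient entirely, and prove the stronger statement that $K\mapsto\PM\v{K^{-1}}$ is injective on $\Mast\cap\mathrm{Sym}_+(p)$: orthogonality of $K^{-1}-\tilde K^{-1}$ to $\Mast$ tested against $K-\tilde K\in\Mast$ gives $\tr{(K^{-1}-\tilde K^{-1})(K-\tilde K)}=-\sum_i\bigl(\sqrt{\mu_i}-1/\sqrt{\mu_i}\bigr)^2=0$, forcing $M=I_p$. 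All the bookkeeping you flag checks out: $\PM$ is the orthogonal projection onto $\v{\Mast}$ and the Euclidean inner product on $\v{\mathrm{Sym}(p)}$ is the trace inner product, $\Mast$ is a linear subspace, and $G$ is by definition only evaluated on $\Mast\cap\mathrm{Sym}_+(p)$. What each approach buys: the paper's is a one-line reduction given the KKT machinery already set up (though it implicitly needs $\tilde\Pi$ to be a valid subgradient at the candidate $K$, which your argument never requires), while yours is a self-contained strict-monotonicity argument that would survive verbatim if the penalty or the data term were changed, as long as the smooth part remains $-\log\det$.
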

\begin{proof}
	We rewrite $G(K)$ as 
	\[
	G(K)= \PM\v{K^{-1}-\hat{\Sigma}-\lambda \Pi} = \v{ \proj(K^{-1}-\hat{\Sigma}) - \lambda \tilde{\Pi}}.
	\]
	By the uniqueness of the solution to \eqref{eq:tildeK}, it follows that $G(K)=0$ if and only if $K=\tilde{K}$.
\end{proof}

Now we are ready to present the proof of Lemma \ref{lem:Ffixed}. 
\begin{proof}[Proof of Lemma \ref{lem:Ffixed}]
	By definition, $F(\Gamma^\ast\v{\Delta})=\Gamma^\ast\v{\Delta}$ if and only if
	$(\Gamma^\ast_{1,1})^+ G(K^\ast+\Delta)=0$. This is equivalent to the existence of $w\in\R^{p^2}$ such that 
	$$G(K^\ast+\Delta)=(I_{p^2} - \Gamma^\ast_{1,1}(\Gamma^\ast_{1,1})^+)w.$$
	We now show that $(I_{p^2} - \Gamma^\ast_{1,1}(\Gamma^\ast_{1,1})^+)w = 0$.
	Since $\Gamma_{1,1}^\ast(\Gamma_{1,1}^\ast)^{+}=\PM$, it follows that
	$$
	(I_{p^2} - \PM)w = G(K^\ast+\Delta) = \PM\v{(K^\ast+\Delta)^{-1}-\tilde{K}^{-1}}.
	$$
	Multiplying both sides from the left by $(I_{p^2} -\PM)$, 
	we conclude that $(I_{p^2} - \Gamma^\ast_{1,1}(\Gamma^\ast_{1,1})^+)w$ is a zero vector.
	This implies that $G(K^\ast+\Delta)=0$, i.e., $\Delta = \tilde{K}-K^\ast$. 
\end{proof}

\begin{proof}[Proof of Lemma \ref{lem:Deltabound}]
	Let $\Delta\in\Mast$ and recall that $R(\Delta) = (K^\ast+\Delta)^{-1}-\Sigma^\ast+\Sigma^\ast\Delta\Sigma^\ast$.
	Then, we compute
	\begin{align*}
		G(K^\ast+\Delta) &= \PM\v{(K^\ast+\Delta)^{-1}-(K^\ast)^{-1} + \Sigma^\ast-\hat{\Sigma}-\lambda\,\tilde\Pi}\\ 
		&=\PM\v{R(\Delta) - \Sigma^\ast\Delta\Sigma^\ast+ (\Sigma^\ast-\hat{\Sigma})-\lambda\,\tilde\Pi}\\
		&=\PM\v{R(\Delta) + (\Sigma^\ast-\hat{\Sigma})-\lambda\,\tilde\Pi} - \PM\Gamma^\ast\v{\Delta}.
	\end{align*}
	Since $\v{\Delta}=\PM\v{\Delta}$, we have $\PM\Gamma^\ast\v{\Delta}=\Gamma_{1,1}^\ast \v{\Delta}$. 
	Thus,
	\begin{align}\label{eq:F}\begin{split}
			F(\Gamma^\ast\v{\Delta}) =& \Q \left( \v{R(\Delta)} +\v{\Sigma^\ast-\hat\Sigma}-\lambda\,\v{\tilde\Pi}\right)\\
			& -\Gamma^\ast(\Gamma^\ast_{1,1})^{+}\Gamma_{1,1}^\ast  \v{\Delta} + \Gamma^\ast\v{\Delta} \\
			&= \Q\left( \v{R(\Delta)} +\v{\Sigma^\ast-\hat\Sigma}-\lambda\,\v{\tilde\Pi}\right).
	\end{split}\end{align}
	By Definition \ref{def:c}, we have $\cnorm{\v{\tilde{\Pi}}}\leq c_{\diamond} \polydr{\vp{\tilde{\Pi}}}{\I^\ast} = c_{\diamond}$. 
	Thus, taking norms of \eqref{eq:F}, we obtain
	\[
	\cnorm{F(\Gamma^\ast\v{\Delta})} \leq \copnorm{\Q}\left(
	\cnorm{\v{R(\Delta)}} +\cnorm{ \v{\hat\Sigma-\Sigma^\ast}}+\lambda\, c_\diamond\right).
	\]
	
	Now, suppose that $\cnorm{\Gamma^\ast\v{\Delta}}\leq r/\cH$. By Definition \ref{def:c}, we have $\copnorm{\Sigma^\ast\Delta\otimes I_p}\leq \cH\cnorm{\Gamma^\ast\v{\Delta}}\leq r$; thus  $F(\Gamma^\ast\v{\Delta})$ is well defined under the condition $r<1$.
	Applying Lemma \ref{lem:Rbound} and using assumption \eqref{eq:rb}, we obtain 
	\begin{align*}
		\cnorm{F(\Gamma^\ast\v{\Delta})}\leq  \copnorm{\Q}\left( \frac{1}{\cH}\frac{r^2}{1-r} + \cnorm{\v{\hat{\Sigma}-\Sigma^\ast}}+\lambda\, c_\diamond\right)\leq r/\cH. 
	\end{align*}
	Since $F$ is continuous, Brouwer's fixed-point theorem guarantees the existence of a fixed point $\Gamma^\ast\v{\tilde\Delta}$ satisfying
	\[
	\cnorm{\Gamma^\ast\v{\tilde\Delta}}\leq r/\cH. 
	\]
	Moreover, by Lemma \ref{lem:Ffixed}, this fixed point is unique. By Lemma \ref{lem:Ffixed}, we have $\tilde{\Delta}=\tilde{K}-K^\ast\in\Mast$. The second part of the assertion follows from the definition of $\zeta_\diamond(K^\ast)$. 
\end{proof}

\textbf{Proofs from Section \ref{sec:mr}} 

\begin{lemma}\label{lem:optim}
	Assume \(z\in(0,1]\) and define
	\[
	\delta
	=\sup_{(r,\lambda)\in (0,z)\times(0,\infty)}\left\{ \min\left\{\frac{r}{\cH\copnorm{\Q}} - \lambda\,c, \lambda\,\alpha\tau'
	\right\} - \frac1{\cH}\frac{r^2}{1-r}\right\}.
	\]
	Then the supremum is attained at the unique pair
	\[
	r_o=\min\{r^*,z\},
	\qquad
	\lambda_o=\frac{r_o }{\cH\copnorm{\Q}(  c+\alpha\tau')},
	\]
	where
	\[
	r^\ast=1-\frac{1}{\sqrt{1+M}}\quad \mbox{and}\quad M = \frac{ \,\alpha\tau'}{\copnorm{\Q}(  c+\alpha\tau')}
	\]
	Moreover,
	\[
	\delta=\frac{1}{\cH}
	\begin{cases}
		\bigl(\sqrt{1+M}-1\bigr)^2,
		&\mbox{if }r^\ast\leq z,\\
	M z
		-\tfrac{z^2}{1-z},
		&\mbox{if }r^\ast>z.
	\end{cases}
	\]
\end{lemma}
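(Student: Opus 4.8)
The plan is a straightforward two-stage maximization: for each fixed $r$ optimize over $\lambda$, then optimize the resulting one-variable expression over $r\in(0,z)$.

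\textbf{Stage 1 (optimize over $\lambda$).} Fix $r\in(0,z)$ and put $A=\frac{r}{\cH\copnorm{\Q}}$. The term $-\frac1\cH\frac{r^2}{1-r}$ does not involve $\lambda$, so it suffices to evaluate $\sup_{\lambda>0}\min\{A-\lambda c,\ \lambda\alpha\tau'\}$. Since $c>0$ the map $\lambda\mapsto A-\lambda c$ is affine and strictly decreasing, while $\lambda\mapsto\lambda\alpha\tau'$ is linear and strictly increasing (as $\alpha\tau'>0$); their minimum is concave, piecewise linear, increasing up to the unique intersection point and decreasing thereafter. The intersection is at $\lambda_o(r)=\frac{A}{c+\alpha\tau'}=\frac{r}{\cH\copnorm{\Q}(c+\alpha\tau')}$, where the common value is $\lambda_o(r)\,\alpha\tau'=A\cdot\frac{\alpha\tau'}{c+\alpha\tau'}=\frac{r}{\cH}\cdot\frac{\alpha\tau'}{\copnorm{\Q}(c+\alpha\tau')}=\frac{rM}{\cH}$. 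Hence, for fixed $r$, the supremum over $\lambda$ equals $\frac1\cH\bigl(rM-\frac{r^2}{1-r}\bigr)$ and is attained only at $\lambda=\lambda_o(r)$.

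\textbf{Stage 2 (optimize over $r$).} Since $\delta=\frac1\cH\sup_{r\in(0,z)}h(r)$ with $h(r):=Mr-\frac{r^2}{1-r}$, it remains to maximize $h$ on $(0,z)$. Using $\frac{r^2}{1-r}=\frac{1}{1-r}-(1+r)$ one gets $h(r)=(1+M)r+1-\frac{1}{1-r}$, hence $h'(r)=(1+M)-\frac{1}{(1-r)^2}$ and $h''(r)=-\frac{2}{(1-r)^3}<0$ on $(0,1)$, so $h$ is strictly concave there. Its unique critical point solves $(1-r)^2=\frac{1}{1+M}$, i.e.\ $r=1-\frac{1}{\sqrt{1+M}}=r^\ast$, and substituting (using $(1+M)r^\ast=(1+M)-\sqrt{1+M}$ and $\frac{1}{1-r^\ast}=\sqrt{1+M}$) gives $h(r^\ast)=M+2-2\sqrt{1+M}=(\sqrt{1+M}-1)^2$. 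Moreover $h'>0$ precisely on $(0,r^\ast)$, and $h(0)=0$.

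\textbf{Assembling the cases.} If $r^\ast\le z$, strict concavity gives $\sup_{(0,z)}h=h(r^\ast)=(\sqrt{1+M}-1)^2$, attained at $r_o=r^\ast$ when $r^\ast<z$. If $r^\ast>z$, then $r^\ast<1$ forces $z<1$, $h$ is strictly increasing on $(0,z]\subset(0,r^\ast)$, so $\sup_{(0,z)}h=h(z)=Mz-\frac{z^2}{1-z}$, which is positive because $h(0)=0$ and $h$ increases on $(0,z]$. In both regimes the optimal $r$ is $r_o=\min\{r^\ast,z\}$ and, by Stage 1, the optimal $\lambda$ is $\lambda_o=\lambda_o(r_o)=\frac{r_o}{\cH\copnorm{\Q}(c+\alpha\tau')}$; dividing the optimal value by $\cH$ yields the stated piecewise formula for $\delta$. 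The only delicate point is bookkeeping at the boundary: when $r^\ast\ge z$ the supremum is approached as $r\uparrow z$ but is not literally attained in the open interval $(0,z)$ — one reads the claim as "$\delta$ equals the value at $r_o=\min\{r^\ast,z\}$" (equivalently, take $(0,z]$ when $z<1$). This is exactly how the lemma is used in Theorem~\ref{cor:main}, with $z=\cH\zeta_\diamond$, where the regimes $r^\ast\le z$ and $r^\ast>z$ give the two cases of $\delta$. Everything else is elementary calculus.
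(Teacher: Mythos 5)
Your proposal is correct and follows essentially the same route as the paper's proof: for fixed $r$, balance the two terms in the minimum to get $\lambda_o(r)$ and the value $rM/\cH$, then maximize $h(r)=Mr-\tfrac{r^2}{1-r}$ over $r$, whose unique critical point is $r^\ast$ by concavity, yielding the two regimes according to whether $r^\ast\le z$. Your write-up supplies the details the paper leaves as "a short calculation" (the algebraic identity $h(r)=(1+M)r+1-\tfrac{1}{1-r}$, the value $(\sqrt{1+M}-1)^2$, and the correct observation that when $r_o=z$ the supremum over the open interval is only approached, not attained), all of which check out.
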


\begin{proof}
	First fix $r$ and consider the objective function as a function of $\lambda$. Clearly, the supremum in $\lambda$ is attained when 
	\[
	\frac{r}{\cH\copnorm{\Q}} - \lambda\,c =  \alpha\,\lambda\,\tau',\qquad \mbox{i.e.,}\qquad
	\lambda=\frac{r}{\cH\copnorm{\Q}(  c+\alpha\tau')}.
	\]
	Substituting back gives a one-dimensional problem
	\[
	\delta= \frac{1}{\cH}\sup_{r\in(0,z)}\Bigl\{M r - \frac{r^2}{1-r}\Bigr\}.
	\]
	A short calculation shows that the optimum is attained at  
$r_o=\min\{ r^\ast, z\}$. Inserting \(r_o\) into the objective produces the two cases in the statement.
\end{proof}

\begin{lemma}\label{lem:kappainfty}
For any $\Delta\in\mathrm{Sym}(p)$, we have
\[
\opnorm{\Sigma^\ast \Delta}_\infty \leq \|\v{K^\ast}\|_1\cdot \|\Gamma^\ast\v{\Delta}\|_\infty.
\]
\end{lemma}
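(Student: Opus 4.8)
The plan is to unwind both sides of the claimed inequality into entrywise statements about the matrix $\Sigma^\ast\Delta\Sigma^\ast$ and then close with a single triangle-inequality estimate. First I would record that, by \eqref{eq:vectorization} and the symmetry of $\Sigma^\ast$, one has $\Gamma^\ast\v{\Delta}=(\Sigma^\ast\otimes\Sigma^\ast)\v{\Delta}=\v{\Sigma^\ast\Delta\Sigma^\ast}$, so that
\[
\|\Gamma^\ast\v{\Delta}\|_\infty=\max_{i,j}\bigl|(\Sigma^\ast\Delta\Sigma^\ast)_{ij}\bigr|.
\]
On the other side, $\opnorm{\cdot}_\infty$ is the maximum absolute row sum, hence $\opnorm{\Sigma^\ast\Delta}_\infty=\max_i\sum_j|(\Sigma^\ast\Delta)_{ij}|$.

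The key algebraic step is the identity $\Sigma^\ast\Delta=(\Sigma^\ast\Delta\Sigma^\ast)K^\ast$, valid because $\Sigma^\ast K^\ast=I_p$. Writing $W=\Sigma^\ast\Delta\Sigma^\ast$, I would then fix a row index $i$ and estimate
\[
\sum_j|(\Sigma^\ast\Delta)_{ij}|=\sum_j\Bigl|\sum_k W_{ik}K^\ast_{kj}\Bigr|\le\sum_{j,k}|W_{ik}|\,|K^\ast_{kj}|\le\Bigl(\max_{a,b}|W_{ab}|\Bigr)\sum_{k,j}|K^\ast_{kj}|=\|\Gamma^\ast\v{\Delta}\|_\infty\,\|\v{K^\ast}\|_1.
\]
Taking the maximum over $i$ yields $\opnorm{\Sigma^\ast\Delta}_\infty\le\|\v{K^\ast}\|_1\cdot\|\Gamma^\ast\v{\Delta}\|_\infty$, which is the assertion.

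The argument involves no genuine obstacle; the only care required is bookkeeping of conventions — namely that $\v{\cdot}$ here is the full vectorization on $\R^{p^2}$, so $\|\v{K^\ast}\|_1=\sum_{i,j}|K^\ast_{ij}|$, and that $\|\cdot\|_\infty$ on $\R^{p^2}$ reads off as the largest entrywise absolute value of the associated matrix. Only invertibility of $\Sigma^\ast$ and its symmetry are used, and the estimate is a single application of the triangle inequality followed by the trivial $(\infty,1)$ Hölder bound, with the factoring $\sum_{k,j}|K^\ast_{kj}|$ (rather than a row-wise maximum) being exactly what produces $\|\v{K^\ast}\|_1$ in place of $\opnorm{K^\ast}_\infty$.
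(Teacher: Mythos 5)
Your proof is correct and is essentially the paper's argument in different clothing: the paper substitutes $\Delta=K^\ast XK^\ast$ so that $\Sigma^\ast\Delta=XK^\ast$ and $\Gamma^\ast\v{\Delta}=\v{X}$, which is exactly your identity $\Sigma^\ast\Delta=(\Sigma^\ast\Delta\Sigma^\ast)K^\ast$ with $W=X$, followed by the same triangle inequality and the same bound $|W_{ik}|\le\max_{a,b}|W_{ab}|$. No gap; the bookkeeping of $\v{\cdot}$ and the two norms is handled correctly.
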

\begin{proof}
Set $\Delta= K^\ast X K^\ast$, where $0\neq X\in\mathrm{Sym}(p)$. We have with $X' = X/\|\v{X}\|_\infty$, 
\begin{align*}
\frac{ \opnorm{\Sigma^\ast \Delta}_\infty}{\|\Gamma^\ast\v{\Delta}\|_\infty} &= \frac{\opnorm{X K^\ast}_\infty}{\|\v{X}\|_\infty} = \opnorm{X' K^\ast}_\infty = \max_{i} \sum_{j} \left| \sum_k  X'_{ik} K^\ast_{kj} \right| \\
& \leq \max_{i} \sum_{j}  \sum_k \left| X'_{ik} K^\ast_{kj} \right|\leq \max_{i} \sum_{j}  \sum_k \left|K^\ast_{kj} \right| = \|\v{K^\ast}\|_1. 
\end{align*}
\end{proof}

\textbf{Proofs from Section \ref{sec:glasso}} 

\begin{proof}[Proof of Theorem \ref{thm:glasso} ]
	Since $\poly{\cdot}$ is the $\ell_1$ norm, both $\polyd{\cdot}$ and $\polydr{\cdot}{\I^\ast}$ are the $\ell_\infty$ norms. Let $\cnorm{\cdot}$ be the $\ell_\infty$-norm. In this setting we have $\tau_\diamond(\I^\ast)=1$, $c_{\diamond}=1$. Moreover, 
	\begin{align*}
		\zeta_\diamond(K^\ast) 
		&= 
		\inf_{\substack{K\in\Mast\\
				\patt{K}\neq\patt{K^\ast}}} \|\Gamma^\ast\v{K-K^\ast}\|_\infty\\
		&  \geq \opnorm{K^\ast}_\infty^{-2}\inf_{\substack{K\in\Mast\\
				\patt{K}\neq\patt{K^\ast}}} \|\v{K-K^\ast}\|_\infty = \opnorm{K^\ast}_\infty^{-2} \Theta_{\min},
	\end{align*}
	where we have denoted $\Theta_{\min} = \min_{(i,j)\in S} |K_{i,j}^\ast|$.

    By \eqref{eq:irrepglasso}, we obtain
	\[
	\|\Gamma_{2,1}^\ast(\Gamma_{1,1}^\ast)^+ \v{\mathrm{sign}(K^\ast)}\|_\infty \leq \opnorm{\Gamma_{2,1}^\ast (\Gamma_{1,1}^\ast)^+}_\infty = \opnorm{\Gamma_{S^c,S}^\ast(\Gamma_{S,S}^\ast)^{-1}}_\infty \leq 1-\alpha
	\]
	and therefore \eqref{eq:irrep0} is satisfied. 
    
	Since $\opnorm{I_{p^2}-\PM}_\infty=1$ and $\Gamma_{2,1}^\ast (\Gamma_{1,1}^\ast)^+=\Q-\PM$, we arrive at 
	\[
	\opnorm{I_{p^2}-\Q}_\infty\leq \opnorm{I_{p^2}-\PM}_\infty + \opnorm{\Q-\PM}_\infty\leq 2-\alpha.
	\]
	Moreover, $(\Q)_{\cdot,S^c}=0$ and $(\Q)_{S,S}=I_{|S|}$ imply that
	\[
	\opnorm{\Q}_\infty= \max\{ \opnorm{(\Q)_{S,S}}_\infty, \opnorm{(\Q)_{S^c,S}}_\infty\} =  \max\{1,\opnorm{\Gamma_{S^c,S}(\Gamma_{S,S}^\ast)^{-1}}_\infty\}=1.
	\]
	Further, for $\Delta\in\Mast$ we have $\opnorm{\Delta}_\infty\leq d \|\v{\Delta}\|_\infty$. Therefore, 
	\begin{align*}
		\copnorm{\Sigma^\ast\Delta\otimes I_p} &= 	\opnorm{\Sigma^\ast\Delta}_\infty   \leq 
		\opnorm{\Sigma^\ast}_\infty \opnorm{\Delta}_\infty
		\leq d
		\opnorm{\Sigma^\ast}_\infty \|\v{\Delta}\|_\infty
		\\
		& \leq	d \opnorm{\Sigma^\ast}_\infty  \opnorm{K^\ast}^2 _\infty  \|\Gamma^\ast\v{\Delta}\|_\infty,
	\end{align*}
	where $d$ is the maximal number of non-zero entries in one row of $K^\ast$. In view of Lemma \ref{lem:kappainfty}, we obtain $\cH =\min\{ d \,\opnorm{\Sigma^\ast}_\infty  \opnorm{K^\ast}^2 _\infty, \|\v{K^\ast}\|_1\}$.  
	
	Set $r = \min\left\{ \tfrac{\alpha}{4}, \cH \opnorm{K^\ast}_\infty^{-2} \Theta_{\min}\right\}$ and $\lambda=\tfrac{r}{\cH}(1-\tfrac{\alpha}{2})$. Then, direct calculations show that
	\begin{align*}
		  \delta&=\frac{1}{\cH}\begin{cases}
 \frac{\alpha^2}{16}\left(1-\frac{\alpha}{3}\right), & \mbox{if }\alpha \leq 4 \cH \opnorm{K^\ast}_\infty^{-2} \Theta_{\min},\\
\frac{\alpha}{6} \cH \opnorm{K^\ast}_\infty^{-2} \Theta_{\min}, & \mbox{otherwise},
        \end{cases}
        \\
		&\leq \frac{1}{\cH}
        \begin{cases}
\frac{\alpha^2 (2-\alpha)}{8(4-\alpha)}, & \mbox{if }\alpha \leq 4 \cH \opnorm{K^\ast}_\infty^{-2} \Theta_{\min},\\
\frac{\alpha}{2}\cH \opnorm{K^\ast}_\infty^{-2} \Theta_{\min} - \frac{(\cH \opnorm{K^\ast}_\infty^{-2} \Theta_{\min})^2}{1- \cH \opnorm{K^\ast}_\infty^{-2} \Theta_{\min}}, & \mbox{otherwise,}
        \end{cases}\\
		&=\min\left\{ \frac{r}{\cH}-\lambda, \frac{\alpha\lambda}{2-\alpha}\right\}-\frac{1}{\cH}\frac{r^2}{1-r}. 
	\end{align*}
	Since $r\leq \cH \opnorm{K^\ast}_\infty^{-2} \Theta_{\min} \leq \cH \zeta_\diamond(K^\ast)$, by Theorem \ref{thm:patt_recov}, the condition 
	\[
	\|\v{\hat{\Sigma}-\Sigma^\ast}\|_\infty \leq \delta,
	\]
	implies that $\patt{\hat{K}}=\patt{K^\ast}$ (i.e. $\mathrm{sign}(\hat{K})=\mathrm{sign}(K^\ast)$) and
	\[
	\| \Gamma^\ast\v{\hat{K}-K^\ast}\|_\infty \leq \frac{r}{\cH} \leq \frac{\alpha}{4 \,\cH }.
	\]
	Finally, the bound
	\[
	\| \v{\hat{K}-K^\ast}\|_\infty  \leq \opnorm{K^\ast}_\infty^2 \| \Gamma^\ast\v{\hat{K}-K^\ast}\|_\infty  
	\]
	completes the proof.
\end{proof}

\section{Dual gauges}\label{app:dg}

In this appendix, we present general results on dual gauges and restricted dual gauges.  These results are not new, we present them for completeness. For a comprehensive treatment of these concepts, see, e.g., \cite{CONVEXANALYSIS}.

Let $V$ be a finite-dimensional vector space equipped with an inner product $\scalar{\cdot}{\cdot}$ and consider a gauge $\|\cdot\|$ on $V$; i.e., $\|\cdot\|\colon V\to[0,\infty)$ is positively homogeneous, satisfies the triangle inequality and definiteness (i.e. $\|x\|=0$ implies $x=0$). A gauge becomes a norm if additionally $\|-x\|=\|x\|$ for $x\in V$.  The dual gauge corresponding to a gauge $\|\cdot\|$ on $V$ is defined as
\[
\| y\|^\ast = \sup_{\substack{x\in V\\\|x\|\leq 1}}  \scalar{y}{x},\qquad y\in V. 
\]
This definition relies on the identification of $V$ with its dual space via the inner product.
\begin{lemma}[Characterization of the gauge subdifferential]\label{lem:subd}\ 
 Let $V$ be a vector space equipped with an inner product $\scalar{\cdot}{\cdot}$, and let $\|\cdot\|$ be a gauge on $V$ with dual gauge  $\|\cdot\|^\ast$. If $x\in V$, then 
    \begin{align*}
  \pi \in \partial\|x\|\qquad\iff\qquad     \|x\|=\scalar{\pi}{x} \quad
	\mbox{and}\quad \|\pi\|^\ast\leq 1.
    \end{align*}
\end{lemma}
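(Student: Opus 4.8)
The plan is to argue directly from the definition of the subdifferential, namely $\pi\in\partial\|x\|$ iff $\|y\|\ge\|x\|+\scalar{\pi}{y-x}$ for every $y\in V$, combined with the defining property of the dual gauge. The only auxiliary fact I will use is the Fenchel--Young type inequality $\scalar{\pi}{y}\le\|\pi\|^\ast\,\|y\|$ valid for all $\pi,y\in V$; this is immediate from the definition of $\|\cdot\|^\ast$, since if $\|y\|=0$ then $y=0$ by definiteness and both sides vanish, while otherwise positive homogeneity places $\|y\|^{-1}y$ in the unit ball of $\|\cdot\|$, so that $\scalar{\pi}{\|y\|^{-1}y}\le\|\pi\|^\ast$.

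For the implication $(\Leftarrow)$, I assume $\|x\|=\scalar{\pi}{x}$ and $\|\pi\|^\ast\le1$. Then for an arbitrary $y\in V$ the inequality above gives $\scalar{\pi}{y}\le\|\pi\|^\ast\|y\|\le\|y\|$, hence
\[
\|y\|\ \ge\ \scalar{\pi}{y}\ =\ \scalar{\pi}{x}+\scalar{\pi}{y-x}\ =\ \|x\|+\scalar{\pi}{y-x},
\]
which is precisely the subgradient inequality, so $\pi\in\partial\|x\|$.

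For the implication $(\Rightarrow)$, I assume $\pi\in\partial\|x\|$, i.e. $\|y\|\ge\|x\|+\scalar{\pi}{y-x}$ for all $y$. Plugging in $y=0$ yields $\scalar{\pi}{x}\ge\|x\|$, and plugging in $y=2x$ together with positive homogeneity ($\|2x\|=2\|x\|$) yields $\scalar{\pi}{x}\le\|x\|$; combining gives $\scalar{\pi}{x}=\|x\|$. Substituting this equality back into the subgradient inequality collapses it to $\|y\|\ge\scalar{\pi}{y}$ for every $y\in V$, so $\scalar{\pi}{y}\le1$ whenever $\|y\|\le1$, i.e. $\|\pi\|^\ast\le1$.

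There is no genuine obstacle here; the only points needing mild care are the use of definiteness (to normalize by $\|y\|$ and to dispose of the case $\|y\|=0$) and of positive homogeneity (for the $y=2x$ test), and I would note explicitly that neither uses the symmetry $\|-x\|=\|x\|$, so the statement holds for general gauges, not merely norms.
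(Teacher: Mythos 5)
Your proof is correct and complete: both directions follow cleanly from the subgradient inequality, the Fenchel--Young bound $\scalar{\pi}{y}\le\|\pi\|^\ast\|y\|$, and the test points $y=0$ and $y=2x$, and you are right that only positive homogeneity and definiteness (not symmetry) are needed, so the argument covers general gauges. The paper states this lemma without proof, citing it as standard background on dual gauges, so there is no authorial argument to compare against; your write-up is exactly the expected textbook derivation and could serve as the omitted proof.
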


\begin{remark}
If $x\neq0$, then the condition $\|x\|=\scalar{\pi}{x}$ automatically implies $\|\pi\|^\ast\geq 1$. Indeed, by definition of the dual gauge,
  \[
  \|\pi\|^\ast = \sup_{\substack{y\in V\\\|y\|\leq 1}}  \scalar{\pi}{y} \geq \langle \,\pi\, |\, \frac{x}{\|x\|}\,\rangle=1.  
  \]
  Thus, when $x\neq0$, the characterization of the gauge subdifferential can be equivalently stated as
  \begin{align*}
  \pi \in \partial\|x\|\qquad\iff\qquad     \|x\|=\scalar{\pi}{x} \quad
	\mbox{and}\quad \|\pi\|^\ast= 1.
    \end{align*}
\end{remark}

For a nonzero linear subspace $\SS$ of $V$, define the restricted dual gauge on $\SS$ by
\[
\|y\|_\SS^\ast = \sup_{\substack{x\in S\\\|x\|\leq 1}}  \scalar{y}{x},\qquad y\in \SS. 
\]
It is clear that
$\| y\|^\ast \geq \|y\|_\SS^\ast$ for $y\in \SS$,
because the supremum in $\|y\|_\SS^\ast$ is being taken over a smaller set. 

\begin{lemma}
	For $y\in\SS$, we have
	\[
	\|y\|_\SS^\ast = \min_{\xi\in \SS^\bot} \|y+\xi\|^\ast. 
	\]
\end{lemma}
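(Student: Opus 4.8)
The plan is to prove the equality $\inf_{\xi\in\SS^\bot}\|y+\xi\|^\ast=\|y\|_\SS^\ast$ and then check the infimum is attained. The direction ``$\geq$'' is immediate: for $\xi\in\SS^\bot$ and $y\in\SS$, restricting the supremum that defines $\|y+\xi\|^\ast$ to vectors $x\in\SS$ with $\|x\|\leq1$ annihilates $\scalar{\xi}{x}$, so $\|y+\xi\|^\ast\geq\sup\{\scalar{y}{x}\colon x\in\SS,\ \|x\|\leq1\}=\|y\|_\SS^\ast$. For attainment I would use that $\|\cdot\|^\ast$ is a gauge on the finite-dimensional space $V$, hence convex and continuous, and that definiteness of $\|\cdot\|$ makes its unit ball $B$ bounded, which in turn makes $\|\cdot\|^\ast$ finite and definite, hence coercive (by positive homogeneity and compactness of the Euclidean sphere). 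The reverse triangle inequality $\|y+\xi\|^\ast\geq\|\xi\|^\ast-\|{-y}\|^\ast$ then shows $\xi\mapsto\|y+\xi\|^\ast$ is coercive on the closed set $\SS^\bot$, so its infimum is attained and the $\min$ is justified.

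For the nontrivial direction ``$\leq$'', set $c=\|y\|_\SS^\ast$ and let $B^\circ=\{z\colon\|z\|^\ast\leq1\}$ be the unit ball of $\|\cdot\|^\ast$. If $c=0$ then $\scalar{y}{x}\leq0$ for all $x\in B\cap\SS$, a relative neighbourhood of $0$ in $\SS$ (as $0\in\mathrm{int}(B)$), so $\scalar{y}{x}=0$ on all of $\SS$, forcing $y\in\SS^\bot\cap\SS=\{0\}$ and $\xi=0$ works. Assume $c>0$; I claim $y/c\in B^\circ+\SS^\bot$. Since $B$ is compact with $0$ in its interior, $B^\circ$ is compact, so $B^\circ+\SS^\bot$ is closed and convex. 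If $y/c\notin B^\circ+\SS^\bot$, the separating hyperplane theorem gives $x_0\in V$ with $\scalar{y/c}{x_0}>\sup_{z\in B^\circ,\,\eta\in\SS^\bot}\scalar{z+\eta}{x_0}$; the supremum over $\eta\in\SS^\bot$ is finite only if $x_0\in(\SS^\bot)^\bot=\SS$, in which case it equals $\sup_{z\in B^\circ}\scalar{z}{x_0}=\|x_0\|$ by the bipolar identity. Then $\scalar{y}{x_0}>c\|x_0\|$ with $x_0\neq0$, so $x_0/\|x_0\|\in B\cap\SS$ contradicts $c=\sup_{x\in B\cap\SS}\scalar{y}{x}$. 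Hence $y/c=z_0+\xi_0$ with $z_0\in B^\circ$, $\xi_0\in\SS^\bot$, and taking $\xi=-c\,\xi_0\in\SS^\bot$ yields $\|y+\xi\|^\ast=c\|z_0\|^\ast\leq c=\|y\|_\SS^\ast$, which finishes the argument.

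The main obstacle is this reverse inequality, which is essentially a Hahn–Banach/polarity statement: one must verify that $B^\circ+\SS^\bot$ is closed (using compactness of $B^\circ$, i.e. that $0$ is interior to $B$, which follows from continuity of $\|\cdot\|$ and $\|0\|=0$) and that the bipolar identity $\sup_{z\in B^\circ}\scalar{z}{x}=\|x\|$ is available (standard for gauges with closed bounded unit ball, as recorded in the references). An alternative is to invoke Sion's minimax theorem for the map $(\xi,x)\mapsto\scalar{y+\xi}{x}$ on $\SS^\bot\times B$, with $B$ compact convex and the map affine in each variable: swapping $\inf_{\xi}$ and $\sup_{x}$ turns $\inf_{\xi\in\SS^\bot}\scalar{y+\xi}{x}$ into $\scalar{y}{x}$ when $x\in\SS$ and $-\infty$ otherwise, giving $\inf_{\xi}\|y+\xi\|^\ast=\|y\|_\SS^\ast$ directly, with attainment again supplied by the coercivity argument above.
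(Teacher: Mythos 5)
Your proof is correct, but it reaches the hard inequality by a genuinely different route than the paper. The paper minimizes $f(\xi)=\|y+\xi\|^\ast$ over $\SS^\bot$ and applies Fermat's rule together with the subdifferential sum rule: at a minimizer $\xi^\ast$ one extracts $x^\ast\in\partial f(\xi^\ast)\cap\SS$, and the characterization of the gauge subdifferential (Lemma~\ref{lem:subd}) gives $\|x^\ast\|\le 1$ and $\scalar{y+\xi^\ast}{x^\ast}=\|y+\xi^\ast\|^\ast$, so $x^\ast$ is a feasible point for the supremum defining $\|y\|_\SS^\ast$ that certifies $\|y\|_\SS^\ast\ge\min_\xi\|y+\xi\|^\ast$. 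You instead prove the membership $y/c\in B^\circ+\SS^\bot$ (with $c=\|y\|_\SS^\ast$) by strict separation and the bipolar identity, which \emph{constructs} the optimal shift $\xi=-c\,\xi_0$ explicitly and makes the attainment of the minimum automatic in the case $c>0$ (your separate coercivity argument is then only needed to justify calling it a min in general, and your handling of $c=0$ via a symmetric Euclidean neighbourhood of $0$ in $\SS$ is sound even though $B$ need not be symmetric). The Sion minimax variant you sketch is the most economical formulation of the same duality. What each approach buys: the paper's argument stays entirely within the subdifferential toolkit it has already set up (Lemma~\ref{lem:subd}) and needs no separation theorem, at the cost of invoking the sum rule for subdifferentials; yours is self-contained convex duality, requires checking that $B^\circ+\SS^\bot$ is closed and that bipolarity applies (both of which you verify correctly from $0\in\mathrm{int}(B)$ and closedness of $B$), and produces the minimizer rather than merely asserting its existence. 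Both are complete proofs.
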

\begin{proof}
	For any $\xi\in\SS^\bot$ and $x, y\in\SS$, we have 
	\[
	\scalar{y}{x} = \scalar{y+\xi}{x}\leq \|y+\xi\|^\ast \|x\|.
	\]
	This implies that for any $\xi\in \SS^\bot$,
	\[
	\|y\|_\SS^\ast = \sup_{\substack{x\in \SS\\\|x\|\leq 1}}  \scalar{y}{x} \leq \|y+\xi\|^\ast \sup_{\substack{x\in \SS\\\|x\|\leq 1}} \|x\|  = \|y+\xi\|^\ast.
	\]
	Fix $y\in\SS$. For $\xi\in V$ define 
	\[
	f(\xi)=\|y+\xi\|^\ast\quad \mbox{ and }\quad\delta_{\SS^\bot}(\xi) = \begin{cases} 0, & \xi \in \SS^\bot, \\ \infty & \mbox{otherwise}.\end{cases}
	\]
	Clearly 
	\[
	\min_{\xi\in\SS^\bot} f(\xi) = \min_{\xi\in V} \left\{ f(\xi)+\delta_{\SS^\bot}(\xi)\right\}.
	\]
	Because $f$ is continuous, convex and coercive on a closed subspace $\SS^\bot$, the minimum of $f$ is attained. 
	At an optimal point $\xi^\ast$ we have
	$0\in \partial f(\xi^\ast)+\partial\delta_{\SS^\bot}(\xi^\ast)$. Therefore, there exist $x^\ast\in \partial f(\xi^\ast)$ and $w^\ast\in \partial\delta_{\SS^\bot}(\xi^\ast)$ with $x^\ast+w^\ast=0$.  It is well known that $\partial\delta_{\SS^\bot}(\xi^\ast)=\SS$, which implies that $x^\ast=-w^\ast\in\SS$. Since $x^\ast\in \partial f(\xi^\ast)=\partial \|y+\xi^\ast\|^\ast$, by Lemma \ref{lem:subd}, we obtain that $\|x^\ast\|= 1$ and 
	$\scalar{y+\xi^\ast}{x^\ast} = \|y+\xi^\ast\|^\ast$.    Thus, 
	\[
	\|y\|_\SS^\ast = \sup_{\substack{x\in \SS\\\|x\|\leq 1}}  \scalar{y}{x} \geq \scalar{y}{x^\ast} = \scalar{y+\xi^\ast}{x^\ast}=\min_{\xi\in\SS^\bot}\|y+\xi\|^\ast,
	\]
	which ends the proof.
\end{proof}

Let $\opnorm{\cdot}$ denote the operator gauge induced by the gauge $\|\cdot\|$, that is, for a linear operator $A$ on $V$, 
\[
\opnorm{A} = \sup_{\substack{x\in V\\\|x\|\leq 1}}  \| A x\|. 
\]
Also, let $\PP_\SS$ be the orthogonal projection from $V$ onto $\SS$.
\begin{thm}\label{thm:restricted}
	We have 
	\[
	\| y\|^\ast = \|y\|_\SS^\ast,\qquad \forall\, y\in \SS,
	\]
	if and only if $\opnorm{\PP_\SS}= 1$.
\end{thm}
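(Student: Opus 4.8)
The plan is to translate the equality of the two gauges on $\SS$ into a statement about the dual unit ball, and to exploit the bipolar identity $\|\cdot\|^{\ast\ast}=\|\cdot\|$. This identity is available here because every gauge on a finite-dimensional space is convex and finite, hence continuous, so its unit ball is compact, convex, and contains the origin in its interior; the bipolar theorem then applies. I would also first dispose of the two inequalities that hold unconditionally: $\|y\|^\ast\geq\|y\|_\SS^\ast$ for $y\in\SS$ (the supremum defining $\|y\|_\SS^\ast$ runs over a smaller set), and $\opnorm{\PP_\SS}\geq 1$ (any $x\in\SS$ with $\|x\|\leq 1$ satisfies $\PP_\SS x=x$, and $\SS\neq\{0\}$). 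Thus it suffices to prove that $\|y\|^\ast\leq\|y\|_\SS^\ast$ for all $y\in\SS$ is equivalent to $\opnorm{\PP_\SS}\leq 1$.

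For the implication $\opnorm{\PP_\SS}\leq 1\Rightarrow$ equality, I would fix $y\in\SS$ and, for any $x$ with $\|x\|\leq 1$, use self-adjointness of $\PP_\SS$ together with $\PP_\SS y=y$ to write $\scalar{y}{x}=\scalar{y}{\PP_\SS x}$; since $\PP_\SS x\in\SS$ and $\|\PP_\SS x\|\leq\opnorm{\PP_\SS}\|x\|\leq 1$, this gives $\scalar{y}{x}\leq\|y\|_\SS^\ast$, and taking the supremum over $x$ yields $\|y\|^\ast\leq\|y\|_\SS^\ast$. For the converse, assume the gauges agree on $\SS$ and fix an arbitrary $x\in V$. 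By the bipolar identity, $\|\PP_\SS x\|=\sup_{\|w\|^\ast\leq 1}\scalar{\PP_\SS x}{w}=\sup_{\|w\|^\ast\leq 1}\scalar{x}{\PP_\SS w}$. The key point is that each $\PP_\SS w$ with $\|w\|^\ast\leq 1$ lies in the dual unit ball: it belongs to $\SS$, and for every $x'\in\SS$ with $\|x'\|\leq 1$ we have $\scalar{\PP_\SS w}{x'}=\scalar{w}{x'}\leq\|w\|^\ast\leq 1$, so $\|\PP_\SS w\|_\SS^\ast\leq 1$, whence by hypothesis $\|\PP_\SS w\|^\ast\leq 1$. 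Consequently $\scalar{x}{\PP_\SS w}\leq\|x\|$, and taking the supremum over $w$ gives $\|\PP_\SS x\|\leq\|x\|$, i.e. $\opnorm{\PP_\SS}\leq 1$.

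The main obstacle is a matter of hygiene rather than depth: one must make sure the bipolar identity is legitimate for the general, possibly non-symmetric, gauges considered in this appendix, which is why I flagged the continuity/compactness remark above. Should one prefer to avoid invoking the bipolar theorem, an equivalent route is available through the preceding lemma $\|y\|_\SS^\ast=\min_{\xi\in\SS^\bot}\|y+\xi\|^\ast$: it identifies the unit ball of $\|\cdot\|_\SS^\ast$ inside $\SS$ with $\PP_\SS(B^\ast)$, where $B^\ast=\{w\in V:\|w\|^\ast\leq 1\}$, while the unit ball of $\|\cdot\|^\ast$ restricted to $\SS$ is plainly $B^\ast\cap\SS$; the two gauges coincide on $\SS$ iff $B^\ast\cap\SS=\PP_\SS(B^\ast)$, and since $\PP_\SS(B^\ast)\subseteq\SS$ always and any $y\in B^\ast\cap\SS$ satisfies $y=\PP_\SS y\in\PP_\SS(B^\ast)$, this is equivalent to $\PP_\SS(B^\ast)\subseteq B^\ast$, which by comparison of support functions is exactly $\opnorm{\PP_\SS}\leq 1$.
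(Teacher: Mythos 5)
Your proof is correct and follows essentially the same route as the paper's: the ``if'' direction is the identical self-adjointness computation $\scalar{y}{x}=\scalar{y}{\PP_\SS x}$, and the ``only if'' direction is the same support-function argument showing $\|\PP_\SS x\|\le\|x\|$, with the bipolar identity invoked on $V$ rather than (as in the paper) within $\SS$ for the restricted gauge --- a cosmetic difference, and your remark justifying the bipolar theorem for non-symmetric gauges is a welcome point of hygiene. The only other deviation is that you obtain $\opnorm{\PP_\SS}\ge 1$ directly from $\SS\neq\{0\}$ while the paper uses idempotency of $\PP_\SS$; both are fine.
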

\begin{proof}
	Assume that $\opnorm{\PP_\SS}= 1$.  Then, for any $x\in V$, we have $z = \PP_{\SS}x\in\SS$ and
	\[
	\|z\|\leq  \opnorm{\PP_{\SS}}   \| x\|=  \| x\|.
	\]
	Then, for $y\in\SS$, by definition of $\PP_{\SS}$, 
	\[
	\| y\|^\ast  = \sup_{\substack{x\in V\\\|x\|\leq 1}}  \scalar{y}{x} = \sup_{\substack{x\in V\\\|x\|\leq 1}}  \scalar{\PP_{\SS} y}{x} = \sup_{\substack{x\in V\\\|x\|\leq 1}}  \scalar{ y}{\PP_{\SS} x} \leq \sup_{\substack{z\in \SS\\\|z\|\leq 1}}  \scalar{ y}{z} = \|y\|_\SS^\ast.
	\]
	Since we already know that $\| y\|^\ast \geq \|y\|_\SS^\ast$, we obtain $\| y\|^\ast = \|y\|_\SS^\ast$. 
	
	Conversely, assume that  $\| y\|^\ast = \|y\|_S^\ast$ for any $y\in\SS$. 
	We have 
	\begin{align*}
		\|\PP_\SS x\| &= \sup_{\substack{y\in \SS\\\|y\|_\SS^\ast\leq 1}}\scalar{\PP_\SS x}{y} = \sup_{\substack{y\in \SS\\\|y\|_\SS^\ast\leq 1}}\scalar{x}{y} = \sup_{\substack{y\in \SS\\\|y\|^\ast\leq 1}}\scalar{x}{y} \\
		& \leq \sup_{\substack{y\in V\\\|y\|^\ast\leq 1}}\scalar{x}{y}  = \|x\|.
	\end{align*}
	Thus, $\opnorm{\PP_\SS}\leq 1$. Since $\PP_{\SS}^2=\PP_{\SS}$, we obtain $\opnorm{\PP_\SS} \leq \opnorm{\PP_\SS}^2$, i.e., $\opnorm{\PP_\SS}\geq 1$.
\end{proof}

\section{Pattern recovery for skewed gauges}\label{app:pr}

In this Appendix we study the problem of pattern recovery in the setting when $\tau_\diamond$ does not exist.   In relation to Example \ref{ex:simple} we call ``skewed'' 
the gauges for which $f_I\notin B^\ast$ or equivalently, $\polyd{f_{\I^\ast}}>1$.
We consider a simplified setting compared to Theorem \ref{thm:patt_recov} in the case when the threshold $\tau_{\diamond}$ does not exist.

The inequality $\polyd{f_{\I^\ast}}>1$ implies that for arbitrary norm $\|\cdot\|$ on $\SS_{\I^\ast}^\bot$, there exists positive $\eta$ such that 
\[
\polydr{f_{\I^\ast}}{\I^\ast} <\min_{\pi^\bot\in \SS_{\I^\ast}^\bot\colon \|\pi^\bot\|\leq \eta} \polyd{f_{\I^\ast} + \pi^\bot}.
\]
Take $\|\cdot\| = \cnorm{D \,\cdot}$, fixing some $\eta>0$. 

We will show that the pattern recovery in the case when $\tau_\diamond(\I^\ast)$ does not exist will typically prohibit pattern recovery. This highlights that the existence of a positive threshold $\tau$ is crucial for the pattern recovery. To show this, we assume a very simplified setting where $p$ is fixed, $n\to\infty$ and $\lambda=\lambda_n$ satisfies $\lambda_n\to 0$ together with $\sqrt{n}\lambda_n\to\infty$. To make our argument simpler, we assume a very strong irrepresentability condition which is (cf. \eqref{eq:irrep0})
\[
\cnorm{ (\Q-\PM) D f_{\I^\ast}} \leq\frac{\eta}{2}. 
\]
Suppose 
\begin{align}\label{eq:x}
	x:= \cH(\cnorm{\v{\hat{\Sigma}-\Sigma^\ast}} + \lambda_n c_{\diamond} )\leq \frac{r}{\copnorm{\Q}}-\frac{r^2}{1-r} 
\end{align}
for some $r\in (0,\min\{\zeta_\diamond(K^\ast),1\})$. We note that $x$ can be made arbitrarily small (with high probability) by taking sufficiently large $n$. 

Then, by Lemma \ref{lem:Deltabound}, $\patt{\tilde{K}}=\patt{K^\ast}$ (in particular, $\vp{\tilde{\Pi}}=f_{\I^\ast}$) and 
\[
\cnorm{\Gamma^\ast\v{\tilde{K}-K^\ast}}\leq r. 
\]
We will show that we have $\normd(\Pi)=\polyd{\vp{\Pi}}>1$, which, through Lemma \ref{lem:hatKtildeK},  will imply that $\hat{K}\neq\tilde{K}$. Since $\proj(\Pi) = \tilde{\Pi}$, we have 
\[
\vp{\Pi} = f_{\I^\ast}+\vp{\proj^\bot(\Pi)}. 
\]
If $\|\v{\proj^\bot(\Pi)}\|_{\Gamma^\ast}\leq \eta$, we obtain $\normd(\Pi)>1$. Indeed,
\[
1 = \polydr{f_{\I^\ast}}{\I^\ast} < \min_{\pi^\bot\in \SS_{\I^\ast}^\bot\colon \|\pi^\bot\|\leq \eta}  \polyd{f_{\I^\ast} + \pi^\bot}\leq \polyd{\vp{\Pi}} = \normd(\Pi).
\]
In the course of the proof of Lemma \ref{lem:Vbound}, we obtain 
\begin{align*}
	\cnorm{\v{\proj^\bot(\Pi)}} &\leq \cnorm{ (\Q-\PM) D f_{\I^\ast}}  + \frac{1}{\lambda_n} \left( \cnorm{\v{\Sigma^\ast-\hat{\Sigma}+R}}\right) \\ 
	&\leq  \frac{\eta}{2}+ \frac{1}{\lambda_n} \left( \cnorm{\v{\hat{\Sigma}-\Sigma^\ast}}+\cnorm{\v{R}}\right)\\
	&\leq  \frac{\eta}{2}+ \frac{1}{\lambda_n} \left( \cnorm{\v{\hat{\Sigma}-\Sigma^\ast}}+\frac1{\cH}\frac{r^2}{1-r}\right)\leq \frac{\eta}{2}+\frac{r}{\cH\lambda_n \copnorm{\Q}}-c_\diamond. 
\end{align*}
where we have used Lemma \ref{lem:Rbound} and \eqref{eq:x}. 

If $x$ is small enough (e.g. $x\leq \min\{ 0.1/\copnorm{\Q}, \zeta_\diamond(K^\ast)/1.2\}$ will do), then by Corollary \ref{cor:bound} we can take  $r=\copnorm{\Q} x+ 2 \copnorm{\Q}^3 x^2$ (which on a set with high probability is allowed for sufficiently large $n$). We obtain
\begin{align*}
	\cnorm{\v{\proj^\bot(\Pi)}} 
	&\leq  \frac{\eta}{2} + \frac{\cnorm{\v{\hat{\Sigma}-\Sigma^\ast}}}{\lambda_n} + 2\cH \lambda_n \copnorm{\Q}  \left(\frac{\cnorm{\v{\hat{\Sigma}-\Sigma^\ast}}}{\lambda_n} +  c_{\diamond}\right)^2
\end{align*}
In our setting, we have Gaussian asymptotics of $\sqrt{n}(\hat{\Sigma}-\Sigma^\ast)$ and therefore
\[
\frac{\cnorm{\v{\hat{\Sigma}-\Sigma^\ast}}}{\lambda_n} = \frac{\cnorm{\sqrt{n}\,\v{\hat{\Sigma}-\Sigma^\ast}}}{\sqrt{n}\lambda_n} \to 0
\]
in probability. 
This implies that with high probability $\normd(\Pi)>1$, i.e. $\hat{K}$ does not recover the true pattern. 

\section{Choice of tuning parameters in the SLOPE norm}\label{app:slope}
The dual SLOPE norm is a signed permutahedron, well-studied object. Its geometry depends on the weights $w$ of the SLOPE norm, $\poly{\cdot}=\|\cdot\|_w$. In this Appendix we will present a quantitative approach which leads to improved pattern recovery properties of the SLOPE estimator.  

If $\cnorm{\cdot}=\|\cdot\|_\infty$, then $\delta$ from Theorem \ref{cor:main} depends on $w$ only through $c_\diamond$ and $\tau_\diamond$. If $w_1=1$, one readily checks 
\[
\|\pi\|_\infty \leq\max_{i=1,\ldots,m}\left\{\frac{ \sum_{k=1}^i |\pi|_{(k)}}{\sum_{k=1}^i w_k}\right\} = \|\pi\|_w^\ast,
\]
so that we may take $c_\diamond = 1$. We write $\tau_w(\I)$ for $\tau_\diamond(\I)$ to stress its dependence on the weights $w$.   Consequently, maximizing $\delta$ in Theorem \ref{cor:main} reduces to maximizing $ \tau_\diamond(\I^\ast)$ provided the true pattern is known. In practice, one rarely knows $\I^\ast$ exactly, but a domain expert may assume, for instance, some prior knowledge about the pattern, e.g., that there is a single non-null cluster. In such case we  are interested in 
\[
w^\ast = \argmax _{w\colon w_1=1} \min_{\I\in \mathcal{E}} \tau_w(\I)
\]
where $\mathcal{E}\subset\mathcal{I}$ is the subset of all patterns.   Below, we collect several results for this tuning problem, establishing the first quantitative framework for selecting SLOPE weights that provably enhance pattern recovery performance. Note that the weights optimized for estimation or FDR control (e.g.  \cite{bogdan2015slope}) will generally differ from those that maximize probability of pattern recovery. 

We state the following results without proof.

\begin{lemma}[No prior knowledge]\label{lem:D1}
If $\mathcal{E}=\mathcal{I}$, then 
	\[
	\min_{\I\in\mathcal{I}} \tau_w(\I) = \min \left\{\min_{i=1,\ldots,m-1}  \frac{w_i-w_{i+1}}{2} ,w_m\right\}
	\]
and $w^\ast$ is given by
	\[
	w_i^\ast = \frac{m+\frac12-i}{m-\frac12}, \quad i=1,\ldots,m.
	\]
	One has 
	\[
	\min_{\I\in\mathcal{I}} \tau_{w^\ast}(\I)  = \frac{1}{2m-1}.
	\]
\end{lemma}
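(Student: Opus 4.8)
\emph{Proposal.} The plan is to identify each threshold $\tau_w(\I)$ with an $\ell_\infty$-distance inside a product of permutohedra, and then to optimise the resulting combinatorial quantity. I work throughout with strictly decreasing positive weights $w_1>\dots>w_m>0$ — the regime in which $\|\cdot\|_w$ is a genuine polyhedral norm with the expected facial combinatorics, and which contains the claimed optimiser $w^\ast$. The first step is to prove
\[
\tau_w(\I)=\mathrm{dist}_{\ell_\infty}\!\big(f_\I,\ \mathrm{relbd}\,F_\I\big),
\]
the $\ell_\infty$-distance from the face projection to the relative boundary of the dual face (here $\cnorm{\cdot}=\|\cdot\|_\infty$ and $\cnorm{D\pi^\bot}=\|\pi^\bot\|_\infty$, since $D$ merely duplicates entries). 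Indeed $f_\I\in\mathrm{ri}(F_\I)$ here, so $\polyd{f_\I}=1$ and Lemma~\ref{lem:tau} gives $\tau_w(\I)=\inf\{\|\pi^\bot\|_\infty:\pi^\bot\in\SS_\I^\bot,\ \polyd{f_\I+\pi^\bot}>1\}$. Every vertex $v$ of $B^\ast$ lying on $F_\I$ satisfies $\PP_{\I}v=f_\I$, so $F_\I-f_\I\subseteq\SS_\I^\bot$; since $\dim F_\I=m-s=\dim\SS_\I^\bot$ (with $s$ the number of non-zero clusters of the pattern) this forces $\mathrm{aff}(F_\I)=f_\I+\SS_\I^\bot$, and as $F_\I$ is a face, $B^\ast\cap\mathrm{aff}(F_\I)=F_\I$. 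Hence $\{\pi^\bot\in\SS_\I^\bot:\polyd{f_\I+\pi^\bot}\le1\}=F_\I-f_\I$, which yields the displayed identity.

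Next I would spell out $F_\I$ and $f_\I$. A SLOPE pattern splits the coordinates into ordered signed non-zero clusters $G_1\succ\cdots\succ G_s$ and one zero cluster $G_0$; setting $c_j=|G_1|+\cdots+|G_j|$, the vertices of $B^\ast$ on $F_\I$ are exactly the signed permutations of $w$ carrying, on each $G_j$ with $j\ge1$, the prescribed signs and a permutation of the block $(w_{c_{j-1}+1},\dots,w_{c_j})$, and on $G_0$ an arbitrary signed permutation of $(w_{c_s+1},\dots,w_m)$. Thus $F_\I$ is the product of the permutohedra of those non-zero blocks and the signed permutohedron of the last block, and $f_\I$ is the tuple of their centres: the block average times the signed indicator of $G_j$, and $0$ on $G_0$. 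Since $\|\cdot\|_\infty$ on $\R^m$ is the maximum of blockwise $\ell_\infty$-norms and the relative boundary of a product is ``one factor on its boundary, the others free'', the distance factorises, and a short support-function computation (the $\ell_\infty$-ball has support function $\|\cdot\|_1$) gives, for a decreasing block $a_1\ge\cdots\ge a_n$ of mean $\bar a$,
\[
\mathrm{dist}_{\ell_\infty}\!\big(\bar a\,\mathbf 1,\ \mathrm{relbd}\,\mathrm{perm}(a)\big)=\min_{1\le q\le n-1}\frac{\sum_{k=1}^q a_k-q\bar a}{\min(q,n-q)},\qquad \mathrm{dist}_{\ell_\infty}\!\big(0,\ \partial\,\mathrm{sperm}(a)\big)=\frac1n\sum_{k=1}^n a_k,
\]
the first being the distance within $\{\sum x_i=\sum a_i\}$ to the facet $\{\sum_{i\in S}x_i=\sum_{k\le|S|}a_k\}$, the second the largest $t$ with $qt\le\sum_{k\le q}a_k$ for all $q$. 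So $\tau_w(\I)$ is the minimum over the non-zero clusters of the first expression applied to their blocks and of the bottom-block average $w_{(c_s+1,m)}$; the first expression is the function $h$ of Lemma~\ref{lem:SLOPEtuning}, consistent with Section~\ref{sec:num}.

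With these formulas the lemma follows in two moves. For the \emph{upper bound}, the pattern with all coordinates non-zero, strictly ordered except for a single tied pair at ranks $i,i+1$, has exactly one non-zero cluster of size $2$ on the block $(w_i,w_{i+1})$ (rest singletons, no zero cluster), so $\tau_w(\I)=\tfrac12(w_i-w_{i+1})$; the pattern with one zero coordinate and all others strictly ordered has $\tau_w(\I)=w_m$. Thus $\min_\I\tau_w(\I)\le v^\ast:=\min\{\min_{1\le i\le m-1}\tfrac12(w_i-w_{i+1}),\,w_m\}$. For the \emph{lower bound}, a bottom-block average is an average of weights each $\ge w_m\ge v^\ast$, and for a non-zero block $a$ of size $n$ one uses the identity $\sum_{k\le q}(a_k-\bar a)=\tfrac1n\sum_{k\le q}\sum_{k'>q}(a_k-a_{k'})$, the bound $a_k-a_{k'}\ge(k'-k)\,2v^\ast$ (the block's consecutive gaps are among the $w_i-w_{i+1}$, hence $\ge2v^\ast$), and $\sum_{k\le q}\sum_{k'>q}(k'-k)=\tfrac{nq(n-q)}2$, to get $\frac{\sum_{k\le q}(a_k-\bar a)}{\min(q,n-q)}\ge v^\ast\max(q,n-q)\ge v^\ast$. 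Hence every term in the minimum is $\ge v^\ast$, so $\tau_w(\I)\ge v^\ast$ for all $\I$ and $\min_\I\tau_w(\I)=v^\ast$. Finally, optimising over $w$ with $w_1=1$: summing $w_i-w_{i+1}\ge2v^\ast$ over $i$ gives $1-w_m\ge2v^\ast(m-1)$, which with $w_m\ge v^\ast$ forces $v^\ast(2m-1)\le1$; equality requires all these inequalities tight, forcing $w_m=\frac1{2m-1}$ and $w_i-w_{i+1}=\frac2{2m-1}$, i.e. $w_i^\ast=\frac{m-i+1/2}{m-1/2}$ (so $w_1^\ast=1$), with $\min_\I\tau_{w^\ast}(\I)=\frac1{2m-1}$.

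The step I expect to be the crux is the second one: establishing the product structure of the faces of the dual SLOPE ball, verifying that the face projection $f_\I$ sits at the centre of the corresponding product of (signed) permutohedra, and carrying out the elementary-but-fussy $\ell_\infty$ distance-to-facet computation for a permutohedron. Once those geometric facts are secured, the two bounds and the concluding linear-programming argument are routine.
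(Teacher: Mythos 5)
The paper states Lemma~\ref{lem:D1} (and Lemma~\ref{lem:SLOPEtuning}) explicitly \emph{without proof}, so there is no in-paper argument to compare against; your proposal has to stand on its own, and I believe it does. The chain of reductions is sound: the identification $\tau_w(\I)=\mathrm{dist}_{\ell_\infty}(f_\I,\mathrm{relbd}\,F_\I)$ follows from Lemma~\ref{lem:tau} once you have checked $F_\I-f_\I\subseteq\SS_\I^\bot$ together with the dimension count $\dim F_\I=m-s=\dim\SS_\I^\bot$ (this verification is genuinely needed --- it is exactly what makes $\SS_\I^\bot$-perturbations stay in $\mathrm{aff}(F_\I)$, the point used implicitly in the paper's proof of Lemma~\ref{lem:tau_positive}); the product-of-permutohedra description of the dual SLOPE faces and the centroid identification of $f_\I$ are standard and correctly derived from the max-representation of $\|\cdot\|_w$; the facet-distance formula $c/\min(q,n-q)$ and the inradius $\bar a$ of the signed permutohedron check out by LP duality; the two test patterns (one tied pair at ranks $i,i+1$; one zero coordinate) give the upper bound; and the double-sum identity with the gap bound $a_k-a_{k'}\ge 2v^\ast(k'-k)$ gives the matching lower bound, after which the concluding inequality $1-w_m\ge 2v^\ast(m-1)$, $w_m\ge v^\ast$ pins down $w^\ast$ and the value $1/(2m-1)$. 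The one caveat is your standing assumption $w_1>\dots>w_m>0$: the lemma is stated for weakly decreasing nonnegative weights, and when some $w_i=w_{i+1}$ (or $w_m=0$) the face $F_\I$ of the tied-pair pattern degenerates to lower dimension, so both the dimension count and the ``distance to the relative boundary of a point'' computation break down; one then needs a separate (easy) argument that some pattern has $\tau_w(\I)=0$, consistent with $v^\ast=0$. Since the maximization over $w$ is what the lemma is really for and the optimizer lies strictly inside the region you work in, this does not affect the conclusions about $w^\ast$ and $1/(2m-1)$, but you should state the degenerate case explicitly rather than leave it implicit.
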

These weights coincide with the OSCAR penalty \cite{OSCAR08}: we have
\[
\|x\|_{w^\ast} = \sum_{i=1}^m(c(i-1)+1)|x|_{(i)}= \sum_{j=1}^m |x_j| + c \sum_{1\leq j<k\leq m} \min\left\{ |x_j|, |x_k|\right\}
\]
with $c=-1/(m-1/2)$.

The SLOPE pattern $\I_x\in \mathcal{I}$ is conveniently identified with vector $M_x\in\R^m$ defined by 
\[
(M_x)_i = \mathrm{sign}(x_i )\mathrm{rank}(|x|)_i,\quad i=1,\ldots,m,
\]
see e.g. \cite[Definition 2.1]{7aut}. We write $\tau_w(M_x)=\tau_w(\I_x)$. By the symmetry of the SLOPE norm, one has 
\[
\tau_w(M) = \tau_w( \pm M_{\sigma(1)},\ldots, \pm M_{\sigma(m)}). 
\]
Let $w_{(i,j)}$ denote the average $\frac{1}{j-i+1} \sum_{k=i}^j w_k$ and let $a^{(k)}$ denote the vector $(a,\ldots,a)\in\R^k$.  Below we present the thresholds $\tau_w$ calculated for all patterns with unique non-zero cluster.

\begin{lemma}\label{lem:SLOPEtuning}
	If $M=(1^{(k)}, 0^{(m-k)})$, then 
	\[
	\tau_w(M) = \min\left\{ h(w_1,\ldots,w_k), w_{(k+1,m)}\right\},
	\]
	where 
	\[
	h(w_1,\ldots,w_k) = \begin{cases}
		w_{(1,i)} - w_{(1,k)}, & \mbox{ if }k=2i,\\
		\min\{ w_{(1,i)} - w_{(1,k)}, w_{(1,k)} - w_{(2+i,k)}\}, & \mbox{ if }k=2i+1.
	\end{cases}
	\]
In particular:
\begin{itemize}
	\item If $M=1^{(m)}$, then 
	\[
	\tau_w(M) = h(w_1,\ldots,w_m),
	\]
	which is maximized at  
	\[
	w^\ast = \begin{cases}
		(1^{(i)},0^{(i)}), & m=2i, \\
		(1^{(i)}, \frac12, 0^{(i)}), & m=2i+1.
	\end{cases}
	\]
	We have $\tau_{w^\ast}(M) = \frac12$.
	\item If $M=(1^{(k)}, 0^{(m-k)})$ for $k=2,\ldots,m-1$, then 
	\begin{align*}
		w^\ast = \begin{cases}
			( 1^{(i)}, (1/3)^{(m-i)} ), & k=2i, \\
			( 1^{(i)}, 2/3, (1/3)^{(m-i-1)} ) , & k=2i+1.
		\end{cases}
	\end{align*}
	and $\tau_{w^\ast}(M) = \frac13$. 
	\item If $M=(1, 0^{(m-1)})$. Then, $\tau_w(M) = \min\{w_1, w_{(2,m)}\}$, which is maximized for 
	\[
	w^\ast=1^{(m)}.
	\] 
	In this case we obtain the LASSO penalty with $\tau_{w^\ast}(M)=1$.
	\end{itemize}
\end{lemma}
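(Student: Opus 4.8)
The plan is to compute $\tau_w(M)$ from its geometric meaning (cf.\ Lemma \ref{lem:tau_positive} and the discussion after Definition \ref{def:tau}) as an $\ell_\infty$ inradius of the face $F_\I$ about its face projection $f_\I$, then to evaluate that inradius via a product decomposition of $F_\I$ together with two elementary linear programs, and finally to optimise the resulting closed form over the weights; put $W_i=w_1+\dots+w_i$. Fix a representative $x^\ast=a\,(1^{(k)},0^{(m-k)})$, $a>0$, of $M=(1^{(k)},0^{(m-k)})$. The active atoms of $B^\ast=\mathrm{conv}\{\text{signed permutations of }w\}$ at $x^\ast$ are the signed permutations of $w$ maximising $\sum_{j\le k}v_j$: their first $k$ coordinates form a permutation of $(w_1,\dots,w_k)$ with positive signs, and their remaining $m-k$ coordinates a signed permutation of $(w_{k+1},\dots,w_m)$. (This requires $w_{k+1}>0$ when $k<m$, which is exactly the condition for $x^\ast$ to have pattern $M$ --- otherwise the trailing zeros are not pinned.) Taking convex hulls, $F_\I=\partial\poly{x^\ast}$ decomposes as a product $\Pi\times\Pi^{\pm}$, where $\Pi\subset\R^k$ is the permutohedron of $(w_1,\dots,w_k)$ in the first $k$ coordinates and $\Pi^{\pm}\subset\R^{m-k}$ is the signed permutohedron of $(w_{k+1},\dots,w_m)$ --- equivalently the dual SLOPE unit ball for those weights --- in the remaining coordinates (for $k=m$ the second factor is absent). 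Moreover $\SS_\I=\R\cdot(1^{(k)},0^{(m-k)})$, so $\SS_\I^\bot=\{z\in\R^m:z_1+\dots+z_k=0\}$, and $f_\I=\PP_\I v=w_{(1,k)}\,(1^{(k)},0^{(m-k)})$, which is the pair (centroid of $\Pi$, centroid $0$ of $\Pi^{\pm}$); in particular $f_\I\in\mathrm{ri}(F_\I)$, so $\tau_w(M)>0$ by Lemma \ref{lem:tau_positive}. Since $\cnorm{Dz}=\|z\|_\infty$ for $z\in\R^m$, and $B^\ast\cap\mathrm{aff}(F_\I)=F_\I$ with $\mathrm{aff}(F_\I)=f_\I+\SS_\I^\bot$, Definition \ref{def:tau} becomes $\tau_w(M)=\sup\{t\ge0:f_\I+(\SS_\I^\bot\cap[-t,t]^m)\subseteq F_\I\}$; and because $\SS_\I^\bot\cap[-t,t]^m$ is the product of $\{y\in\R^k:\|y\|_\infty\le t,\ \sum_{j\le k}y_j=0\}$ with $[-t,t]^{m-k}$, this supremum is the minimum of the two corresponding block inradii.

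For the $\Pi^{\pm}$ block, invariance of $\Pi^{\pm}$ under coordinate permutations and sign changes gives $[-t,t]^{m-k}\subseteq\Pi^{\pm}$ iff the vertex $(t^{(m-k)})$ lies in $\Pi^{\pm}$; evaluating the dual SLOPE norm of $(t^{(m-k)})$ for weights $(w_{k+1},\dots,w_m)$ (its maximum over prefixes is attained at the full prefix) gives the condition $t\le w_{(k+1,m)}$, so that block inradius is $w_{(k+1,m)}$. For the $\Pi$ block, I would use the permutohedron's facet description, $\Pi=\{v:\sum_{j\le k}v_j=W_k,\ \sum_{j\in A}v_j\le W_{|A|}\text{ for all }\emptyset\ne A\subsetneq\{1,\dots,k\}\}$: the translated slice $f_\I+(\SS_\I^\bot\cap[-t,t]^m)$ meets $\Pi$ in the first block iff, for every such $A$, $|A|\,w_{(1,k)}+\max\{\sum_{j\in A}y_j:\|y\|_\infty\le t,\ \sum_{j\le k}y_j=0\}\le W_{|A|}$, and the inner linear program has value $\min(|A|,k-|A|)\,t$ (cap the $|A|$ chosen coordinates at $t$, and balance against the $k-|A|$ others, each $\ge-t$; both bounds are attained). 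Hence that block inradius is $\min_{1\le s\le k-1}\frac{W_s-s\,w_{(1,k)}}{\min(s,k-s)}=\min_{1\le s\le k-1}\frac{s(w_{(1,s)}-w_{(1,k)})}{\min(s,k-s)}$. A short monotonicity argument identifies this with $h(w_1,\dots,w_k)$: for $s\le\lfloor k/2\rfloor$ the summand equals $w_{(1,s)}-w_{(1,k)}$, which is nonincreasing in $s$; for $s\ge\lceil k/2\rceil$ it rewrites (clear denominators) as $w_{(1,k)}-w_{(s+1,k)}$, which is nondecreasing in $s$ because suffix averages of a nonincreasing sequence decrease; so the minimum is attained at $s\in\{\lfloor k/2\rfloor,\lceil k/2\rceil\}$, where it equals $h$ (the two central values coincide for $k$ even, and their minimum is $h$ for $k$ odd). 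Combining the two blocks gives $\tau_w(M)=\min\{h(w_1,\dots,w_k),\ w_{(k+1,m)}\}$; the tail term is vacuous for $k=m$, and for $k=1$ the $\Pi$ factor is a single point (no constraint from that block), so $\tau_w(M)=w_{(2,m)}=\min\{w_1,w_{(2,m)}\}$, the $w_1$ being redundant as $w_{(2,m)}\le w_1$.

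For the three displayed special cases I would optimise this closed form over weights normalised by $w_1=1$, $w_1\ge\dots\ge w_m\ge0$. For $M=1^{(m)}$ only the $h$-term survives; writing $h=\min\{w_{(1,\lfloor m/2\rfloor)}-w_{(1,m)},\ w_{(1,m)}-w_{(\lceil m/2\rceil+1,m)}\}$ and bounding the two constituents by $1-w_{(1,m)}$ and by $w_{(1,m)}$ respectively gives $h\le\tfrac12$, with equality forcing precisely the stated $w^\ast$ (for which $h=\tfrac12$). For $2\le k\le m-1$, bounding the two $h$-constituents by $1-w_{(1,k)}$ and $w_{(1,k)}-w_{k+1}$ and the tail term by $w_{k+1}$, one gets $\tau_w(M)\le\min\{1-a,\,a-b,\,b\}$ with $a=w_{(1,k)}$, $b=w_{k+1}$, $0\le b\le a\le1$; maximising $\min\{1-a,a-b,b\}$ forces $a=\tfrac23$, $b=\tfrac13$, value $\tfrac13$, attained by the displayed $w^\ast$ (for which every summand of the $\min$ defining $h$ equals $\tfrac13$). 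For $M=(1,0^{(m-1)})$, $\min\{w_1,w_{(2,m)}\}$ is maximised under $w_1=1$ by $w=1^{(m)}$, with value $1$.

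The main obstacle I anticipate is the bookkeeping for non-generic weights: the product structure $F_\I=\Pi\times\Pi^{\pm}$ and the list of active atoms are most transparent when $w_1>\dots>w_k>w_{k+1}\ge\dots$, but need care when $w_k=w_{k+1}$ (which $k$ values occupy the first block), when $w$ has interior repetitions ($\Pi$ or $\Pi^{\pm}$ drop dimension), or when $w$ has trailing zeros (then $M$ may fail to be a pattern at all). I would handle these either by re-deriving $F_\I$ directly from $\partial\poly{x}=\mathrm{conv}\{v_i:i\in\I_x\}$ (Section \ref{sec:pattern}) in each regime, or --- since $h$ and $w_{(k+1,m)}$ are continuous in $w$ and $\tau_w(M)$ is an infimum of continuous functionals over the closed set of weights realising $M$ --- by a limiting argument reducing to the generic case.
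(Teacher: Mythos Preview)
The paper states Lemma \ref{lem:SLOPEtuning} without proof (see the sentence ``We state the following results without proof'' immediately preceding Lemma \ref{lem:D1} in Appendix \ref{app:slope}), so there is no proof in the paper to compare against.

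Your argument is correct. The key steps---the product decomposition $F_\I=\Pi\times\Pi^{\pm}$ of the face into an unsigned permutohedron on the first $k$ coordinates and a signed permutohedron (dual SLOPE ball) on the last $m-k$, the identification of $f_\I$ with the pair of centroids, the reduction of $\tau_w(M)$ to the minimum of two block $\ell_\infty$-inradii, the evaluation of the $\Pi^{\pm}$ inradius as $w_{(k+1,m)}$ via monotonicity of prefix averages, the Rado/submodular facet description of $\Pi$ combined with the linear program $\max\{\sum_{j\in A}y_j:\|y\|_\infty\le t,\ \sum y_j=0\}=\min(|A|,k-|A|)\,t$, and the monotonicity argument localising the minimum at $s\in\{\lfloor k/2\rfloor,\lceil k/2\rceil\}$---are all sound, and the three optimisation cases are handled cleanly. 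The identity $\cnorm{Dz}=\|z\|_\infty$ you use is indeed correct since the duplication matrix $D$ only repeats each entry of $z$ twice and inserts zeros on the diagonal positions.

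Two minor points worth tightening: (i) the equality $\mathrm{aff}(F_\I)=f_\I+\SS_\I^\bot$ and hence $B^\ast\cap\mathrm{aff}(F_\I)=F_\I$ requires $\dim F_\I=m-1$, i.e.\ generic weights, so your closing remark about handling ties in $w$ by continuity is not cosmetic but essential; (ii) in the $\Pi^{\pm}$ block you implicitly use $w_{k+1}>0$ for $M=(1^{(k)},0^{(m-k)})$ to be a valid SLOPE pattern at all---otherwise the trailing zeros are not distinguished from the cluster and the face description changes. Both are covered by your final paragraph, but they deserve to be stated as standing hypotheses rather than afterthoughts.
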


 Assume that $\mathcal{M} = \{M\colon \|M\|_\infty =1\}$, which corresponds to all patterns with a single non-zero cluster. Then, with $m=2^k+i$, i.e. $k=\lfloor \log_2(m)\rfloor$, we can show that
	\[
	\max_{w\colon w_1=1} \min_{M\in\mathcal{M}} \tau_w(M) = \frac{2^k}{2^k(k+2)+i} \approx \frac{1}{\log_2(m)+2}.
	\]
	If $m=2^k$, then this is maximized at 
	\[
	w_{j}^\ast = \frac{\log_2(m)-\lfloor\log_2(j-1)\rfloor}{\log_2(m)+2},\qquad j=2,\ldots,m.
	\]
	We see that incorporating prior information about the pattern can substantially increase the guaranteed recovery threshold. 

\section{Mahalanobis norm}\label{app:Maha}
There is one particularly interesting choice of $\cnorm{\cdot}$.
We note that the matrix $\Q$ is the orthogonal projection onto the space spanned by the columns  of $\Gamma^\ast \PM$, when the inner product on $\R^{p^2}$ is given by
\[
\langle x, y \rangle_{\Gamma^\ast} = x^\top (\Gamma^\ast)^{-1} y.
\]
Let $\cnorm{x}= \sqrt{\langle x, x \rangle_{\Gamma^\ast}}$, $x\in\R^{p^2}$,  be the associated norm, known as the Mahalanobis norm. Thus, one has 
\[
\copnorm{\Q} = \copnorm{I_{p^2}-Q} = 1. 
\]
We have for $X\in\mathrm{Mat}(p\times p)$,
\[
\| \v{X}\|_{\Gamma^\ast}=\sqrt{\tr{X^\top K^\ast X K^\ast}} = \| (K^\ast)^{1/2}X (K^\ast)^{1/2}\|_F,
\]
where $\|\cdot\|_F$ denotes the Frobenius norm.
One can show that for a linear operator $A$ acting on $\R^{p^2}$ the induced operator norm is given by
$\opnorm{A}_{\Gamma} = \opnorm{\Gamma^{-1/2} A \Gamma^{1/2}}_2$, where $\opnorm{\cdot}_2$ is the spectral norm. 
Since the spectral norm is bounded above by the Frobenius norm, it follows that 
\begin{align*}
	\opnorm{\Sigma^\ast\Delta\otimes I_p}_{\Gamma^\ast} =  \opnorm{\Sigma^{1/2}\Delta \Sigma^{1/2}}_2 \le \|\Sigma^{1/2}\Delta \Sigma^{1/2}\|_F  =  \|\Gamma^\ast\,\v{\Delta}\|_{\Gamma^\ast},
\end{align*}
thus, we have $\cH=1$. 

Thus, Theorem \ref{cor:main} holds with 
	\[
	M = \frac{ \alpha \tau_\diamond}{ c_\diamond+\alpha\tau_\diamond}\quad\mbox{and}\quad
	\lambda=\frac{r }{ c_\diamond +\alpha\tau_\diamond}
	\]
Moreover, the norms of deviations have particularly nice expressions:  
\begin{align*}
\cnorm{\v{\hat{\Sigma}-\Sigma^\ast}}  & = \| (K^\ast)^{1/2}(\hat{\Sigma}-\Sigma^\ast) (K^\ast)^{1/2}\|_F,\\
\cnorm{\Gamma^\ast\v{\hat{K}-K^\ast}} & =\| (\Sigma^\ast)^{1/2}(\hat{K}-K^\ast) (\Sigma^\ast)^{1/2}\|_F.
\end{align*}

\end{document}